\font\bbbld=msbm10 scaled\magstephalf
\newcommand{\bM}{\bar{M}}
\newcommand{\bfR}{\hbox{\bbbld R}}
\newcommand{\tr}{\mbox{tr}}
\newcommand{\tF}{\tilde{F}}
\newcommand{\tU}{\tilde{U}}
\newcommand{\tm}{\tilde{m}}
\newcommand{\ve}{{\bf e}}
\newcommand{\ol}{\overline}
\newcommand{\ul}{\underline}
\newtheorem{theorem}{Theorem}[section]
\newtheorem{lemma}[theorem]{Lemma}
\newtheorem{proposition}[theorem]{Proposition}
\newtheorem{corollary}[theorem]{Corollary}
 \theoremstyle{definition}
\theoremstyle{remark}
\newtheorem{remark}[theorem]{Remark}
\numberwithin{equation}{section}
\begin{document}

\title[fully nonlinear parabolic equations]
{
On Estimates for Fully Nonlinear Parabolic Equations
on Riemannian Manifolds
}
\author{Bo Guan}
\address{Department of Mathematics, Ohio State University,
               Columbus, OH 43210, USA}
\email{guan@math.osu.edu}
\thanks{The first and second authors were supported in part by NSF grants and a scholarship from China Scholarship Council, respectively.}
\author{Shujun Shi}
\address{School of Mathematical Sciences, Harbin Normal University, Harbin 150025, China}
\email{shjshi@163.com}
\author{Zhenan Sui}
\address{Department of Mathematics, Ohio State University,
               Columbus, OH 43210, USA}
\email{sui@math.osu.edu}

\begin{abstract}
In this paper we present some new ideas to derive {\em a priori}
second order estiamtes for a wide class of fully nonlinear parabolic equations. 
Our methods, which produce new existence results for the
initial-boundary value problems in $\bfR^n$, are powerful enough 
to work in general Riemannian manifolds.

{\em Mathematical Subject Classification (2010):}
  35K10, 35K55, 58J35, 35B45.

{\em Keywords:} Fully nonlinear parabolic equations; {\em a priori} estimates;
subsolutions; concavity.

\end{abstract}

\maketitle

\section{Introduction}
\label{3I-I}

\medskip

In this paper we are concerned with 
deriving {\em a priori} second order estimates for 
fully nonlinear parabolic equations on Riemannian manifolds. 
Let $M^n$ be a compact Riemannian manifold of dimension $n \geq 2$
with smooth boundary $\partial M$ which may be empty ($M$ is closed). 
Let $\chi$ be a smooth $(0, 2)$ tensor on 
$\bM = M \cup \partial M$ and $f$ a smooth symmetric function of $n$ 
variables. 
We consider the fully nonlinear parabolic equation
\begin{equation} 
\label{eq1-1}
f (\lambda(\nabla^2 u + \chi)) = e^{u_t + \psi}
\;\; \mbox{in $M \times \{t > 0\}$},
\end{equation}
where $\nabla^2 u$ denotes the spatial 
Hessian of $u$, $u_t = \partial u/\partial t$, 
and $\lambda (A) = (\lambda_1, \ldots, \lambda_n)$ 
will be the eigenvalues of a $(0,2)$ tensor $A$; 
throughout the paper we shall use $\nabla$ to denote the Levi-Civita 
connection of $(M^n g)$, and assume 
$\psi \in C^{\infty} (\bM \times \{t \geq 0\})$.

The corresponding ellitpic equations were first studied by 
Caffarelli, Nirenberg and Spruck~\cite{CNS3} in $\bfR^n$, as well as in 
 \cite{ChouWang01},\cite{Guan94}, \cite{Guan99a}, \cite{Guan14}, \cite{Guan}, 
\cite{GJ}, 
\cite{ITW04}, \cite{LiYY90},
\cite{Trudinger95} and \cite{Urbas02} etc.
Following \cite{CNS3}, 
we assume $f$ to be defined in an open symmetric convex 
cone $\Gamma \subset \bfR^n$ with vertex at origin, 
$\Gamma_n := \{\lambda \in \bfR^n: \lambda_i > 0, 
\; \forall \, 1 \leq i \leq n\} \subseteq \Gamma$,
and to satisfy the
fundamental structure conditions
which have become standard in the literature:
\begin{equation}
\label{eq1-3} 
f_i = f_{\lambda_i} \equiv \frac{\partial f}{\partial
\lambda_i} > 0 \;\; \mbox{in $\Gamma$}, \;\; 1 \leq i \leq n,
\end{equation}
and 
\begin{equation}
\label{eq1-4} 
\mbox{$f$ is a concave function in $\Gamma$}.
\end{equation}

Equation~\eqref{eq1-1} is parabolic for a solution $u$
with $\lambda [u] :=\lambda (\nabla^2 u + \chi) \in \Gamma$ 
for $x \in M$ and $t > 0$ (see \cite{CNS3}); 
we shall call such functions {\em admissible}. It is uniformally parabolic 
if $\lambda [u]$ falls in a compact subset of $\Gamma$ and, on the
other hand, may become degenerate 
if $\lambda [u] \in \bar{\Gamma} = \Gamma \cup \partial \Gamma$.
To prevent the degeneracy we shall need the following condition
\begin{equation} 
\label{eq1-5}
\sup_{\partial \Gamma} f := \sup_{\lambda_0 \in \partial \Gamma} \lim_{\lambda \rightarrow \lambda_0} f (\lambda) \leq 0.
\end{equation}
In addition, we shall assume that $f$ is unbounded from above. 
 In particular,
\begin{equation} 
\label{eq1-0}
\lim\limits_{R \rightarrow \infty} f (R {\bf 1}) = \infty.
\end{equation}
where and hereafter 
${\bf 1} = (1, \ldots, 1)$.

Throughout the paper,  let $\varphi^b \in C^{\infty} (\bM)$ with 
\begin{equation}
\label{eq1-31} 
\lambda [\varphi^b] \in \Gamma, \;\; f (\lambda [\varphi^b]) > 0 \;\; \mbox{in $\bM$} 
\end{equation}
and,  when $\partial M \neq \emptyset$, 
$\varphi^s \in C^{\infty} (\partial M \times \{t \geq 0\})$. 
By the short time existence theorem, 
there exists a unique admissible solution 
$u \in C^{\infty} (\bM \times (0, t_0]) \cap  C^0 (M \times [0, t_0])$, for some $t_0 > 0$, of
equation~\eqref{eq1-1} satisfying the initial boundary value conditions
\begin{equation}
\label{eq1-1IB} 
u|_{t=0} = \varphi^b \;\; \mbox{in $\bM$}, \;\; 
u = \varphi^s \;\; \mbox{on $\partial M \times \{t > 0\}$}.
\end{equation}
Moreover, $u \in C^{\infty} (\bM \times [0, t_0])$ if 
the following compatibility conditions are satisfied
\begin{equation} 
\label{eq1-8}
  f (\lambda [\varphi^b]) = e^{\varphi^s_t + \psi}, \;\;
   \varphi^s = \varphi^b
 \;\; \mbox{on $\partial M \times \{t = 0\}$}.
\end{equation}

Our primary goal in this paper is to establish second order estimates
for admissible solutions of the initial-boundary value problem~\eqref{eq1-1} 
and \eqref{eq1-1IB}. 
Without loss of generality, we may assume \eqref{eq1-8} is satisfied.
For we only have to consider a new initial time, say
$t = t_0/2$ in place of $t = 0$, if necessary. 

For $T > 0$ let 
\[ M_T = M \times (0, T], \;\; \bM_T = \bM \times (0, T] \]
and let $\partial M_T := \partial_s M_T \cup \partial_b M_T$ be 
the parabolic boundary of $M_T$
where 
\[ \partial_s M_T = \partial M \times [0, T), \;\; 
    \partial_b M_T = \bM \times \{ t=0\}. \]
So $\partial M_T = \partial_b M_T$ when $M$ is closed.
Let 
$u \in C^{4, 2} (M_T) \cap C^{2, 1} (\ol{M_T})$ 
be an admissible solution of the problem~\eqref{eq1-1} and 
\eqref{eq1-1IB}. 
We wish to establish the {\em a priori} estimate
\begin{equation}
\label{eq1-C2} 
|\nabla ^2 u| \leq C \;\; \mbox{in $\ol{M_T}$}.
\end{equation}

As our first main result in this paper we derive \eqref{eq1-C2}
assuming the existence of an admissible subsolution. 

\begin{theorem}
\label{thm2} 
In addition to conditions \eqref{eq1-3}-\eqref{eq1-0}, suppose that 
there exists an admissible
subsolution $\ul{u}\in C^{2,1}(\ol {M_T})$ satisfying
\begin{equation}
\label{eq1-11}
\begin{aligned}
  f(\lambda [\ul u]) \geq  e^{\ul u_t + \psi}
                                              \;\; \mbox{in $M_T$}  \\
 \end{aligned} 
\end{equation}
and the initial-boundary conditions
\begin{equation} 
\label{eq1-11b}
\left\{
\begin{aligned}
  \ul u \leq \,& \varphi^b  \;\; \mbox{on $\partial_b M_T$}, \\
      \ul u = \,& \varphi^s \;\; \mbox{on $\partial_s M_T$} .
 \end{aligned} \right.
\end{equation}
Then
\begin{equation}
\label{eq1-12} 
\sup_{{M_T}} |\nabla^2 u| \leq
 C_1 + C_1 \max_{\partial M_T}|\nabla^2 u|
\end{equation}
In particular, \eqref{eq1-C2} holds when $M$ is closed. 

Suppose moreover that for any $b > a> 0$, there exists
$K_1 \geq 0$ such that 
\begin{equation}
\label{eq1-13} 
\sum f_i (\lambda) \lambda_i \geq  - K_1 \Big(1 +
\sum f_i\Big)  \;\; \mbox{in $\Gamma^{[a, b]} := 
\{\lambda \in \Gamma:  a \leq f (\lambda) \leq b\}$}.
\end{equation}
 Then 
\begin{equation}
\label{eq1-14} 
\max_{\partial M_T}|\nabla^2 u| \leq C_2.
\end{equation}
 \end{theorem}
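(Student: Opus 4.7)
\textbf{Global reduction \eqref{eq1-12}.} I would apply the parabolic maximum principle to a test function
\[
W(x,t) := \log \lambda_1(\nabla^2 u + \chi)(x,t) + \phi(|\nabla u|^2) + a(\ul{u} - u),
\]
where $\lambda_1$ is the largest eigenvalue of the indicated tensor, $\phi(s) = -\tfrac12 \log(2K-s)$ for $K$ larger than an a priori $C^1$ bound, and $a > 0$ is to be chosen. If the maximum of $W$ on $\ol{M_T}$ is attained on $\partial M_T$, then \eqref{eq1-12} follows immediately. Otherwise at an interior maximum $(x_0,t_0)$, I would choose an orthonormal frame diagonalizing $\nabla^2 u + \chi$ and perturb $\chi$ slightly to make $\lambda_1$ smooth and simple near $(x_0,t_0)$. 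Writing $F(A) := f(\lambda(A))$, the conditions $W_t \geq 0$, $\nabla W = 0$, and $F^{ij}\nabla_{ij} W \leq 0$, combined with the twice-differentiated equation in the first eigendirection, the standard concavity inequality
\[
F^{ij,kl} B_{ij} B_{kl} \leq 2\sum_{i>1} \frac{f_1 - f_i}{\lambda_1 - \lambda_i} |B_{1i}|^2,
\]
and Riemannian commutators (bringing in curvature and lower-order terms), yield the key differential inequality at $(x_0,t_0)$. The subsolution contributes $a F^{ij}(\ul u_{ij} - u_{ij})$, which by concavity of $F$ together with \eqref{eq1-11} is bounded below by a positive multiple of $a \sum f_i$ up to known terms; for $a$ sufficiently large this absorbs the curvature, gradient, and third-order contributions, forcing $\lambda_1(x_0,t_0) \leq C$.

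\emph{Tangential and mixed boundary estimates.} On $\partial_b M_T$ the Hessian is controlled directly by $\varphi^b$. On $\partial_s M_T$ I would fix $(x_0,t_0)$ and choose local coordinates so that $\partial_n$ is the inward conormal and $\partial_\alpha$ ($\alpha<n$) are tangent to $\partial M$. Purely tangential second derivatives $u_{\alpha\beta}(x_0,t_0)$ are determined by $\varphi^s$. For the mixed $u_{\alpha n}(x_0,t_0)$, I would use a barrier
\[
v := A(\ul u - u) + B d - N d^2 \pm T_\alpha (u - \varphi^s)
\]
in a tubular neighborhood of $x_0$ in $M$, where $d$ is the geodesic distance to $\partial M$ and $T_\alpha$ is a vector field extending $\partial_\alpha$. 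Applying $\cL := \partial_t - F^{ij}\nabla_{ij}$ and using \eqref{eq1-11} together with concavity of $F$, one checks $\cL v \geq 0$ for $A, B, N$ chosen in the correct order, while $v \leq 0$ on the parabolic boundary of the neighborhood; the maximum principle then gives $v \leq 0$ throughout, and $\partial_n v(x_0,t_0) \geq 0$ delivers the bound on $u_{\alpha n}$.

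\emph{Double-normal estimate and the role of \eqref{eq1-13}.} The main obstacle is bounding $u_{nn}(x_0,t_0)$. I would argue by contradiction: suppose $u_{nn}$ is unbounded along a sequence. The tangential block $\tilde U := \bigl((\nabla^2 u + \chi)_{\alpha\beta}\bigr)_{\alpha,\beta<n}$ has eigenvalues already controlled by the previous step, so as $u_{nn} \to \infty$ one eigenvalue of $\nabla^2 u + \chi$ diverges while the remaining $n-1$ approach those of $\tilde U$. Combining \eqref{eq1-5}, \eqref{eq1-0} and the bound $f(\lambda[u]) \leq C$ forces these limit eigenvalues into a fixed compact subset of the reduced cone $\Gamma' := \{\mu \in \bfR^{n-1} : (\mu, R) \in \Gamma \text{ for some } R\}$. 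Hypothesis \eqref{eq1-13} now provides a universal lower bound on $\sum f_i \lambda_i$ along such sequences; rotating the frame in the spirit of Caffarelli--Nirenberg--Spruck and exploiting concavity of $f$ in the normal variable converts this into a quantitative a priori bound on $u_{nn}$, which proves \eqref{eq1-14}. The delicate point is to keep all constants dependent only on a priori data and not on $u_{nn}$ itself, which is precisely what \eqref{eq1-13} is designed to guarantee.
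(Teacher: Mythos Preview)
Your sketch contains a genuine gap at the heart of the global estimate. You assert that ``by concavity of $F$ together with \eqref{eq1-11}, $aF^{ij}(\ul u_{ij}-u_{ij})$ is bounded below by a positive multiple of $a\sum f_i$ up to known terms.'' But concavity only yields
\[
F^{ij}(\ul U_{ij}-U_{ij}) \geq f(\lambda[\ul u]) - f(\lambda[u]) \geq \ul u_t - u_t,
\]
so $\mathcal{L}(\ul u - u) \geq 0$, not $\geq \varepsilon\bigl(1+\sum f_i\bigr)$. On a general Riemannian manifold the commutator terms from \eqref{eq2-6} produce contributions of size $CU_{11}\sum F^{ii}$, and a merely nonnegative subsolution term cannot absorb them. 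This is precisely the obstacle the paper isolates: its Lemma~\ref{ma-lemma-C10} shows that when the unit normals $\nu_{\lambda[\ul u]}$ and $\nu_{\lambda[u]}$ to the level sets of $f$ differ by at least a fixed amount $\beta$, concavity can be upgraded to $\sum f_i(\mu_i-\lambda_i)\geq f(\mu)-f(\lambda)+\varepsilon\bigl(1+\sum f_i\bigr)$. The proof then splits into two cases: if $|\nu_\mu-\nu_\lambda|\geq\beta$ the lemma applies directly; if $|\nu_\mu-\nu_\lambda|<\beta$ then each $F^{ii}\geq\frac{\beta}{\sqrt n}\sum F^{kk}$, and one exploits the quadratic term $F^{ii}U_{ii}^2$ together with assumption \eqref{eq1-0} (via \eqref{eq2-24}) to close the estimate. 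Your test function also differs: the paper uses $\phi(1+|\nabla(u-\ul u)|^2)$ rather than $\phi(|\nabla u|^2)$, which interacts better with the $J/K$ index decomposition needed to handle the third-order terms.

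The same gap recurs in your mixed-normal barrier: writing $v=A(\ul u-u)+Bd-Nd^2\pm T_\alpha(u-\varphi^s)$ and invoking concavity will again only give $\mathcal{L}(\ul u-u)\geq 0$, insufficient to dominate $|\mathcal{L}\nabla_\alpha u|\leq C(1+\sum f_i|\lambda_i|+\sum f_i)$. The paper's barrier $\varPsi$ in \eqref{eq3-3} contains the additional term $-A_3\sum_{l<n}|\nabla_l(u-\varphi)|^2$, whose $\mathcal{L}$ contributes $\sum_{i\neq r}f_i\lambda_i^2$ via \eqref{eq3-7}; this, combined with the same $\nu_\mu$-vs-$\nu_\lambda$ dichotomy and with \eqref{eq1-13} through inequality \eqref{eq3-19}, is what actually produces \eqref{eq3-5}. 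Your double-normal argument is also only a heuristic: the paper follows Trudinger's method, proving $f(\lambda'(\tilde U),R)\geq f(\lambda(U))+c_0$ for large $R$ by a further barrier $\varPhi$ built from the linearization $\tF_0^{\alpha\beta}$; the contradiction-with-blowup picture you describe does not by itself yield a quantitative bound, and the specific role you assign to \eqref{eq1-13} does not match how it is actually used (it enters only in bounding $\sum f_i|\lambda_i|$ in the barrier inequality, not in the limiting-eigenvalue argument).
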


\begin{remark}
\label{remark-1.1}
In Theorem~\ref{thm2} and the rest of  this paper, unless otherwise 
indicated the constant $C_1$ in \eqref{eq1-12} will depend on 
\begin{equation} 
\label{eq1-81}
 |u|_{C^1 (\ol{M_T})}, \; |\psi|_{C^{2,1} (\ol{M_T})}, \;
|\ul u|_{C^{2,1} (\ol{M_T})}, \;
\inf_{{M_T}} \mbox{dist} (\lambda [\ul u], \partial \Gamma), 
\end{equation}
as well as geometric quantities of $M$, while  
$C_2$ in \eqref{eq1-14} will depend in addtion on 
$|\varphi^b|_{C^2 (\bM)}$,
$|\varphi^s|_{C^{4,1} (\partial_s M_T)}$ and geometric quantities of 
$\partial M$.
\end{remark}
 
\begin{remark}
The proof of \eqref{eq1-12} does not need assumptions \eqref{eq1-5} and 
\eqref{eq1-11b}. This will be clear in Section~\ref{3I-C}. 
For the boundary estimate \eqref{eq1-14}, we need
condition~\eqref{eq1-5} to prevent equation~\eqref{eq1-1} from 
being degenerate along the boundary.  
It would be interesting to establish \eqref{eq1-14} in the degenerate case. 
We also expect Theorem~\ref{thm2} to hold without 
conditions~\eqref{eq1-0} and \eqref{eq1-13} which are fairly mild and technical 
in nature. When $M$ is a bounded smooth domain in $\bfR^n$
these assumptions can be removed. 
\end{remark}

\begin{remark}
\label{remark-1.3}
If we replace \eqref{eq1-0} by the assumption
\begin{equation}
\label{eq1-01}
\lim_{ |\lambda|\to \infty}
|\lambda|^2\sum f_i = \infty,
\end{equation}
then $C_1$ in \eqref{eq1-12} can be chosen independent of 
$|u_t|_{C^0(\ol{M_T})}$; see Remark \ref{eq1-01a}.
\end{remark}

Our next result concerns \eqref{eq1-12}  under a new condition
 which is
optimal in many cases and is in general weaker than the subsolution assumption in Theorem~\ref{thm2}, especially on closed manifolds.  
It is motivated by recent work in \cite{Guan14}.

For $\sigma \in \bfR$ define
\[ \Sigma^{\sigma} := \{(\lambda, z)  \in \Gamma \times \bfR: 
                                f (\lambda) > e^{z + \sigma}\}
\]
and let $\partial \Sigma^{\sigma}$ be the boundary of $\Sigma^{\sigma}$.
By \eqref{eq1-3} and \eqref{eq1-4}, 
$\partial \Sigma^{\sigma}$ is a smooth convex hypersurface 
in $\Gamma \times \bfR$.  
For $\hat{\lambda} = (\lambda, z) \in \partial \Sigma^{\sigma}$
let 
\[ \nu_{\hat{\lambda}} 
   = \frac{(Df (\lambda), -f (\lambda))}{\sqrt{f (\lambda)^2 + |Df (\lambda)|^2}} \]
denote the unit normal vector to $\partial \Sigma^{\sigma}$ at $\hat{\lambda}$. 
Finally, for $\hat \mu \in \Gamma \times \bfR$ let
\[ \hat S^{\sigma}_{\hat{\mu}} 
       := \{\hat{\lambda} \in \partial \Sigma^{\sigma}: 
(\hat{\mu} - \hat{\lambda}) \cdot \nu_{\hat{\lambda}} \leq 0\}. \]

\begin{theorem}
\label{thm3} 
Under conditions \eqref{eq1-3} and \eqref{eq1-4}, 
the estimate \eqref{eq1-12} holds provided that
there exists an admissible function 
$\ul{u}\in C^{2,1}(\ol {M_T})$ satisfying 
\begin{equation} 
\label{eq1-11a}
 \hat S^{\psi (x, t)}_{\hat{\mu}} \cap 
  \Gamma \times [a, b] 
\;\; \mbox{is compact}, \; \forall \, (x, t) \in M_T,  \; 
   \forall \, [a, b] \subset \bfR 
\end{equation}
 where $\hat{\mu} = (\lambda [\ul u (x, t)], \ul u_t (x, t))$.
\end{theorem}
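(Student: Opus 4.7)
The plan is to adapt the maximum-principle scheme underlying Theorem~\ref{thm2}, replacing the role of the subsolution inequality \eqref{eq1-11} by the geometric information encoded in \eqref{eq1-11a}. Consider the test function
\[ W = \log \lambda_1 + \phi(|\nabla u|^2) + A(u - \ul u) \]
on $\ol{M_T}$, where $\lambda_1$ is the largest eigenvalue of $\nabla^2 u + \chi$, $\phi$ is an auxiliary function of the standard type, and $A > 0$ is a large constant to be chosen. Suppose $W$ attains its maximum at an interior point $(x_0, t_0)$; after a boundary reduction and choice of orthonormal frame diagonalizing $\nabla^2 u + \chi$ with $\lambda_1 \geq \cdots \geq \lambda_n$, work with the linearized parabolic operator $L := \sum_i f_i \nabla_{ii} - \partial_t$.

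Applying $L$ to $W$ produces, after standard computations using concavity \eqref{eq1-4} and Codazzi/commutator identities on $M$, three kinds of contributions: (i) favorable third-derivative terms absorbed by the usual gradient-square mechanism through $L\phi(|\nabla u|^2)$; (ii) curvature-type corrections bounded by $C(1 + \sum_i f_i)\lambda_1$; and (iii) the central contribution
\[ A \cdot L(u - \ul u) = A \sum_i f_i (\lambda_i - \ul\lambda_i) - A(u_t - \ul u_t), \]
where $\ul\lambda_i := \lambda_i[\ul u]$ (the $\chi$ pieces cancel). In the proof of Theorem~\ref{thm2} the subsolution assumption gives this term a favorable sign; here the structural assumption \eqref{eq1-11} is not available, and the coercivity must come entirely from \eqref{eq1-11a}.

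The key step is to convert the compactness hypothesis into a quantitative coercivity for $L(u-\ul u)$ once $\lambda_1$ is large. At $(x_0,t_0)$ the point $\hat \lambda := (\lambda[u], u_t)$ lies on the smooth convex hypersurface $\partial \Sigma^{\psi(x_0,t_0)}$ with inner normal $\nu_{\hat\lambda}$ proportional to $(Df,-f)$. Choosing $[a,b]$ from the a priori $C^0$ bound on $u_t$, the compactness of $\hat S^{\psi(x_0,t_0)}_{\hat\mu} \cap (\Gamma \times [a,b])$ forces $\hat\lambda \notin \hat S^{\psi}_{\hat\mu}$ whenever $|\lambda[u]|$ exceeds some uniform radius $R$, giving the strict inequality
\[ \sum_i f_i(\ul\lambda_i - \lambda_i) - f(\lambda)(\ul u_t - u_t) > 0. \]
A convexity-plus-continuity argument, exploiting smoothness of $\partial \Sigma^{\psi(x,t)}$, compactness of $\ol{M_T}$, and the $C^1$ bound on $u$ and $\ul u$, upgrades this to a uniform quantitative estimate
\[ \sum_i f_i(\ul\lambda_i - \lambda_i) - f(\lambda)(\ul u_t - u_t) \geq \theta\Bigl(1 + \sum_i f_i\Bigr), \qquad \theta > 0, \]
valid whenever $\lambda_1 \geq R$. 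Selecting $A$ so that $A\theta$ dominates the $C(1+\sum_i f_i)\lambda_1$ curvature terms and the bounded $u_t$-contribution, we obtain $LW > 0$ at $(x_0,t_0)$, contradicting maximality and forcing $\lambda_1(x_0,t_0) \leq R$; this yields \eqref{eq1-12}.

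The principal obstacle is precisely this quantitative upgrade: turning the \emph{qualitative} compactness of $\hat S^{\psi}_{\hat\mu}$ into an estimate with the coefficient $1+\sum_i f_i$ uniformly in $(x,t)$. This is the parabolic analogue of the mechanism developed in \cite{Guan14} for elliptic equations; the new feature here is that the $z$-variable of $\partial \Sigma^{\psi}$ must be tracked via the a priori bound on $u_t$, and the convexity of $\partial \Sigma^{\psi}$ must be exploited jointly in the $\lambda$- and $z$-directions. Once this quantitative coercivity is in place, the remainder of the argument is a routine parallel to the proof of Theorem~\ref{thm2} carried out in Section~\ref{3I-C}.
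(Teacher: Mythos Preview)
Your overall strategy matches the paper's: the proof of \eqref{eq1-12} in Section~\ref{3I-C} runs through a maximum-principle argument for $\log U_{11}$ plus a gradient term plus a multiple of $\ul u - u$, and the decisive step is indeed the quantitative coercivity you describe as ``the parabolic analogue of \cite{Guan14}'' --- this is Lemma~\ref{ma-lemma-C20}, proved in the Appendix, which converts the compactness hypothesis \eqref{eq1-11a} into the uniform lower bound \eqref{eq2-41}. You have correctly identified both the main mechanism and the principal technical obstacle.

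However, the execution has errors that would prevent the argument from closing. First, the sign of the linear term in your test function is backwards: with $W = \log\lambda_1 + \phi + A(u - \ul u)$, the contribution $A\cdot L(u - \ul u)$ equals $-A$ times the quantity your coercivity estimate bounds from below by $\theta(1+\sum f_i)$, so enlarging $A$ drives $LW$ toward $-\infty$ rather than producing $LW > 0$. The paper takes $a(\ul u - u)$ in \eqref{eq2-200}, and this sign is not cosmetic. Second, you assert that the curvature corrections are of order $C(1+\sum f_i)\lambda_1$ and then propose to absorb them by $A\theta(1+\sum f_i)$ with $A$ fixed; since $\lambda_1$ is precisely what you are trying to bound, this is impossible. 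In fact the commutator identity \eqref{eq2-6}--\eqref{eq2-7}, after dividing by $U_{11}$, leaves a curvature error of order $C\sum F^{ii}$ with no $\lambda_1$ factor, and it is this bounded-coefficient term that the coercivity absorbs. Finally, there is a mismatch between your operator $L = \sum f_i\nabla_{ii} - \partial_t$ and the factor $f(\lambda)$ multiplying $(\ul u_t - u_t)$ in your coercivity inequality: the paper resolves this by passing to equation~\eqref{eq1-1a}, for which the normal to $\partial\Sigma^\sigma$ is proportional to $(Df,-1)$ and matches $L$ exactly (see the redefinition of $\Sigma^\sigma$ just before \eqref{eq2-41}).
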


By the concavity of $f$, 
if $\ul u$ is an admissible subsolution then 
$(\hat{\mu} - \hat{\lambda}) \cdot \nu_{\hat{\lambda}} \geq 0$
for any $\hat{\lambda} \in \Sigma^{\psi (x, t)}$.

\begin{remark}
\label{remark-1.10}
In Theorems~\ref{thm2} and ~\ref{thm3}, the constants $C_1$ and $C_2$  depend on 
$T$ only implicitly. For instance, if the quantities listed in \eqref{eq1-81}
are all independent of $T$, then so is $C_1$.
The independence on $T$ of the estimates is important to understnding the asymptotic 
behaviors of solutions as $t$ goes to infinity. 
If one allows $C_1$ to depend on $T$ (explicitly), \eqref{eq1-12} can be derived 
under much weaker conditions, and more easily. 
\end{remark}

\begin{theorem}
\label{thm2a} 
Under assumptions~\eqref{eq1-3}, \eqref{eq1-4} and \eqref{eq1-31}, 
\begin{equation}
\label{eq1-12a} 
|\nabla^2 u (x, t)| \leq
 C e^{B t} \Big(1 + \max_{\partial M_T}|\nabla^2 u|\Big), 
\;\; \forall \, (x, t) \in M_T
\end{equation}
where $C$ and $B$ depend on $|\nabla u|_{C^{0} (\ol{M_T})}$, 
$|\varphi^b|_{C^2 ({\bM})}$ and other known data. 
In particular, if $M$ is closed then 
$|\nabla^2 u (x, t)| \leq C e^{B t}$. 
\end{theorem}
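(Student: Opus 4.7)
The plan is to apply the parabolic maximum principle to a test function that tracks the largest eigenvalue of $U := \nabla^2 u + \chi$, specifically
\[
W(x,t,\xi) = \log U_{\xi\xi}(x,t) + A\,\varphi^b(x) - B t,
\]
where $\xi$ is a unit tangent vector at $x$ and the constants $A, B > 0$ are to be fixed from the data. Setting $F := \log f$, equation~\eqref{eq1-1} reads $F(U) = u_t + \psi$. If the maximum of $W$ over $\ol{M_T}$ and unit $\xi$ is attained on the parabolic boundary $\partial M_T$, the desired estimate \eqref{eq1-12a} follows directly from $|\varphi^b|_{C^2}$ and $\max_{\partial M_T}|\nabla^2 u|$. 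Otherwise, pick an interior maximum $(x_0,t_0)$ with $t_0 > 0$; after a standard perturbation argument to make the top eigenvalue simple, choose normal coordinates at $x_0$ diagonalizing $U(x_0,t_0)$ with $\lambda_1 \geq \cdots \geq \lambda_n$ and $\xi_0 = e_1$.

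The parabolic max conditions yield $U_{11,t} \geq B\, U_{11}$, $\nabla_i U_{11} = -A\,U_{11}\,\nabla_i\varphi^b$, and, writing $F^{ii} = f_i/f$, the $F^{ii}$-contraction of $\nabla^2 W \leq 0$ gives
\[
F^{ii}U_{11,ii} \leq A^2 U_{11} F^{ii}(\varphi^b_i)^2 - A\,U_{11}\, F^{ii}\varphi^b_{ii}.
\]
Twice-differentiating $F(U) = u_t + \psi$ in the direction $e_1$, concavity of $F$ forces the quadratic $D^2F$-term to be nonpositive, so $F^{ii}U_{ii,11} \geq \nabla_1\nabla_1(u_t + \psi) = U_{11,t} - \chi_{11,t} + \psi_{11}$. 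Commuting derivatives on $(M,g)$ via the Ricci identity produces a correction $F^{ii}(U_{11,ii} - U_{ii,11})$ bounded in absolute value by $C(1 + U_{11})\sum F^{ii}$ in terms of the sectional curvatures of $M$ and $|\nabla u|_{C^0}$. Combining these inequalities with $U_{11,t} \geq B U_{11}$,
\[
B\,U_{11} \leq C + C(1 + A + A^2)(1 + U_{11})\sum F^{ii} \,-\, A\,U_{11}\,F^{ii}\varphi^b_{ii}.
\]

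To close the estimate without \eqref{eq1-5}, \eqref{eq1-0}, \eqref{eq1-13}, or any subsolution assumption, I exploit only the admissibility of $\varphi^b$ through the matrix-concavity inequality
\[
F^{ii}(\nabla_i\nabla_i\varphi^b - u_{ii}) \geq F(U(\varphi^b)) - F(U) = \log f(\lambda[\varphi^b]) - (u_t + \psi) \geq \log c_0 - C_*,
\]
where $c_0 := \min_{\bM} f(\lambda[\varphi^b]) > 0$ by \eqref{eq1-31}. This yields the companion estimate $\sum F^{ii}\lambda_i \leq C(1 + \sum F^{ii})$ and rearranges to
\[
F^{ii}\varphi^b_{ii} \geq \sum F^{ii}\lambda_i - C\sum F^{ii} - C,
\]
which identifies $-A U_{11} F^{ii}\varphi^b_{ii}$ as the barrier term absorbing the bad $(1 + U_{11})\sum F^{ii}$ contribution once $A$ is chosen large relative to $|\varphi^b|_{C^2}$, $|\nabla\varphi^b|_{C^0}^2$, the sectional curvatures of $M$, and the bounds on $u$ and $\psi$. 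Taking $B$ thereafter sufficiently large (depending on $A$ and the same data) forces $U_{11}(x_0,t_0)$ to be bounded, so that $W(x_0,t_0)$ and hence $W$ itself on $\ol{M_T}$ is controlled, yielding \eqref{eq1-12a}.

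The main obstacle is the lack of any structural bound on $\sum F^{ii}$ and the lack of a lower bound on $\sum F^{ii}\lambda_i$ --- matrix concavity alone delivers only one-sided control. The balance must therefore be achieved through the barrier $A\varphi^b$ together with the exponential weight $e^{-Bt}$, and it is precisely the dependence of $B$ on $A$ (and hence on the data) that produces the factor $e^{Bt}$ in the final bound, distinguishing Theorem~\ref{thm2a} from the $T$-uniform estimates of Theorem~\ref{thm2}.
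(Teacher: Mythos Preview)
Your argument has a genuine gap at the barrier step. From concavity you correctly obtain
\[
F^{ii}\varphi^b_{ii} \;\geq\; \sum F^{ii}\lambda_i - C\sum F^{ii} - C,
\]
but this inequality cannot be used to absorb the curvature term $C(1+U_{11})\sum F^{ii}$. Plugging it into your main inequality gives
\[
B\,U_{11}\;\leq\; C + C(1+A+A^2)(1+U_{11})\sum F^{ii} - A\,U_{11}\sum F^{ii}\lambda_i + CA\,U_{11}\sum F^{ii} + CA\,U_{11},
\]
and now the two offending terms are \emph{worse} than before: $+CA\,U_{11}\sum F^{ii}$ has the same sign and order as the term you are trying to kill, while $-A\,U_{11}\sum F^{ii}\lambda_i$ is uncontrolled because no lower bound on $\sum F^{ii}\lambda_i$ is available (you explicitly acknowledge this in your last paragraph). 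The exponential weight contributes only $B\,U_{11}$ on the left, which cannot compete with $U_{11}\sum F^{ii}$ on the right since $\sum F^{ii}$ is not known to be bounded. In short, plain concavity applied to $\varphi^b$ gives no improvement of the form $\varepsilon\sum F^{ii}$ over the trivial bound, so the loop does not close.

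The paper's proof of this theorem does not use the simple test function $\log U_{11}+A\varphi^b-Bt$. It keeps the full test function
\[
\eta=\phi\bigl(1+|\nabla(u-\ul u)|^2\bigr)+a(\ul u-u-\delta t),\qquad \phi(s)=-\log(1-bs^2),
\]
with $\ul u=\varphi^b+At$ (a strict subsolution by \eqref{eq1-31}) and $\delta=1$. The gradient term $\phi(1+|\nabla(u-\ul u)|^2)$ is essential: it produces the quadratic term $\frac{b}{2}F^{ii}U_{ii}^2$ in Proposition~\ref{eq2-149}. The proof then splits according to whether $|\nu_{\lambda[\ul u]}-\nu_{\lambda[u]}|$ is small or large. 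In the ``large'' case, the key Lemma~\ref{ma-lemma-C10} strengthens concavity to yield the extra $\varepsilon(1+\sum f_i)$ that your barrier is missing. In the ``small'' case one has $F^{ii}\geq \frac{\beta}{\sqrt n}\sum F^{kk}$, and the quadratic term $F^{ii}U_{ii}^2$ together with $a\delta=a$ (this is precisely where the $t$-dependent weight enters) gives \eqref{eq2-142} and hence the bound. Neither ingredient---the $F^{ii}U_{ii}^2$ term from the gradient part of $\eta$, nor Lemma~\ref{ma-lemma-C10}---is present in your scheme, and at least one of them is needed.
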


Note that by \eqref{eq1-31}  the function 
\[ \ul u := \varphi^b +  t \min_{\bM} \{\log f (\lambda [\varphi^b]) - \psi\} \]
is admissible and satisfies \eqref{eq1-11}. 

An immediate consequence of Theorem~\ref{thm2a} is the following 
characterization of finite time blow-up solutions on closed manifolds.

\begin{corollary}
\label{thm2ac1} 
Assume $M$ is closed and $f$ satisfies \eqref{eq1-3}-\eqref{eq1-5}.
Then equation~\eqref{eq1-1} admits a unique admissible solution
$u \in C^{\infty} (M \times \bfR^+)$ with 
initial value function $\varphi^b$ satisfying \eqref{eq1-31}, 
provided that the {a priori} gradient estimate holds
\begin{equation}
\label{eq1-12g} 
\sup_{M_T} |\nabla u| \leq C,  \;\; \forall \, T > 0
\end{equation}
where $C$ may depend on $T$. 
In other words, if $u$ has a finite time blow-up at $T < \infty$,
then 
\[ \lim_{t \rightarrow T^-} \max_{x \in M} |\nabla u (x, t)| =  \infty. \]
\end{corollary}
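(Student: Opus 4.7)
The plan is to argue by the standard continuation method. Short-time existence yields a unique smooth admissible solution $u$ on a maximal interval $[0, T_{\max})$, and I want to show $T_{\max} = \infty$ under the standing hypothesis \eqref{eq1-12g}. Suppose instead that $T_{\max} < \infty$; then \eqref{eq1-12g} supplies a uniform bound $|\nabla u| \leq C$ on $M \times [0, T_{\max})$.

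The first step upgrades the gradient bound to $C^0$ bounds on $u$ and $u_t$. For $|u|_{C^0}$ I would compare against the two barriers $\ul u := \varphi^b + c_- t$ and $\bar u := \varphi^b + c_+ t$, where
\begin{equation*}
c_- := \min_{\bM \times [0, T_{\max}]}\{\log f(\lambda[\varphi^b]) - \psi\}, \qquad
c_+ := \max_{\bM \times [0, T_{\max}]}\{\log f(\lambda[\varphi^b]) - \psi\};
\end{equation*}
by \eqref{eq1-31} both are admissible, and $\ul u$ (respectively $\bar u$) is an admissible subsolution (respectively supersolution) of \eqref{eq1-1} agreeing with $u$ at $t = 0$. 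The comparison principle for concave parabolic equations on the closed manifold $M$ then yields $\ul u \leq u \leq \bar u$, hence $|u|_{C^0} \leq C(T_{\max})$. For $u_t$, differentiating \eqref{eq1-1} in $t$ produces a linear parabolic equation for $u_t$ with positive-definite elliptic part and source term $e^{u_t + \psi}\psi_t$; the maximum principle then yields $|u_t|_{C^0} \leq |u_t(\cdot, 0)|_{C^0} + T_{\max}|\psi_t|_{C^0}$.

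With $|u|_{C^0}$, $|u_t|_{C^0}$, and $|\nabla u|_{C^0}$ all controlled on $[0, T_{\max})$, Theorem~\ref{thm2a} (which on closed $M$ has no boundary contribution) delivers $|\nabla^2 u| \leq C e^{B T_{\max}}$. Moreover, $f(\lambda[u]) = e^{u_t + \psi}$ is now trapped between two positive constants while $|\lambda[u]|$ is bounded, so condition \eqref{eq1-5} forces $\lambda[u]$ to remain in a compact subset of $\Gamma$. Hence \eqref{eq1-1} is uniformly parabolic with concave nonlinearity on $M \times [0, T_{\max})$. The Evans--Krylov theorem then supplies a uniform $C^{2, \alpha}$ estimate in space-time, and standard linear Schauder bootstrapping gives uniform $C^{k, \alpha}$ bounds of every order.

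These estimates permit passage to a smooth limit at $t = T_{\max}$; reapplying the short-time existence theorem with $u(\cdot, T_{\max})$ as the new initial datum extends $u$ past $T_{\max}$, contradicting maximality. Uniqueness on any interval where a solution exists is a further application of the same comparison principle. The main obstacle, as anticipated, is the $u_t$ estimate: the constants in Theorem~\ref{thm2a} are allowed to depend on $|u_t|_{C^0}$ (compare Remark~\ref{remark-1.3}), so the $C^2$ bootstrap cannot close without independently controlling $u_t$ via the linearized equation. Once that is in hand, the remaining steps are classical.
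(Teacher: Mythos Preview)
Your argument is correct and follows the same route the paper takes: the corollary is stated as an immediate consequence of Theorem~\ref{thm2a}, and the supporting $C^0$ and $u_t$ estimates you supply are precisely those the paper assembles (the subsolution $\ul u=\varphi^b+c_-t$ is noted just before the corollary, and your $u_t$ bound is the closed-manifold case of Lemma~\ref{lemma-E10}). One small remark: the constants in Theorem~\ref{thm2a} as stated do not actually depend on $|u_t|_{C^0}$ (Remark~\ref{remark-1.3} concerns Theorem~\ref{thm2}), but your $u_t$ bound is still needed exactly where you use it later, to force $\lambda[u]$ into a compact subset of $\Gamma$ via \eqref{eq1-5} and obtain uniform parabolicity.
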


So the long time existence of solutions in $0 \leq t < \infty$
reduces to establishing 
gradient estimate \eqref{eq1-12g}. 
This is also true 
when $\partial M \neq \emptyset$. Using Theorem~\ref{thm2} 
we can prove the following existence results.

\begin{theorem}
\label{thm1}
 Assume \eqref{eq1-3}-\eqref{eq1-31},   \eqref{eq1-13}, 
and \eqref{eq1-11}-\eqref{eq1-11b} hold for $T \in (0, \infty]$.
 Then there exists a unique admissible solution
$u \in C^{\infty} (\bM_T) \cap C^0 (\ol {M_T})$
of equation \eqref{eq1-1} satsfying \eqref{eq1-1IB}, provided that 
any one of the
following conditions holds: ({\sl i}) $\Gamma = \Gamma_n$; 
({\sl ii}) $(M, g)$ has nonnegative sectional curvature; 
({\sl iii}) there is $\delta_0 > 0$ such that
\begin{equation}
\label{eq1-20}
  f_j \geq \delta_0 \sum f_i \;\; \mbox{if $\lambda_j < 0$,
on $\partial \Gamma^{\sigma} \;\; \forall \; \sigma > 0$};
\end{equation} 
and ({\sl iv}) $K_1 = 0$ in \eqref{eq1-13} and $\nabla^2 w \geq \chi$
for some function $w \in C^{2}(\bM)$.
\end{theorem}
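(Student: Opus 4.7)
The plan is to apply the continuity method together with a complete set of a priori estimates, and then rely on the linear parabolic theory for the final bootstrap. Since short-time existence of an admissible solution has already been invoked in the introduction, one deforms the data through a one-parameter family $\psi_\tau$, $\tau\in[0,1]$, chosen so that $\tau=0$ admits a known admissible solution (for instance, $\ul u$ after adjusting $\psi_0$) and $\tau=1$ is the desired problem. Openness follows from the fact that the linearized operator $\partial_t - f^{ij}\nabla^2_{ij}$ is uniformly parabolic on an admissible solution and invertible with the Dirichlet initial-boundary conditions; closedness reduces to uniform $C^0$, $C^1$ and $C^2$ estimates that are independent of $\tau$.

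The $C^0$ estimate is immediate from the comparison principle: the admissible subsolution $\ul u$ together with the initial data give $\ul u \le u$ on all of $\ol{M_T}$, and a suitably large constant plus $\varphi^b,\varphi^s$ bounds $u$ from above. The $C^2$ estimate is exactly the content of Theorem~\ref{thm2}: \eqref{eq1-12} reduces the interior Hessian bound to the boundary, and under \eqref{eq1-13} the boundary estimate \eqref{eq1-14} holds. Note that these bounds also keep $\lambda[u]$ in a compact subset of $\Gamma$, since $\ul u$ bounds it away from $\partial\Gamma$ from below while the Hessian bound prevents blow-up at infinity; consequently the equation becomes uniformly parabolic on the solution.

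The genuine obstacle is the gradient estimate, and this is where the four alternatives enter. Applying the maximum principle to a test function of the form $w=\log|\nabla u|^2 + \phi(u-\ul u)$ and differentiating \eqref{eq1-1} once produces, after commuting covariant derivatives, an inequality of the schematic form
\[
0 \ge \sum f_i |\nabla^2 u \cdot e_i|^2 \cdot |\nabla u|^{-2}
 \;+\; (\text{Ricci terms}) \;-\; C\bigl(1+\sum f_i\bigr)
 \;-\; C\sum f_i\lambda_i,
\]
in which the sectional curvature term and the $\sum f_i\lambda_i$ term need to be controlled. In case~(i), $\Gamma=\Gamma_n$ forces $\lambda_i>0$ and the troublesome sum becomes automatically nonnegative; in case~(ii), the nonnegative sectional curvature hypothesis cancels the Riemannian commutator contribution; in case~(iii), the quantitative lower bound on $f_j$ at negative eigenvalue directions, combined with \eqref{eq1-13}, absorbs the indefinite term into $\sum f_i$; and in case~(iv), replacing $u$ by $u-w$ removes $\chi$ from the equation while $K_1=0$ eliminates the term linear in $\lambda_i$ altogether. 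The boundary gradient estimate is obtained by a standard barrier construction using $\ul u$ and a defining function of $\partial M$.

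With $C^0$, $C^1$ and $C^2$ under control and the equation uniformly parabolic, the Evans--Krylov theorem for concave fully nonlinear parabolic equations yields an interior $C^{2,\alpha}$ bound, and its boundary counterpart (Krylov, extended to manifolds) provides the boundary Hölder estimate. Linear parabolic Schauder theory then upgrades $u$ to $C^{\infty}(\bM_T)$, yielding closedness and completing the continuity argument. Uniqueness is the comparison principle for admissible solutions. The same estimates, being independent of $T$ in a quantitative sense through Remark~\ref{remark-1.10} when the data are, extend the argument to $T=\infty$.
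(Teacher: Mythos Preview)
Your overall framework---reduce to a priori estimates, invoke Theorem~\ref{thm2} for second order, Evans--Krylov plus Schauder for the bootstrap, comparison for uniqueness---matches the paper. Two smaller gaps first: you omit the bound on $u_t$, which is part of the $C^{2,1}$ norm and which the paper obtains separately from $\mathcal{L}u_t=\psi_t$ and the maximum principle (Lemma~\ref{lemma-E10}); and your $C^0$ upper bound ``a suitably large constant plus $\varphi^b,\varphi^s$'' is not a supersolution. The paper compares instead with the solution $h$ of $\Delta h+\tr\chi=0$ with $h=\varphi$ on $\partial M_T$, using that admissibility forces $\Delta u+\tr\chi>0$.

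The substantive gap is the gradient estimate, where the paper's actual work lies and which your sketch does not capture. Your case analyses are not what is needed: in case~(iv) one does not substitute $u-w$ to remove $\chi$, and case~(iii) is not handled through \eqref{eq1-13}. The paper's argument works with $|\nabla u|e^{\phi}$ and, for cases~(ii) and~(iv), takes $\phi=-\log(1-bv^2)+A(\ul u+w-Bt)$ with $v=\ul u-u+\mathrm{const}$; at the maximum it splits via Lemma~\ref{ma-lemma-C10} according to whether $|\nu_{\lambda[\ul u]}-\nu_{\lambda[u]}|\ge\beta$ or not. In the first alternative the subsolution yields a coercive term $\varepsilon(1+\sum f_i)$; in the second, all $f_i$ are comparable by~\eqref{eq2-22} and one gets a useful lower bound on $\sum f_i\lambda_i^2$. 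In~(iv) the hypotheses $K_1=0$ (so $\sum f_i\lambda_i\ge0$) and $\nabla^2 w\ge\chi$ are used to make $F^{ii}\nabla_{ii}(\ul u+w)-\ul u_t+B$ large, not to change variables. Case~(iii) uses a different test function built from $u-\inf u$: the critical-point identity then forces $U_{ii}\le0$ for the indices $i$ with $|\nabla_i u|$ large, and~\eqref{eq1-20} gives $F^{ii}\ge\delta_0\sum F^{kk}$ for those $i$ directly. This dichotomy via Lemma~\ref{ma-lemma-C10} and the case-specific test functions are the mechanism that closes the estimate; your schematic inequality and one-line remarks do not reach it.
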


The assumptions ({\sl i})-({\sl iv}) are only needed in deriving the gradient estimates. It would be interesting to remove these assumptions.
When $\partial M = \emptyset$, Theorem~\ref{thm1} holds without
assumptions~\eqref{eq1-0} and \eqref{eq1-11}-\eqref{eq1-11b}, 
and condition~\eqref{eq1-13}
can be removed in each of the cases ({\sl i})-({\sl iii}).

Theorem~\ref{thm1}  applies to a very general class of equations
including $f = \sigma_k^{1/k}$ and 
$f = (\sigma_k/\sigma_l)^{1/(k-l)}$, $1 \leq l < k \leq n$ where 
$\sigma_k$ is the $k$-th elementry symmetric function defined on the 
cone $\Gamma_k := \{\lambda \in \bfR^n: \sigma_j (\lambda) > 0, 
\forall, 1 \leq j \leq k\}$. 
Another interesting example is $f = \log P_k$ to which 
Theorem~\ref{thm1} applies,  where
\[ P_k (\lambda) := \prod_{i_1 < \cdots < i_k} 
(\lambda_{i_1} + \cdots + \lambda_{i_k}), \;\; 1 \leq k \leq n\]
defined in the cone
\[ \mathcal{P}_k : = \big\{\lambda \in \bfR^n:
      \lambda_{i_1} + \cdots + \lambda_{i_k} > 0, 
\; \forall \, 1 \leq i_1 < \cdots < i_k \leq n\big\}. \]

\begin{corollary}
\label{thm1c1} 
Let $f = (\sigma_k/\sigma_l)^{1/(k-l)}$, $\Gamma = \Gamma_k$, ($0 \leq l < k \leq n$; $\sigma_0 = 1$), or  $f = \log P_k$ and 
$\Gamma = \mathcal{P}_k$. The parabolic problem 
\eqref{eq1-1} and \eqref{eq1-1IB} with smooth data admits
a unique admissible solution
$u \in C^{\infty} (\bM_T) \cap C^0 (\ol {M_T})$, provided that there exists an admissible subsolution $\ul u \in C^{2,1} (\ol{M_T})$ satisfying
\eqref{eq1-11}-\eqref{eq1-11b}.
\end{corollary}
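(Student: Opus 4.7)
The strategy is a direct application of Theorem~\ref{thm1}. Condition \eqref{eq1-11}--\eqref{eq1-11b} is granted in the statement, so it suffices to verify the structural hypotheses \eqref{eq1-3}--\eqref{eq1-31} and \eqref{eq1-13}, and then to confirm one of the alternatives (\emph{i})--(\emph{iv}), for each of the two operators.

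For $f=(\sigma_k/\sigma_l)^{1/(k-l)}$ on $\Gamma=\Gamma_k$, the monotonicity \eqref{eq1-3} follows from the Newton--Maclaurin inequalities and the concavity \eqref{eq1-4} is classical for such Hessian quotients. Since $\sigma_k$ vanishes on $\partial\Gamma_k$, so does $f$, giving \eqref{eq1-5}; homogeneity of degree one, $f(R\mathbf{1})=c_{n,k,l}R$, yields \eqref{eq1-0}. Euler's relation gives $\sum f_i\lambda_i = f \ge a$ on $\Gamma^{[a,b]}$, so \eqref{eq1-13} holds with $K_1=0$. For $f=\log P_k$ on $\Gamma=\mathcal{P}_k$, monotonicity and concavity are immediate from the decomposition $\log P_k = \sum_{|S|=k}\log\!\bigl(\sum_{i\in S}\lambda_i\bigr)$, each summand being a composition of $\log$ with a positive linear function on $\mathcal{P}_k$. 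Condition \eqref{eq1-5} holds because $\log P_k \to -\infty$ on $\partial\mathcal{P}_k$, and \eqref{eq1-0} because $P_k(R\mathbf{1})=c'_{n,k}R^{\binom{n}{k}}\to\infty$. As $P_k$ is homogeneous of degree $\binom{n}{k}$, Euler gives $\sum f_i\lambda_i\equiv\binom{n}{k}$, hence \eqref{eq1-13} holds with $K_1=0$.

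The remaining task is to verify one of (\emph{i})--(\emph{iv}). For $k=n$ the cone $\Gamma_k$ coincides with $\Gamma_n$, so (\emph{i}) applies. In general I would verify condition (\emph{iii}), i.e., establish \eqref{eq1-20}: $f_j \ge \delta_0 \sum f_i$ whenever $\lambda_j<0$ on the level set $\partial\Gamma^\sigma=\{f=\sigma\}$. For the quotient $(\sigma_k/\sigma_l)^{1/(k-l)}$ this bound is a standard consequence of the asymptotic behavior of $\sigma_{k-1;j}(\lambda)$ relative to $\sigma_{k-1;i}(\lambda)$ at points with $\lambda_j<0$, normalized by $f=\sigma$. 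For $f=\log P_k$, direct differentiation gives
\[ f_j \,=\, \sum_{\substack{|S|=k\\ j\in S}} \frac{1}{\sum_{i\in S}\lambda_i}, \]
so the subsets $S$ whose sum is closest to zero dominate both $f_j$ and $\sum_i f_i$. A combinatorial observation shows that any such dominant subset must contain every index $j$ with $\lambda_j<0$: otherwise the $k$ smallest entries of $\lambda$ (all at most $\lambda_j<0$ together with the remaining indices) would fail to sum to a positive number, contradicting $\lambda\in\mathcal{P}_k$. Quantifying this dominance uniformly on the compact level set $\partial\Gamma^\sigma$ yields \eqref{eq1-20}.

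The main obstacle is the verification of \eqref{eq1-20} for $\log P_k$: the estimate must hold uniformly on $\partial\Gamma^\sigma$ for each $\sigma>0$, and extracting the constant $\delta_0=\delta_0(\sigma)$ requires careful combinatorial bookkeeping of which factors $\sum_{i\in S}\lambda_i$ are near-vanishing. Once \eqref{eq1-20} is in hand for both families, Theorem~\ref{thm1} applies verbatim and produces the admissible solution $u\in C^\infty(\bM_T)\cap C^0(\ol{M_T})$; uniqueness is then immediate from the comparison principle for parabolic equations applied to admissible functions, owing to the ellipticity \eqref{eq1-3}.
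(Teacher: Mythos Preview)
Your overall strategy coincides with the paper's: the corollary is obtained by checking that $(\sigma_k/\sigma_l)^{1/(k-l)}$ on $\Gamma_k$ and $\log P_k$ on $\mathcal{P}_k$ satisfy the structural hypotheses of Theorem~\ref{thm1} and then invoking that theorem. The paper does not spell out the verification; it simply asserts that Theorem~\ref{thm1} applies to these operators (referring to \cite{Guan14}), and your choice of alternative~(\textsl{iii}) is the right one, since (\textsl{i}), (\textsl{ii}), (\textsl{iv}) impose geometric hypotheses that are not assumed in the corollary.

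There is, however, a genuine error in your sketch of \eqref{eq1-20} for $f=\log P_k$: the level set $\partial\Gamma^\sigma=\{\lambda\in\mathcal{P}_k:\log P_k(\lambda)=\sigma\}$ is \emph{not} compact, so you cannot extract $\delta_0$ by a compactness argument. The inequality must be proved pointwise, and in fact this is elementary once your combinatorial observation is stated precisely. Order $\lambda_1\le\cdots\le\lambda_n$ and let $S^*=\{1,\dots,k\}$, so that $L_{S^*}:=\sum_{i\in S^*}\lambda_i$ is the smallest of the linear forms $L_S$. If $\lambda_j<0$ but $j\notin S^*$, then every index $i\in S^*$ satisfies $\lambda_i\le\lambda_j<0$, whence $L_{S^*}<0$, contradicting $\lambda\in\mathcal{P}_k$. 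Thus $j\in S^*$ and $f_j\ge 1/L_{S^*}$. Since $1/L_{S^*}$ dominates every summand,
\[
\sum_i f_i \;=\; k\sum_{|S|=k}\frac{1}{L_S}\;\le\; k\binom{n}{k}\,\frac{1}{L_{S^*}}\;\le\; k\binom{n}{k}\,f_j,
\]
which gives \eqref{eq1-20} with $\delta_0=\bigl(k\binom{n}{k}\bigr)^{-1}$, uniformly in $\sigma$. The analogous bound for $(\sigma_k/\sigma_l)^{1/(k-l)}$ is classical (see e.g.\ \cite{LiYY90}, \cite{Trudinger95}). With this correction your argument is complete and agrees with the paper's.
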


For $f = (\sigma_k/\sigma_l)^{1/(k-l)}$ or  $f = \log P_k$, 
an admissible subsolution satisfies \eqref{eq1-11a}; see \cite{Guan14}.
Except for $f = \sigma_k^{1/k}$, Corollary~\ref{thm1c1}  is new 
even when $M$ is a bounded smooth domain in $\bfR^n$; see also \cite{JS14}.
On the other hand, for a bounded smooth domain in $\bfR^n$, 
we have the following result which is essentially optimal, both
in terms of assumptions on $f$ and the generality of the domain.

\begin{theorem}
\label{thm1c}
Let $M$ be a bounded smooth domain in $\bfR^n$,
$0 < T \leq \infty$,  and 
let $\chi = \{\chi_{ij}\}$ be a symmetric matrix with 
$\chi_{ij} \in C^{\infty} (\ol{M_T})$.   
Under conditions \eqref{eq1-3}-\eqref{eq1-31}  
and \eqref{eq1-11}-\eqref{eq1-11b},
there exists a unique admissible solution
$u \in C^{\infty} (\bM_T) \cap C^0 (\ol {M_T})$
of equation \eqref{eq1-1} satsfying \eqref{eq1-1IB}.
\end{theorem}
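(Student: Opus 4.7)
The plan is the standard continuity method coupled with uniform a priori $C^{2,1}$ estimates up to the parabolic boundary, the key point being that Euclidean flatness of $M$ removes the need for the auxiliary assumptions ({\sl i})--({\sl iv}) and \eqref{eq1-13} that appear in Theorem~\ref{thm1}. Once uniform $C^{2,1}$ bounds are in hand, the equation is uniformly parabolic and concave in $\nabla^2 u$, so Evans--Krylov theory together with its boundary analogue and classical parabolic Schauder theory upgrade the solution to $C^{\infty}(\bM_T)$. Uniqueness follows from the maximum principle applied to the linearized equation using \eqref{eq1-3} and \eqref{eq1-4}.

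The $C^0$ bound from below is immediate by comparison with $\ul u$, and from above by comparing with a supersolution of the form $\ul u + Ct$ for $C$ large. The interior spatial gradient estimate follows from a Bernstein computation applied to an auxiliary function of the form $|\nabla u|^2 \varphi(u - \ul u)$: since the commutator $[\nabla_i,\nabla_j]$ vanishes on $\bfR^n$, no curvature terms appear and no sign condition like ({\sl ii}) or \eqref{eq1-20} is triggered. The boundary gradient estimate comes from a standard barrier built out of $\ul u$ together with a concave function of $d(x)=\mathrm{dist}(x,\partial M)$, as in \cite{Guan94} and \cite{CNS3}.

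For the second order estimate, Theorem~\ref{thm2} (whose proof of the interior inequality \eqref{eq1-12} does not invoke \eqref{eq1-13}, per Remark~\ref{remark-1.1}) reduces the problem to bounding $|\nabla^2 u|$ on $\partial M_T$. On $\partial_b M_T$ this bound is given by $\varphi^b$, and on $\partial_s M_T$ pure tangential derivatives are read off from $\varphi^s$. The mixed tangential--normal derivatives are estimated by applying the linearized operator to barriers of the form
\[
\pm (u - \varphi^s)_{\xi} + A(u - \ul u) - B\, d(x),
\]
with $\xi$ a smooth tangential vector field extended into $\bM$; flatness of $\bfR^n$ keeps the commutators clean, so the Guan-type barrier construction goes through with no auxiliary condition on $f$. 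The pure normal second derivative is handled by the Caffarelli--Nirenberg--Spruck trick: using \eqref{eq1-5} and the strict admissibility of $\ul u$, one confines $\lambda[u]|_{\partial_s M_T}$ to a fixed compact subset of $\Gamma$, after which the equation controls $u_{\vn\vn}$ in terms of the already bounded tangential and mixed derivatives.

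The main obstacle I expect is precisely this last step, the double-normal bound on $\partial_s M_T$. The danger is that along a sequence of boundary points some eigenvalue of $\nabla^2 u + \chi$ drifts to $\infty$ in a direction where $\sum f_i \lambda_i$ becomes uncontrollably negative; the hypothesis \eqref{eq1-13} is exactly what suppresses this in the Riemannian version of Theorem~\ref{thm2}. In the Euclidean case the difficulty can be circumvented because the subsolution $\ul u$ may be translated rigidly inside $\bfR^n$ and used as a barrier without any curvature correction, so a variant of the rotation--translation argument of Trudinger~\cite{Trudinger95} (as refined in \cite{Guan99a}) applied at the tangent hyperplane to $\partial \Sigma^{\psi(x,t)}$ at $(\lambda[\ul u],\ul u_t)$ furnishes the required compactness of $\lambda[u]$ on $\partial_s M_T$. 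Assembling these estimates and running the continuity method in the usual parabolic $C^{2,\alpha}$ framework completes the proof.
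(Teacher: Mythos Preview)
Your overall strategy is correct and matches the paper's: Euclidean flatness is precisely what removes the extra hypotheses. The gradient estimate is the paper's case ({\sl ii}) verbatim (sectional curvature $\equiv 0$), and the interior second-order bound is Theorem~\ref{thm2} as you say. But two of your specific steps do not work as written, and your mechanism for dropping \eqref{eq1-13} on $\partial_s M_T$ is not the one the paper uses.

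First, a minor point: your upper $C^0$ barrier $\ul u + Ct$ fails on $\partial_b M_T$, since there $\ul u + Ct = \ul u \leq \varphi^b = u$, so the comparison principle gives nothing. The paper instead uses the spatially harmonic-type function $h$ with $\Delta h + \tr\chi = 0$ and $h = \varphi$ on $\partial M$: since $\lambda[u] \in \Gamma$ forces $\Delta u + \tr\chi > 0$, the elliptic maximum principle at each fixed $t$ gives $u \leq h$.

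More seriously, your barrier $A(u-\ul u) - B d(x)$ for the mixed tangential--normal estimate is too weak. The term $\mathcal{L}(u - \ul u) \leq 0$ is not quantitative: it does not absorb the $C(1+\sum f_i)$ coming from $\mathcal{L}(u-\varphi)_\xi$, and $\mathcal{L}d$ has no useful sign. The paper still needs the full barrier $\varPsi$ of Lemma~\ref{ma-lemma-E10} (including the $\sum_{l<n}|\nabla_l(u-\varphi)|^2$ term and the case-split via Lemma~\ref{ma-lemma-C10}) to produce a quantitative $\mathcal{L}\varPsi \leq -K(\cdots)$. What changes in $\bfR^n$---and this is the actual content of Remark~\ref{remak-3.10}---is that differentiating the equation yields $\mathcal{L}\nabla_k u = \nabla_k\psi - F^{ij}\nabla_k\chi_{ij}$ with no commutator terms, so \eqref{eq3-1} and \eqref{eq3-26} hold \emph{without} the $\sum f_i|\lambda_i|$ contribution. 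Consequently $\varPsi$ only needs to satisfy $\mathcal{L}\varPsi \leq -K(1+\sum f_i)$, and the proof of Lemma~\ref{ma-lemma-E10} delivers this without ever invoking \eqref{eq3-19} (the one place \eqref{eq1-13} enters). The double-normal bound then follows the paper's Trudinger-type argument after \eqref{eq3-20} unchanged; no ``rigid translation of $\ul u$'' is involved or needed.
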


 The first initial-boundary value problem for equation \eqref{eq1-1} 
or \eqref{eq1-1a} in $\bfR^n$ was treated by 
Ivochinkina-Ladyzhenskaya~\cite{IL95}, \cite{IL97},  and by
Wang~\cite{Wang94}, Chou-Wang~\cite{ChouWang01} for 
$f = (\sigma_k)^{1/k}$; see also \cite{Lieberman}. 
Jiao-Sui~\cite{JS14} recently studied equation~\eqref{eq1-1a} 
on Riemannian manifolds under additional assumptions.

The rest of the article is devided into three sections. In 
Sections \ref{3I-C} and \ref{3I-B}  
we derive \eqref{eq1-12} and \eqref{eq1-14} 
respectively, completing the proofs of 
Theorems~\ref{thm2}, \ref{thm3} and \ref{thm2a}.
Instead of ~\eqref{eq1-1}, we shall deal with the equation
\begin{equation} 
\label{eq1-1a}
f (\lambda(\nabla^2 u + \chi)) = u_t + \psi
\end{equation}
under essentially the same assumptions on $f$ with the exception 
that \eqref{eq1-5} is replaced by
\begin{equation} 
\label{eq1-9}
\inf_{\partial_s M_T} (\varphi_t + \psi) - \sup_{\partial \Gamma} f > 0
\end{equation}
which is need in the proof of \eqref{eq1-14} .
Accordingly,  the functions $\varphi^b$ and $\ul u \in C^{2,1} (\ol{M_T})$ 
are assumed to satisfy  $\lambda [\varphi^b] \in \Gamma$
in $\bM$ and, respectivley, 
\begin{equation}
\label{eq1-11'}
\begin{aligned}
  f(\lambda [\ul u]) \geq  {\ul u_t + \psi}
                                              \;\; \mbox{in $M_T$}  \\
 \end{aligned} 
\end{equation}
in place of \eqref{eq1-11}.
Note that if $f > 0$ in $\Gamma$ and satisfies \eqref{eq1-3},  
\eqref{eq1-4}, \eqref{eq1-0} and \eqref{eq1-13} 
then the function $\log f$ still satisfies theses assumptions. 
So equation~\eqref{eq1-1} is covered by \eqref{eq1-1a} in most cases,  
and we shall derive the estimates for equation~\eqref{eq1-1a}.
 
 In Section~\ref{E} we briefly discuss the proof of the
existence results and the preliminary estimates needed in the proof.

At the end of this Introduction we recall the following commonly used
notations
\[ \begin{aligned}
  |u|_{C^{k,l} (\ol{M_T})} = \,& \sum_{j=0}^k |\nabla^j u|_{C^0 (\ol{M_T})}
+\sum_{j=1}^l \Big|\frac{\partial^j u}{\partial t^j}\Big|_{C^0(\ol{M_T})}, \\
 |u|_{C^{k+\alpha, l+\beta} (\ol{M_T})} =\,& |u|_{C^{k,l} (\ol{M_T})}
 + |\nabla^k u|_{C^\alpha (\ol{M_T})} 
 + \Big|\frac{\partial^l u}{\partial t^l}\Big|_{C^{\beta} (\ol{M_T})} 
  \end{aligned} \]
where $0 < \alpha, \beta < 1$ and $k, l = 1, 2, \ldots$,
for a function $u$ sufficiently smooth on $\ol{M_T}$.
We shall also write $|u|_{C^{k} (\ol{M_T})} = |u|_{C^{k,k} (\ol{M_T})}$. 

\hspace{0.3cm}

\noindent
\textbf{Acknowledgement.} 
Part of this work was done while the second author was visiting
Department of Mathematics at Ohio State University. He wishes to thank 
the Department and University for their hospitality.

\bigskip

\section{Global estimates for second derivatives}
\label{3I-C}
\setcounter{equation}{0}

\medskip

A substantial difficulty in deriving the global estimate \eqref{eq1-12},
which is our primary goal in this section,
 is caused due to the presense of curvature of $M$; another is the lack of (globally
defined) functions or geometric quantities with desirable properties. 
In our proof the use of the function $\ul u$, which is either an
admissible subsolution as in Theorem~\ref{thm2} or satisfies \eqref{eq1-11a}, is critical.
We shall consider equation~\eqref{eq1-1a} in place of \eqref{eq1-1}.

Let $u \in C^{4,2} (M_T) \cap C^{2,1} (\ol{M_T})$ be an admissible solution of \eqref{eq1-1a}, and $\ul u \in C^{2,1} (\ol{M_T})$ an
admissible function. We assume 
that $u$ admits an {\em a priori} $C^1$ bound
\begin{equation}
\label{eq2-195}
|u|_{C^1 (\ol {M_T})} \leq C.
\end{equation}

Let $\phi (s) = - \log (1 - b s^2)$ and
\begin{equation}
\label{eq2-200}
 \eta = \phi (1 +|\nabla (u - \ul u)|^2) + a (\ul{u} - u - \delta t)
\end{equation}
where $a, b, \delta > 0$ are constants and $\ul u \in C^{2,1} (\ol {M_T})$
is an admissible function;
we shall choose $\delta = 1$ or $0$,
$a$ sufficiently large
while $b$ small enough,
\begin{equation}
\label{eq2-205}
b \leq \frac{1}{8 b_1^2}, \;\;  
   b_1 = 1 + \sup_{{M_T}} |\nabla  (u - \ul u)|^2. 
\end{equation}

Consider the quantity
\[ W = \sup_{(x,t) \in {M_T}} \max_{\xi \in T_x M^n, |\xi| = 1}
  (\nabla_{\xi \xi} u + \chi (\xi, \xi)) e^{\eta}. \]
Suppose $W$ is
achieved at an interior point $(x_0, t_0) \in  M_T$ for a unit
vector $\xi \in T_{x_0} M^n$. Let $e_1, \ldots, e_n$ be
smooth orthonormal local frames about $x_0$ such that $e_1 = \xi$,
$\nabla_i e_j = 0$ and $U_{ij} := \nabla_{ij} u + \chi_{ij}$ are diagonal 
at $(x_0, t_0)$. 
So $W=U_{11} (x_0, t_0) e^{\eta (x_0, t_0)}$. 
We wish to derive a bound 
\begin{equation}
\label{eq2-148} 
U_{11} (x_0, t_0)  \leq C. 
\end{equation}

Write equation \eqref{eq1-1a} in the form
\begin{equation}
\label{eq1-1-1}
u_t = F (U) - \psi, \;\;  U = \{U_{ij}\}
\end{equation}
where $F$ is defined by 
\[ F (A) \equiv f (\lambda [A]) \]
for an $n \times n$ symmetric
matrices $A = \{A_{ij}\}$ with eigenvalues $\lambda [A] \in \Gamma$.
Differentiating \eqref{eq1-1-1} gives
\begin{equation}
\label{eq2-4} 
\begin{aligned}
              u_{tt} = \,& F^{ij} U_{ijt} - \psi_t,\\
   \nabla_k u_t = \,& F^{ij} \nabla_k U_{ij} - \nabla_k \psi,
                               \;\; \forall \, k, \\
\nabla_{11} u_t = \,& F^{ij} \nabla_{11} U_{ij} 
                                 + F^{ij, kl} \nabla_1 U_{ij} \nabla_1 U_{kl}
                                 - \nabla_{11}\psi.
\end{aligned}
\end{equation}
Throughout the paper we use the notation
\[ F^{ij} = \frac{\partial F}{\partial A_{ij}} (U), \;\;
  F^{ij, kl} = \frac{\partial^2 F}{\partial A_{ij} \partial A_{kl}} (U). \] 
The matrix $\{F^{ij}\}$ has eigenvalues $f_1, \ldots, f_n$, and therefore is positive definite when $f$ satisfies \eqref{eq1-3}, 
while \eqref{eq1-4}
implies that $F$ is a concave function; see \cite{CNS3}.
Moreover, the following identities hold
\[   \begin{aligned}
F^{ij} U_{ij} = \,& \sum f_i \lambda_i, \;\;
  F^{ij} U_{ik} U_{kj} = \sum f_i \lambda_i^2. 
  \end{aligned} \]
We also note that $F^{ij}$ are diagonal at $(x_0, t_0)$.

\begin{proposition}
\label{eq2-149}
For any $a, C_1 > 0$ there exists a constant $b > 0$ 
satisfying \eqref{eq2-205} 
such that,  at $(x_0, t_0)$, if $U_{11} \geq C_1 a/b$ then 
\begin{equation}
\label{eq2-141}
 \begin{aligned}
\frac{b}{2} F^{ii} U_{ii}^2 + a F^{ii} \nabla_{ii} (\ul u - u) - a (\ul u_t - u_t)  + a \delta
      \leq C \sum F^{ii} + C.
\end{aligned}
\end{equation}
\end{proposition}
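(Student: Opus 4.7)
The plan is the standard maximum-principle argument for the auxiliary function $v := \log U_{11} + \eta$, which attains its interior space-time maximum at $(x_0, t_0)$. Set $w := u - \ul u$ and $\rho := 1 + |\nabla w|^2$. The critical-point equation $\nabla_i v = 0$ reads
\[
\frac{\nabla_i U_{11}}{U_{11}} = -\phi'(\rho)\nabla_i\rho - a\nabla_i(\ul u - u),
\]
and the second-order condition gives $\cL v \le 0$, where $\cL := F^{ii}\nabla_{ii} - \partial_t$.

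First I would compute $\cL\log U_{11}$. Using the commutator $\nabla_{ii} U_{11} = \nabla_{11} U_{ii} + O(U_{11}+1)$ on the manifold (curvature absorbed into the error), the twice-differentiated equation from \eqref{eq2-4}, and the concavity inequality $-F^{ij,kl}\nabla_1 U_{ij}\nabla_1 U_{kl} \geq 0$ from \eqref{eq1-4}, one obtains
\[
\cL\log U_{11} \geq -F^{ii}\frac{(\nabla_i U_{11})^2}{U_{11}^2} - C\sum F^{ii} - C.
\]

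Next, $\cL\eta = \phi''(\rho) F^{ii}(\nabla_i\rho)^2 + \phi'(\rho)[F^{ii}\nabla_{ii}\rho - \partial_t\rho] + aF^{ii}\nabla_{ii}(\ul u - u) - a(\ul u_t - u_t) + a\delta$. Expanding $\nabla_{ii}\rho = 2\sum_k(\nabla_{ik}w)^2 + 2\sum_k(\nabla_k w)\nabla_{iik}w$ and $\partial_t\rho = 2\sum_k(\nabla_k w)(\nabla_k w_t)$, commuting $\nabla_{iik}$ to $\nabla_k\nabla_{ii}$, and substituting $F^{ii}\nabla_k U_{ii} = \nabla_k u_t + \nabla_k\psi$ from the once-differentiated equation, the $\nabla_k u_t$ contributions cancel between the spatial and the time pieces, leaving
\[
\phi'(\rho)[F^{ii}\nabla_{ii}\rho - \partial_t\rho] \geq 2\phi'(\rho) F^{ii}\sum_k(\nabla_{ik}w)^2 - C\phi'(\rho)\sum F^{ii} - C\phi'(\rho).
\]
At the diagonalized point $\sum_k(\nabla_{ik}w)^2 \ge (\nabla_{ii}w)^2 = (U_{ii} - \chi_{ii} - \nabla_{ii}\ul u)^2 \ge \tfrac{1}{2}U_{ii}^2 - C$, and under \eqref{eq2-205} one checks $\phi'(\rho) \ge 2b$, so this good term dominates $2b F^{ii}U_{ii}^2 - C\sum F^{ii}$.

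Finally, the negative $-F^{ii}(\nabla_i U_{11})^2/U_{11}^2$ from $\cL\log U_{11}$ is absorbed via the critical-point equation and Cauchy--Schwarz,
\[
F^{ii}\frac{(\nabla_i U_{11})^2}{U_{11}^2} \le 2(\phi')^2 F^{ii}(\nabla_i\rho)^2 + 2a^2 F^{ii}|\nabla(\ul u - u)|^2,
\]
together with the pointwise relation $(\phi')^2 \le \tfrac{1}{2}\phi''$ on the range \eqref{eq2-205} (a direct computation for $\phi(s) = -\log(1-bs^2)$); this forces the first summand into the positive $\phi''(\rho) F^{ii}(\nabla_i\rho)^2$ contribution in $\cL\eta$. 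The second summand is $O(\sum F^{ii})$ by the $C^1$ bound \eqref{eq2-195}, and the linear-in-$U_{ii}$ cross terms arising from $aF^{ii}\nabla_{ii} u = aF^{ii}U_{ii} - aF^{ii}\chi_{ii}$ are absorbed into $\tfrac{b}{2}F^{ii}U_{ii}^2$ once $C_1$ is taken large in the hypothesis $U_{11} \ge C_1 a/b$. The main obstacle is the algebraic bookkeeping: engineering the $\nabla u_t$ cancellation, tracking signs across $\cL\log U_{11} + \cL\eta \le 0$, and verifying that the curvature corrections together with the third-order contributions of $\ul u$ from the covariant-derivative commutators are indeed of order $O(\sum F^{ii} + 1)$.
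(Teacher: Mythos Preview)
There is a genuine gap. Your scheme throws away the second-order concavity term via $-F^{ij,kl}\nabla_1 U_{ij}\nabla_1 U_{kl}\ge 0$ and then absorbs \emph{all} of $F^{ii}(\nabla_i U_{11})^2/U_{11}^2$ by Cauchy--Schwarz on the critical-point relation. That produces, on the right-hand side, the term
\[
2a^2\sum_i F^{ii}(\nabla_i w)^2 \;\le\; 2a^2|\nabla w|^2\sum_i F^{ii},
\]
which you record as ``$O(\sum F^{ii})$''. But the implied constant is proportional to $a^2$, and the whole point of Proposition~\ref{eq2-149} is that the constant $C$ in \eqref{eq2-141} be \emph{independent of $a$}: immediately after the proposition one substitutes $F^{ii}\nabla_{ii}(\ul u-u)-(\ul u_t-u_t)\ge\varepsilon(1+\sum F^{ii})$ and then sends $a\to\infty$. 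With your $C\sim a^2$, that step collapses. Nor can you absorb $a^2\sum_i F^{ii}$ into $\tfrac{b}{2}F^{ii}U_{ii}^2$ under the hypothesis $U_{11}\ge C_1 a/b$, because for indices $i$ with $U_{ii}$ small and $F^{ii}$ large there is no lower bound on $F^{ii}U_{ii}^2$ in terms of $F^{ii}$.

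The paper avoids this by \emph{not} discarding the concavity term. It splits indices into $J=\{i:3U_{ii}\le -U_{11}\}$ and its complement and uses (following Urbas) the sharper inequality for $-F^{ij,kl}\nabla_1 U_{ij}\nabla_1 U_{kl}$ to cancel the contributions $F^{ii}(\nabla_i U_{11})^2/U_{11}^2$ for $i\notin J$ against the concavity term, leaving only an $F^{11}$-weighted error; see \eqref{eq2-10}. The upshot is \eqref{eq2-14-1}: the $a^2$-terms appear only as $Ca^2\sum_{i\in J}F^{ii}$ and $Ca^2 F^{11}$. These \emph{can} be absorbed by $\phi' F^{ii}U_{ii}^2$ via \eqref{eq2-21}, since $|U_{ii}|\ge U_{11}/3$ for $i\in J$ and the $F^{11}U_{11}^2$ piece handles $Ca^2 F^{11}$ once $U_{11}\ge C_1 a/b$. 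Your outline is missing precisely this index decomposition and the refined use of concavity; without it the argument does not close. (As a minor point, there are no ``linear-in-$U_{ii}$ cross terms'' to absorb: the quantity $aF^{ii}\nabla_{ii}(\ul u-u)$ is kept intact on the left of \eqref{eq2-141}.)
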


\begin{proof}
We shall assume $U_{11} (x_0, t_0) \geq 1$.
At $(x_0, t_0)$ where the function $\log U_{11}+\eta$ has
its maximum, 
\begin{equation} 
\label{eq2-1}
\frac{(\nabla_{11} u)_t}{U_{11}} + \eta_t \geq 0, \;\; 
\frac{\nabla_{i} U_{11}}{U_{11}} + \nabla_{i} \eta = 0, \;\; 1 \leq i \leq n,
\end{equation}
and
\begin{equation}
\label{eq2-2}
\begin{aligned}
  \frac{1}{U_{11}} F^{ii} \nabla_{ii} U_{11}
   - \frac{1}{U_{11}^2} F^{ii} (\nabla_i U_{11})^2 
   + F^{ii} \nabla_{ii} \eta \leq 0.
\end{aligned}
\end{equation}

From the identity 
\begin{equation}
\label{eq2-6}
\begin{aligned}
\nabla_{ijkl} v - \nabla_{klij} v
\,& = R^m_{ljk} \nabla_{im} v + \nabla_i R^m_{ljk} \nabla_m v
      + R^m_{lik} \nabla_{jm} v \\
  + \,& R^m_{jik} \nabla_{lm} v
      + R^m_{jil} \nabla_{km} v + \nabla_k R^m_{jil} \nabla_m v
\end{aligned}
\end{equation}
it follows that 
\begin{equation}
\label{eq2-7}
\begin{aligned}
F^{ii} \nabla_{ii} U_{11}
\geq \,& F^{ii} \nabla_{11} U_{ii} - C  U_{11} \sum F^{ii}，
\end{aligned}
\end{equation}
where $C$ depends on
$|\nabla u|_{C^0 (\bM_T)}$ and geometric quantities of $M$. 
By \eqref{eq2-2}, \eqref{eq2-7} and \eqref{eq2-4} we obtain
\begin{equation}
\label{eq2-8}
\begin{aligned}
F^{ii} \nabla_{ii} \eta - \eta_t \leq \,& 
   \frac{1}{U_{11}} F^{ij, kl} \nabla_1 U_{ij} \nabla_1 U_{kl}
           + \frac{1}{U_{11}^2} F^{ii} (\nabla_i U_{11})^2 \\
    &  - \frac{\nabla_{11} \psi}{U_{11}} + C \sum F^{ii}.
\end{aligned}
\end{equation}

Let  
\[  \begin{aligned}
J \,& = \{i: 3 U_{ii} \leq - U_{11}\}, \;\;
K  = \{i> 1: 3 U_{ii} > - U_{11}\}.
  \end{aligned} \]
As in \cite{Guan14}, which uses an idea of Urbas~\cite{Urbas02}, 
one derives
\begin{equation}
\label{eq2-10}
\begin{aligned}
F^{ii} \nabla_{ii} \eta - \eta_t 
  \leq \,& \sum_{i \in J} F^{ii} (\nabla_i \eta)^2
             + C F^{11} \sum_{i \notin J} (\nabla_i \eta)^2
     - \frac{\nabla_{11} \psi}{U_{11}} + C \sum F^{ii}.
\end{aligned}
\end{equation}

For convenience we write $w = \ul u - u$, $s = 1 +|\nabla w|^2$, 
and calculate
\[ \begin{aligned}
 \nabla_i \eta
   = \,& 2 \phi' \nabla_{k} w \nabla_{ik} w + a \nabla_i w, \\
 \eta_t = \,& 2\phi' \nabla_k w (\nabla_k w)_t + a w_t - a \delta, \\
  \nabla_{ii} \eta
   = \,&  2 \phi' (\nabla_{ik} w \nabla_{ik} w
          + \nabla_{k} w \nabla_{iik} w)
          + 4 \phi'' (\nabla_{k} w \nabla_{ik} w)^2 
          + a \nabla_{ii} w,
\end{aligned} \]
while
\[ \phi' (s) = \frac{2bs }{1 - b s^2}, \;\;
    \phi'' (s) = 
\frac{2 b + 2 b^2 s^2}{ (1 - b s^2)^2} > 4 (\phi')^2. \] 
Hence,
\begin{equation}
\label{eq2-11}
\begin{aligned}
  \sum_{i \in J} F^{ii} (\nabla_i \eta)^2
    \leq \,& 8 (\phi')^2 \sum_{i \in J} F^{ii} (\nabla_{k} w \nabla_{ik} w)^2
          + 2 |\nabla w|^2 a^2 \sum_{i \in J} F^{ii},
\end{aligned}
\end{equation}
and
\begin{equation}
\label{eq2-12}
 \sum_{i  \notin J} (\nabla_i \eta)^2
\leq C a^2 + C (\phi')^2 U_{11}^2.
\end{equation}
By \eqref{eq2-4},
 \begin{equation}
\label{eq2-13}
 \begin{aligned}
 F^{ii} \nabla_{ii} \eta - \eta_t 
\geq \,&  \phi'  F^{ii} U_{ii}^2 
               + 2 \phi'' F^{ii} (\nabla_{k} w \nabla_{ik} w)^2 \\
         & + a F^{ii} \nabla_{ii} w - a w_t + a \delta
          - C \phi' \Big(1 + \sum F^{ii}\Big).
\end{aligned}
\end{equation}
It follows from 
 \eqref{eq2-10}-\eqref{eq2-13} that 
\begin{equation}
\label{eq2-14-1}
 \begin{aligned}
\phi' F^{ii} U_{ii}^2 + \,& a F^{ii} \nabla_{ii} w - a w_t + a \delta \\
    \leq \,& C a^2 \sum_{i \in J} F^{ii}
                 + C (a^2 + (\phi')^2 U_{11}^2) F^{11} 
      - \frac{\nabla_{11} \psi}{U_{11}} + C \Big(\phi' + \sum F^{ii}\Big).
\end{aligned}
\end{equation}
Note that 
\begin{equation}
\label{eq2-21}
 F^{ii} U_{ii}^2 \geq  F^{11} U_{11}^2 + \sum_{i \in J} F^{ii} U_{ii}^2
   \geq F^{11} U_{11}^2 + \frac{U_{11}^2}{9} \sum_{i \in J} F^{ii}.
 \end{equation}
We may fix $b$ small to derive \eqref{eq2-141} when 
$U_{11} \geq C a/b$.
\end{proof}

To proceed we need the following lemma which is key to the proof 
of Theorem~\ref{thm2}, both for \eqref{eq1-12} in this section 
and \eqref{eq1-14} in the next section; compare with Lemma~2.1 in 
\cite{Guan}.

\begin{lemma}
\label{ma-lemma-C10} 
Let $K$ be a compact subset of $\Gamma$ and $\beta > 0$.
There is constant $\varepsilon > 0$ such that, for any $\mu \in K$ and $\lambda \in \Gamma$,  
when $|\nu_{\mu} - \nu_{\lambda}| \geq \beta$
(where $\nu_{\lambda} = Df (\lambda)/|Df (\lambda)|$ 
denotes the unit normal vector to the level surface of $f$ through 
$\lambda$), 
\begin{equation}
\label{gj-C160'}
  \sum f_i (\lambda) (\mu_i - \lambda_i) \geq f (\mu) - f(\lambda) 
         + \varepsilon \Big( 1 + \sum f_i (\lambda)\Big).
\end{equation}
\end{lemma}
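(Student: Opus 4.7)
The plan is to exploit the hypothesis $|\nu_\lambda - \nu_\mu| \geq \beta$ by choosing a direction in which $Df(\lambda)$ and $Df(\mu)$ diverge quantitatively, and then applying the concavity inequality at a small perturbation of $\mu$ (rather than $\lambda$) in that direction. I would set $\eta := (\nu_\lambda - \nu_\mu)/|\nu_\lambda - \nu_\mu|$, a unit vector, and note the identities
\[
\nu_\lambda \cdot \eta \;=\; -\nu_\mu \cdot \eta \;=\; \tfrac12 |\nu_\lambda - \nu_\mu| \;\geq\; \beta/2,
\]
which follow from $|\nu_\lambda - \nu_\mu|^2 = 2(1 - \nu_\lambda\cdot\nu_\mu)$. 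Consequently
\[
(Df(\lambda) - Df(\mu))\cdot \eta \;\geq\; \tfrac{\beta}{2}\bigl(|Df(\lambda)| + |Df(\mu)|\bigr).
\]

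Next I would apply concavity of $f$ at $\lambda$ to the point $\tilde\mu := \mu - t\eta$, for a small $t > 0$ chosen uniformly so that $\tilde\mu \in \Gamma$ for every $\mu \in K$ (possible because $K$ is a compact subset of the open cone $\Gamma$):
\[
f(\tilde\mu) \;\leq\; f(\lambda) + \sum f_i(\lambda)(\tilde\mu_i - \lambda_i).
\]
Subtracting $f(\mu) - f(\lambda)$ and rearranging yields
\[
\sum f_i(\lambda)(\mu_i - \lambda_i) - (f(\mu) - f(\lambda)) \;\geq\; \bigl(f(\tilde\mu) - f(\mu)\bigr) + t\, Df(\lambda)\cdot\eta.
\]
A Taylor expansion gives $f(\tilde\mu) - f(\mu) = -t\, Df(\mu)\cdot\eta + R$ with $|R| \leq \tfrac12 C_K t^2$, where $C_K$ bounds $|D^2 f|$ on a fixed $t_0$-neighborhood $K'$ of $K$ (finite, since $K'$ is a compact subset of $\Gamma$). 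Combining with the key estimate above,
\[
\sum f_i(\lambda)(\mu_i - \lambda_i) - (f(\mu) - f(\lambda))
\;\geq\; \tfrac{t\beta}{2}\bigl(|Df(\lambda)| + |Df(\mu)|\bigr) - \tfrac12 C_K t^2.
\]

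To finish I would fix $t > 0$ depending only on $K, \beta, f$ so that $\tfrac12 C_K t^2 \leq \tfrac{t\beta}{4} c_0$, where $c_0 := \inf_K |Df| > 0$ is positive because $K$ is compact in $\Gamma$ and $f_i > 0$. Combined with the Cauchy--Schwarz bound $|Df(\lambda)| \geq n^{-1/2}\sum f_i(\lambda)$, this produces a lower bound of the form $\varepsilon\bigl(1 + \sum f_i(\lambda)\bigr)$ with $\varepsilon$ depending only on $K, \beta, f$. The main delicate point will be uniformity in $\lambda \in \Gamma$: since $\lambda$ may be unbounded or approach $\partial\Gamma$, the Hessian $D^2 f(\lambda)$ is out of reach, so one cannot center the Taylor expansion at $\lambda$. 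Perturbing $\mu$ on the compact side attaches all Taylor-type constants to $K$ alone, which is what makes the final $\varepsilon$ uniform over all $\lambda \in \Gamma$.
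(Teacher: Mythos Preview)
Your argument is correct and is genuinely different from the paper's proof. The paper perturbs $\mu$ in the fixed diagonal direction ${\bf 1}$, then uses a geometric construction: it locates, on a small spherical cap around $\mu^\epsilon$ determined by the angular gap $\beta$, a point $\zeta$ lying on the line through $\mu^\epsilon$ perpendicular to $\nu_\lambda$, and uses compactness of the level sets to bound $f(\zeta) - f(\mu^\epsilon)$ from below by a uniform $\theta > 0$; the inequality then follows from concavity applied at $\zeta$, with the $\sum f_i$ term produced separately by the diagonal shift. Your approach instead perturbs $\mu$ in the $\lambda$-dependent direction $\eta = (\nu_\lambda - \nu_\mu)/|\nu_\lambda - \nu_\mu|$ and extracts both the constant and the $\sum f_i$ terms in one stroke from the identity $(Df(\lambda) - Df(\mu))\cdot\eta \geq \tfrac{\beta}{2}(|Df(\lambda)| + |Df(\mu)|)$, with a second-order Taylor remainder controlled on a fixed compact neighborhood $K'$ of $K$. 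Your route is shorter and more elementary, at the cost of using a $C^2$ bound on $f$ over $K'$; the paper's argument is purely first-order (concavity plus continuity of $f$ and of $\nu_\mu$) and so in principle applies under slightly weaker smoothness, but since $f$ is assumed smooth here this makes no practical difference. You correctly identify the key uniformity issue: all second-order information must be drawn from the compact side $K$, never from $\lambda$.
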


\begin{proof} 
Since $\nu_{\mu}$ is smooth in $\mu \in \Gamma$ and $K$ is compact, there is $\epsilon_0 > 0$ such that for any 
$0 \leq \epsilon \leq \epsilon_0$,
\[ K^{\epsilon} :=\{\mu^{\epsilon} := \mu - \epsilon {\bf 1}: 
       \mu \in K\} \] 
is still a compact subset of $\Gamma$ and 
\[ |\nu_{\mu} - \nu_{\mu^{\epsilon}}| \leq \frac{\beta}{2}, 
\;\; \forall \, \mu \in K. \] 
Consequently, if $\mu \in K$ and $\lambda \in \Gamma$ satisfy  
$|\nu_{\mu} - \nu_{\lambda}| \geq \beta$ then 
$|\nu_{\mu^{\epsilon}} - \nu_{\lambda}| \geq \frac{\beta}{2}$.
 
By the smoothness of the level surfaces of $f$, there exists
$\delta > 0$ (which depends on $\beta$ but is uniform in $\epsilon \in [0, \epsilon_0]$) such that
\[ \min_{\mu \in K} \min_{0 \leq \epsilon \leq \epsilon_0}
\mbox{dist} (\partial B_{\delta}^{\frac{\beta}{2}} (\mu^{\epsilon} ),
\partial \Gamma^{f(\mu^{\epsilon})}) > 0 \]
where $\partial B_{\delta}^{\frac{\beta}{2}} (\mu^{\epsilon})$
denotes the spherical cap
\[ \partial B_{\delta}^{\frac{\beta}{2}} (\mu^{\epsilon})
     = \Big\{\zeta \in \partial B_{\delta} (\mu^{\epsilon}): \;
    \nu_{\mu^{\epsilon}} \cdot  (\zeta - \mu^{\epsilon})/\delta
        \geq \frac{\beta}{2} \sqrt{1-\beta ^2/16}\Big\}. \]
Therefore,
\begin{equation}
\label{gj-C150}
 \theta \equiv \min_{\mu  \in K} 
  \min_{0 \leq \epsilon \leq \epsilon_0}
  \min_{\zeta \in \partial B_{\delta}^{\frac{\beta}{2}} (\mu^{\epsilon})}
    \{f (\zeta) - f(\mu^{\epsilon})\} > 0.
\end{equation}

 Let $P$
be the two-plane through ${\mu}^{\epsilon}$ spanned by
$\nu_{{\mu}^{\epsilon}}$ and $\nu_{{\lambda}}$
 (translated to ${\mu}^{\epsilon}$), and
 $L$ the line on $P$ through
${\mu}^{\epsilon}$ and perpendicular to $\nu_{{\lambda}}$. Since
$0 < \nu_{{\mu}^{\epsilon}} \cdot \nu_{{\lambda}} 
    \leq 1 - \beta^2/8$,
$L$ intersects $\partial B_{\delta}^{\frac{\beta}{2}} ({\mu}^{\epsilon})$
at a unique point $\zeta$. By the concavity of $f$ we see that ,
\begin{equation}
\label{gj-C160}
\begin{aligned}
  \sum f_i ({\lambda}) ({\mu}^{\epsilon}_i - {\lambda}_i)
       = \,& \sum f_i ({\lambda}) (\zeta_i - {\lambda}_i) \\
  \geq \,& f (\zeta) - f ({\lambda}) \\
  \geq \,& \theta + f ({\mu}^{\epsilon}) - f ({\lambda}), \;\;
\forall \, 0 \leq \epsilon \leq \epsilon_0.
\end{aligned}
\end{equation}

Next, by the continuity of $f$ we may choose 
$0< \epsilon_1 \leq \epsilon_0$ with
$|f ({\mu}^{\epsilon_1})-f ({\mu})| \leq \frac{1}{2} \theta$. Hence
\begin{equation}
\sum f_{i} ({\lambda}) ({\mu}_i - \epsilon_1 - {\lambda}_i) 
 \geq f ({\mu}) - f({\lambda}) + \frac{1}{2} \theta.
\end{equation}
This proves \eqref{gj-C160'} with 
$\varepsilon = \min \{\theta/2, \epsilon_1\}$. 
\end{proof}

\begin{remark} 
Alternatively, one can first prove 
\[ \sum f_i ({\lambda}) ({\mu}_i - {\lambda}_i)
  \geq \theta + f ({\mu}) - f ({\lambda}). \] 
Then choose $\epsilon > 0$ small such that 
$0 \leq f (\mu) - f ({\mu}^{\epsilon}) \leq \frac{\theta}{2}$.
By the concavity of $f$, 
\begin{equation}
\begin{aligned}
  \sum f_i ({\lambda}) ({\mu}^{\epsilon}_i - {\lambda}_i)
  \geq \,& f (\mu^{\epsilon}) - f ({\lambda}) 
  \geq  f ({\mu}) - f ({\lambda}) - \frac{\theta}{2}.
\end{aligned}
\end{equation}
Now add these two inequlities to obtain \eqref{gj-C160'}.
\end{remark}

We now continue to prove \eqref{eq2-148}.
Assume first that $\ul u$ is a subsolution, i.e. $\ul u$ satisfies \eqref{eq1-11'}. 
Since $\lambda [\ul u]$ falls in a compact subset of $\Gamma$, 
\begin{equation}
\label{eq2-16}
\beta := \frac{1}{2} \min_{\bM} 
     \mbox{dist} (\nu_{\lambda [\ul u]}, \partial \Gamma_n) > 0. 
\end{equation}


Let $\lambda = \lambda [u] (x_0, t_0)$ and  
$\mu = \lambda [\ul u] (x_0, t_0)$. 
If $|\nu_{\mu}- \nu_{\lambda}| \geq \beta$  
then by Lemma~\ref{ma-lemma-C10},
\begin{equation}
F^{ii} \nabla_{ii} w - w_t
 \geq \sum f_i (\lambda) (\mu_i - \lambda_i) - f (\mu) - f (\lambda)
 \geq \varepsilon \Big(1 + \sum F^{ii}\Big).
\end{equation}
The first inequality follows from Lemma 6.2 in \cite{CNS3}; see \cite{Guan14}.
 We may fix $a$ sufficiently large to derive a bound 
$U_{11} (x_0, t_0) \leq C$ by \eqref{eq2-141}.

Suppose now 
that $|\nu_{\mu}- \nu_{\lambda}| <  \beta$ and therefore
$\nu_{\lambda} - \beta {\bf 1} \in \Gamma_n$.
It follows that 
\begin{equation}
\label{eq2-22}
 F^{ii} \geq \frac{\beta}{\sqrt{n}} \sum F^{kk},
    \;\; \forall \, 1 \leq i \leq n.
\end{equation}
 Since $\ul{u}$ is a subsolution, 
$F^{ii} \nabla_{ii} w - w_t \geq 0$ by the concavity of $f$.
By \eqref{eq2-141} and \eqref{eq2-22} we obtain
\begin{equation}
\label{eq2-142}
 \begin{aligned}
\frac{b\beta }{2 \sqrt{n}} U_{11}^2 \sum F^{ii} + a \delta
      \leq C \sum F^{ii} + C.
\end{aligned}
\end{equation}

If we allow $\delta = 1$, a bound $U_{11} (x_0, t_0) \leq C$ would follow 
when $a$ is sufficiently large without using assumption~\eqref{eq1-0}.  
This gives \eqref{eq1-12a} in Theorem~\ref{thm2a}.

For the case $\delta = 0$, we need assumption~\eqref{eq1-0}. 
First, by the concavity of $f$,
\begin{equation}
\label{eq2-24a}
\begin{aligned}
  |\lambda| \sum f_i
  \geq \,& f (|\lambda| {\bf 1}) - f (\lambda)
        + \sum f_i \lambda_i \\
  \geq & f (|\lambda| {\bf 1}) - f (\lambda) 
        - \frac{1}{4 |\lambda|}
             \sum f_i \lambda_{i}^2 - |\lambda| \sum f_i.
    \end{aligned} 
\end{equation}
Hence, by assumption~\eqref{eq1-0},
\begin{equation}
\label{eq2-24}
\begin{aligned}
 U_{11}^2 \sum F^{ii} 
 \geq \,& \frac{U_{11}}{2 n} (f (U_{11} {\bf 1}) - u_t - \psi)
          - \frac{1}{8} \sum F^{ii} U_{ii}^2 \\
\geq \,& \frac{U_{11}}{2 n} - \frac{U_{11}^2}{8} \sum F^{ii}
    \end{aligned}
\end{equation}
 when $U_{11}$ is sufficiently large.
A bound $U_{11} (x_0, t_0) \leq C$ therefore follows from \eqref{eq2-142}.
The proof of \eqref{eq1-12} in Theorem~\ref{thm2} is complete.

\begin{remark}
\label{eq1-01a}
If \eqref{eq1-01} holds, a bound $U_{11} (x_0, t_0) \leq C$ follows from \eqref{eq2-142} directly (without using \eqref{eq1-0}) and is independent to $|u_t|_{C^0(\ol{M_T})}$. 
\end{remark}

Suppose now that $\ul u$ staisfies \eqref{eq1-11a} with the obvious
modification, i.e. with $\Sigma^{\sigma}$ redefined as
$\Sigma^{\sigma} = \{(\lambda, p) \in \Gamma \times \bfR:
f (\lambda) > p + \sigma\}$. 
By Lemma~\ref{ma-lemma-C20}  below
we have 
\begin{equation}
\label{eq2-41}
F^{ii} \nabla_{ii} w - w_t
 \geq \varepsilon \Big(1 + \sum F^{ii}\Big);
\end{equation}
when $U_{11}$ is sufficiently large.
Therefore, fixing $a$ large in \eqref{eq2-141} gives
$U_{11} (x_0, t_0) \leq C$. 
This completes the proof of \eqref{eq1-12} in Theorem~\ref{thm3},
subject to the proof of Lemma~\ref{ma-lemma-C20}.

\begin{lemma}
\label{ma-lemma-C20} 
Let $K$ be a compact subset of $\Gamma \times \bfR$ such that 
$\hat S^{\sigma}_{\hat{\mu}} [a, b] 
  := \hat S^{\sigma}_{\hat{\mu}} \cap \Gamma \times [a, b]$
is compact for any $\hat \mu \in K$. Then there exist $R, \epsilon > 0$ such that for all $\lambda \in \Gamma^{[a, b]}$, 
when $|\lambda| > R$,
\begin{equation}
\label{gj-C160''}
  \sum f_i (\lambda) (\mu_i - \lambda_i) \geq z - f(\lambda) 
         + \varepsilon \Big( 1 + \sum f_i (\lambda)\Big),
\;\; \forall \; (\mu, z) \in K.
\end{equation}
\end{lemma}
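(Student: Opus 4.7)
The plan is to prove \eqref{gj-C160''} by the shift-plus-compactness method used in Lemma~\ref{ma-lemma-C10}. Set
$G(\hat\mu, \lambda) := \sum f_i(\lambda)(\mu_i - \lambda_i) + f(\lambda) - z - \sigma$,
so that $\hat\lambda = (\lambda, f(\lambda) - \sigma) \in \hat S^\sigma_{\hat\mu}$ iff $G(\hat\mu, \lambda) \leq 0$, and the hypothesis becomes: $\{G(\hat\mu, \cdot) \leq 0\} \cap \Gamma^{[a,b]}$ is bounded for every $\hat\mu \in K$. A direct calculation with $\hat\mu^\epsilon := (\mu - \epsilon {\bf 1}, z + \epsilon)$ produces the key identity
\[ G(\hat\mu, \lambda) = G(\hat\mu^\epsilon, \lambda) + \epsilon\Big(1 + \sum f_i(\lambda)\Big). \]
Hence \eqref{gj-C160''} will follow immediately (with $\varepsilon = \epsilon$) once we exhibit $\epsilon, R > 0$ such that $G(\hat\mu^\epsilon, \lambda) \geq 0$ for every $\hat\mu \in K$ and every $\lambda \in \Gamma^{[a,b]}$ with $|\lambda| > R$.

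The core task is therefore to upgrade the pointwise compactness to a version uniform in $\hat\mu$, and then extend it to accommodate the shift $\hat\mu \mapsto \hat\mu^\epsilon$. I would proceed by contradiction: if no such $\epsilon, R$ exist, one obtains sequences $\hat\mu_n \in K$, $\epsilon_n \to 0$, and $\lambda_n \in \Gamma^{[a,b]}$ with $|\lambda_n| \to \infty$ and $G(\hat\mu_n^{\epsilon_n}, \lambda_n) < 0$. Passing to a subsequence so that $\hat\mu_n \to \hat\mu_* \in K$, the decomposition
$(\hat\mu_* - \hat\lambda_n) \cdot \nu_{\hat\lambda_n}
= (\hat\mu_n^{\epsilon_n} - \hat\lambda_n) \cdot \nu_{\hat\lambda_n}
 + (\hat\mu_* - \hat\mu_n^{\epsilon_n}) \cdot \nu_{\hat\lambda_n}$
combined with $|\nu_{\hat\lambda_n}| = 1$ places $\hat\lambda_n$ inside the $o(1)$-relaxation of $\hat S^\sigma_{\hat\mu_*}[a,b]$. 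Together with $|\lambda_n| \to \infty$, this will contradict compactness of $\hat S^\sigma_{\hat\mu_*}[a,b]$ once the $o(1)$ error is absorbed by an infinitesimal inward shift of $\hat\mu_*$.

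The main obstacle is exactly this last step: the shift $\hat\mu \mapsto \hat\mu^\epsilon$ \emph{enlarges} $\hat S^\sigma_{\hat\mu}$, so the pointwise hypothesis must be shown to extend to a small thickening of $K$, and the vanishing error converted into a definite gap. I plan to handle this by exploiting the convexity of $\partial \Sigma^\sigma$ (inherited from the concavity of $f$): pointwise compactness of $\hat S^\sigma_{\hat\mu_*}[a,b]$ together with the convexity of $\overline\Sigma^\sigma$ should yield a uniform positive lower bound on $(\hat\mu_* - \hat\lambda) \cdot \nu_{\hat\lambda}$ along the portion of $\partial \Sigma^\sigma$ lying over $\Gamma^{[a,b]}$ outside a sufficiently large ball, since otherwise an asymptotic supporting hyperplane through $\hat\mu_*$ would produce an unbounded sequence inside $\hat S^\sigma_{\hat\mu_*}[a,b]$. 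This uniform escape rate absorbs the $o(1)$ perturbation, closes the contradiction, and yields the required $\epsilon, R$, completing the proof.
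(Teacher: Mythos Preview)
Your reduction via the shift identity $G(\hat\mu,\lambda)=G(\hat\mu^\epsilon,\lambda)+\epsilon(1+\sum f_i)$ is exactly the mechanism the paper uses (see Theorem~\ref{T2.16} and Theorem~\ref{3I-th3} in the Appendix, where the analogous shift $\hat\mu\mapsto\hat\mu-\varepsilon\hat{\bf 1}$ appears), so the overall architecture of your plan is sound and matches the paper at the top level. Where you diverge is in how you intend to produce the ``uniform positive lower bound'' on $(\hat\mu_*-\hat\lambda)\cdot\nu_{\hat\lambda}$ at infinity: you propose a single compactness/contradiction argument, whereas the paper builds this up constructively through a chain of lemmas --- openness of $\mathcal{B}^+_\sigma$ (Lemma~\ref{L2.8}), uniform boundedness of $\hat S^\sigma_{\hat\mu}$ over compact $K$ (Lemma~\ref{L2.9}), and crucially the monotonicity of the segment-maximum functional $\Theta_R(\hat\mu)$ in $R$ (Lemma~\ref{L2.11}), which is what actually forces the tangent-plane distance to stay bounded away from zero as $|\lambda|\to\infty$ (Lemmas~\ref{L2.14}--\ref{L2.15}).

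The gap in your proposal is precisely at this step, and it is not cosmetic. Your heuristic ``otherwise an asymptotic supporting hyperplane through $\hat\mu_*$ would produce an unbounded sequence inside $\hat S^\sigma_{\hat\mu_*}[a,b]$'' is circular as stated: if $(\hat\mu_*-\hat\lambda_n)\cdot\nu_{\hat\lambda_n}\to 0^+$, the tangent planes at $\hat\lambda_n$ approach $\hat\mu_*$ but never place $\hat\lambda_n$ \emph{inside} $\hat S^\sigma_{\hat\mu_*}$, so no direct contradiction with compactness arises. The natural fix --- push $\hat\mu_*$ slightly to $\hat\mu_*^\delta$ so that $(\hat\mu_*^\delta-\hat\lambda_n)\cdot\nu_{\hat\lambda_n}<0$ for large $n$ --- requires you to already know that $\hat\mu_*^\delta$ still enjoys the compactness hypothesis, i.e.\ that $\mathcal{B}^+_\sigma$ is open, which is what you are trying to prove. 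The paper breaks this circularity not by a soft hyperplane argument but by the monotonicity of $\Theta_R$: once $\Theta_{R_0}(\hat\mu)>0$ for some $R_0$, Lemma~\ref{L2.11} guarantees $\Theta_R(\hat\mu)\ge\Theta_{R_0}(\hat\mu)$ for all $R\ge R_0$, and concavity then converts this directly into the uniform lower bound on $D\tilde f(\hat\lambda)\cdot(\hat\mu-\hat\lambda)$. Your contradiction framework can certainly be completed, but you will need either this monotonicity ingredient or an independent proof of the openness of $\mathcal{B}^+_\sigma$ (along the lines of Lemma~\ref{L2.8}); convexity of $\partial\Sigma^\sigma$ alone, combined with pointwise compactness at a single $\hat\mu_*$, does not deliver the escape rate you claim without one of these.
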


In some sense this is a parabolic version of Theorem 2.17 in \cite{Guan14}. Its proof is long but follows similar ideas in  \cite{Guan14}. So we include it in the Appendix for completeness 
and for the reader's convenience.

\begin{remark}
\label{eq2-199}
If $\ul u$ is an admissible {strict} subsolution, i.e. 
\begin{equation}
\label{eq1-11ss}
\begin{aligned}
  f(\lambda [\ul u]) \geq  {\ul u_t + \psi} + \delta
                                              \;\; \mbox{in $M_T$}  \\
 \end{aligned} 
\end{equation}
for some $\delta > 0$, then we can choose $\epsilon > 0$ such that 
$\lambda^{\epsilon} [\ul u] :=\lambda [\ul u] - \epsilon {\bf 1} \in \Gamma$ and 
\begin{equation}
\label{eq1-11ss'}
\begin{aligned}
  f (\lambda^{\epsilon} [\ul u]) \geq  {\ul u_t + \psi} + \frac{\delta}{2}
                                              \;\; \mbox{in $M_T$} . \\
 \end{aligned} 
\end{equation}
By the concavity of $f$ we see that 
\[ \sum f_i (\lambda [u]) (\lambda^{\epsilon}_i [\ul u] - \lambda_i [u])
   \geq  f (\lambda^{\epsilon} [\ul u]) - f (\lambda [u])
   \geq  {\ul u}_t - u_t + \frac{\delta}{2}. \]
Therefore one can derive \eqref{eq2-148} directly from 
Proposition~\ref{eq2-149}. This can be used to prove Theorem~\ref{thm2a} as $\ul u = \varphi^b + A t$ is a 
strict subsolution of equation \eqref{eq1-1a} for any constant 
$A < \inf_{M}  f (\lambda [\varphi^b]) - \sup_{M_T} \psi$. 
\end{remark}

\bigskip

\section{Second order boundary estimates}
\label{3I-B}
\setcounter{equation}{0}

\medskip

Let $u \in C^{3, 1} (\ol{M_T})$ be an admissible solution of 
\eqref{eq1-1a} satisfying 
\eqref{eq1-1IB} and the $C^1$ estimate \eqref{eq2-195}.
In this section we derive \eqref{eq1-14} under the assumptions
\eqref{eq1-3}, \eqref{eq1-4}, \eqref{eq1-0}, \eqref{eq1-13}
and \eqref{eq1-9} on $f$. Clearly we only need to focus
on $\partial_s M_T$. 

For a point $x_0\in \partial M$ we shall choose smooth orthonormal 
local frames $e_1, \ldots, e_n$ around $x_0$ such that 
$e_n$, when restricted to $\partial M$,  is the interior unit normal to $\partial M$. 
By the boundary condition
$u = \varphi^s$ on $\partial_s M_T$ we obtain 
\begin{equation}
|\nabla_{\alpha\beta} u (x_0, t_0) |\leq C, \;\; \forall \, 1\leq\alpha,
\beta<n, \;\; \forall \, 0 \leq t \leq T.
 \end{equation}

Let $\rho (x)$ and  $d (x)$ denote the distances from $x \in \bM$ to
$x_0$ and $\partial M$, respectively. 
Let $M_T^\delta = \{(x, t) \in M_T: \rho(x) < \delta\}$, 
and $\partial M_T^\delta$ be the parabolic boundary of $M_T^\delta$,
\[ \partial M_T^\delta = \ol{M_T^\delta} \setminus M_T^\delta. \]
We fix $\delta_0 > 0$ sufficiently small such that both $\rho$ and $d$
are smooth in $M_T^{\delta_0}$. Let $\mathcal{L}$ denote the linear
parabolic operator 
\[ \mathcal{L} w = F^{ij} \nabla_{ij} w - w_t, \]
and 
\begin{equation}
\label{eq3-3}
 \varPsi
   = A_1 v + A_2 \rho^2 - A_3 \sum_{l< n} |\nabla_l (u - \varphi)|^2
\end{equation}
where
\begin{equation}
\label{eq3-4} 
v = u - \ul{u} + s d - \frac{N d^2}{2}
\end{equation}
and $\ul u \in C^{2, 1} (\ol{M_T})$ is an admissible 
subsolution satisfying \eqref{eq1-11'} and \eqref{eq1-11b}.

\begin{lemma}
\label{ma-lemma-E10} 
Assume \eqref{eq1-3}, \eqref{eq1-4}, \eqref{eq1-0}, 
\eqref{eq1-13} hold
and $\ul u$ satisfies \eqref{eq1-11'} and \eqref{eq1-11b}.
Then for constant $K > 0$, there exist uniform positive 
constants $s$, $\delta$ sufficiently small, 
and $A_1$, $A_2$, $A_3$, $N$
sufficiently large such that 
$\varPsi \geq K (d + \rho^2)$ in $\ol{M_T^{\delta}}$ and 
\begin{equation}
\begin{aligned}
\label{eq3-5} 
\mathcal{L} \varPsi 
   \leq \,& - K \Big(1 + \sum f_i |\lambda_i|
                 + \sum f_i\Big)  \;\; \mbox{in $M_T^{\delta}$}.
\end{aligned}
\end{equation}
\end{lemma}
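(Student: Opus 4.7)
My plan is to split the argument into two pieces: the positivity $\varPsi \geq K(d+\rho^2)$, and the coercive differential inequality \eqref{eq3-5}. For positivity, conditions \eqref{eq1-11'} and \eqref{eq1-11b} together with the parabolic comparison principle give $u \geq \ul u$ throughout $\ol{M_T}$, so $v \geq sd - Nd^2/2 \geq sd/2$ once $\delta \leq s/N$. Since $u - \varphi \equiv 0$ on $\partial_s M_T$, the tangential derivatives $w_l := \nabla_l(u - \varphi)$ for $l < n$ vanish on $\partial M$, so $|w_l|^2 \leq C d$ in $M_T^\delta$. Choosing $A_2 \geq K+1$ and then $A_1$ sufficiently larger than $A_3$ (after $s$ and $\delta$ are fixed) yields the lower bound.

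To prove \eqref{eq3-5}, the plan is to decompose $\mathcal{L}\varPsi = A_1\mathcal{L}v + A_2\mathcal{L}(\rho^2) - A_3\sum_{l<n}\mathcal{L}(w_l^2)$ and analyze each piece separately. Concavity of $F$ and \eqref{eq1-11'} immediately give $\mathcal{L}(u - \ul u) \leq 0$, and Lemma~\ref{ma-lemma-C10}, with $\beta := \tfrac{1}{2}\mathrm{dist}(\nu_{\lambda[\ul u]}, \partial\Gamma_n)$, enables a dichotomy at each point: either (I) $|\nu_{\lambda[u]} - \nu_{\lambda[\ul u]}| \geq \beta$, giving $\mathcal{L}(u - \ul u) \leq -\varepsilon(1+\sum f_i)$, or (II) $|\nu_{\lambda[u]} - \nu_{\lambda[\ul u]}| < \beta$, in which case every $f_i \geq c\sum f_j$. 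For the $d$-contribution, $\mathcal{L}(sd - Nd^2/2) = (s - Nd)F^{ij}\nabla_{ij}d - NF^{ij}\nabla_i d \nabla_j d$, and the negative quadratic is bounded above by $-cNF^{nn}$ near $\partial M$, which in Case II becomes $\leq -cN\sum f_i$. The $\rho^2$-term is harmless: $\mathcal{L}(\rho^2) \leq C(1+\sum f_i)$.

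The decisive term is $-A_3\mathcal{L}(w_l^2) = -2A_3 w_l\mathcal{L}w_l - 2A_3 F^{ij}\nabla_i w_l\nabla_j w_l$. Differentiating \eqref{eq1-1a} once and applying the commutator \eqref{eq2-6} shows $|\mathcal{L}(\nabla_l u)| \leq C(1+\sum f_i)$, hence $|w_l\mathcal{L}w_l| \leq C(1+\sum f_i)$. The gradient-squared piece is the source of the $\sum f_i|\lambda_i|$ control: writing $\nabla_i w_l = U_{il} - \chi_{il} - \nabla_{il}\varphi$ and applying $(X-Y)^2 \geq \tfrac{1}{2}X^2 - Y^2$ gives $F^{ij}\nabla_i w_l\nabla_j w_l \geq \tfrac{1}{2}F^{ij}U_{il}U_{jl} - C\sum f_i$. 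Summing over $l < n$ and using $F^{ij}\sum_l U_{il}U_{jl} = F^{ij}(U^2)_{ij} = \sum f_i\lambda_i^2$, combined with \eqref{eq1-13} (which, on the set $f(\lambda) = u_t+\psi \in [a,b]$, gives $\sum f_i\lambda_i \geq -K_1(1+\sum f_i)$) and the pointwise bound $f_i|\lambda_i| \leq f_i\lambda_i^2 + f_i$, one extracts $\sum_{l<n}F^{ij}U_{il}U_{jl} \geq c\sum f_i|\lambda_i| - C\sum f_i$ modulo a single excluded direction that is absorbed by $F^{nn}$ from the $d$-part. To close, choose $A_3$ first, large enough to force the $K\sum f_i|\lambda_i|$ term; then $A_1 N$ large enough so that in Case I $A_1\mathcal{L}(u - \ul u) \leq -A_1\varepsilon(1+\sum f_i)$ and in Case II $-A_1 N F^{ij}\nabla_i d\nabla_j d \leq -cA_1 N\sum f_i$ dominate all remaining positive $\sum f_i$ contributions; and finally fix $A_2$ and small $s,\delta$. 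The main obstacle is the extraction of $\sum f_i|\lambda_i|$ from the tangential sum while respecting the sign constraint from \eqref{eq1-13}, and correctly ordering the constants so that Case II is handled without help from Lemma~\ref{ma-lemma-C10}.
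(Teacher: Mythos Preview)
Your overall architecture --- the dichotomy based on $|\nu_{\lambda[u]} - \nu_{\lambda[\ul u]}|$, and the role of each piece of $\varPsi$ --- matches the paper's, but there are two genuine gaps that make the argument fail as written.

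First, on a Riemannian manifold the bound $|\mathcal{L}(\nabla_l u)| \leq C(1+\sum f_i)$ is incorrect. The scalar function $w_l = e_l(u-\varphi)$ has Hessian $\nabla_{ij} w_l$ which differs from the covariant third derivative $(\nabla^3 u)_{ijl}$ by terms of the form $\Gamma^m_{\cdot\cdot}\nabla^2_{\cdot m} u$, because the boundary-adapted frame (with $e_n$ normal along $\partial M$) is not parallel throughout $M_T^\delta$. These corrections contribute $\sum f_i|\lambda_i|$, so the true estimate is \eqref{eq3-1}: $|\mathcal{L} w_l| \leq C(1+\sum f_i|\lambda_i|+\sum f_i)$. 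This matters, because the resulting bad term $C A_3\sum f_i|\lambda_i|$ carries the same factor $A_3$ as the good quadratic term and cannot be passed off to $A_1$.

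Second, the step ``$\sum_{l<n}F^{ij}U_{il}U_{jl} \geq c\sum f_i|\lambda_i| - C\sum f_i$ modulo a single excluded direction absorbed by $F^{nn}$'' conflates two frames. The index $l$ runs over the boundary frame, while $f_i, \lambda_i$ live in the eigenframe of $U$; since $U$ is generally not diagonal in the boundary frame, the ``missing'' direction has no relation to $e_n$ or to $F^{nn}$. The paper closes this with two nontrivial ingredients you do not invoke: Proposition~2.19 of \cite{Guan14}, giving $\sum_{l<n}F^{ij}U_{il}U_{jl} \geq \tfrac12\sum_{i\neq r}f_i\lambda_i^2$ for some eigen-index $r$, and the $\epsilon$-inequality \eqref{eq3-19},
\[
\sum f_i|\lambda_i| \leq \epsilon\sum_{i\neq r}f_i\lambda_i^2 + \frac{C}{\epsilon}\sum f_i + C,
\]
valid for \emph{every} $r$ (proved using \eqref{eq1-13} when $\lambda_r<0$ and concavity when $\lambda_r>0$). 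Your cruder bound $f_i|\lambda_i| \leq f_i\lambda_i^2 + f_i$ has no small parameter, so once the first gap is corrected it cannot absorb the $C A_3\sum f_i|\lambda_i|$ term into $\tfrac{A_3}{2}\sum_{i\neq r}f_i\lambda_i^2$. Finally, in your Case~II the paper further splits into $|\lambda|\geq R$ and $|\lambda|< R$ (using \eqref{eq1-0} and a uniform lower bound on $\sum f_i$) to produce the constant ``$-K\cdot 1$'' in \eqref{eq3-5}; this is also missing from your sketch.
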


\begin{proof} 
This is parabolic version of Lemma~3.1 in \cite{Guan}. 
Since there are some substantial differences in several places,
for completeness and reader's convenience we include 
a detailed proof.  

First we note that $\mathcal{L}  (u - \ul{u}) \leq 0$
by the concavity of $f$ and since $\ul{u}$ is a subsolution, 
and by \eqref{eq2-4} , 
\begin{equation}
\label{eq3-1}
\begin{aligned}
|\mathcal{L} \nabla_k (u - \varphi)| 
    \leq \,& C \Big(1 + \sum f_i |\lambda_i| + \sum
f_i\Big),
 \;\; \forall \; 1 \leq k \leq n.
\end{aligned}
\end{equation}
It follows that
\begin{equation}
\label{eq3-6}
\begin{aligned}
 \sum_{l < n} \mathcal{L} |\nabla_l (u - \varphi)|^2 
 \geq \, \sum_{l < n} F^{ij} U_{i l} U_{j l}
         - C \Big(1 + \sum f_i |\lambda_i| + \sum f_i\Big).
\end{aligned}
\end{equation}
By Proposition~2.19 in \cite{Guan14} there exists an index $r$ such
that
\begin{equation}
\label{eq3-7} \sum_{l < n} F^{ij} U_{i l} U_{j l}
     \geq \frac{1}{2} \sum_{i \neq r} f_i \lambda_i^2.
\end{equation}

At a fixed point $(x, t)$, denote
${\mu} = \lambda (\nabla^2 \ul u +  \chi)$
and $\lambda = \lambda (\nabla^2 u + \chi)$.
As in Section~\ref{3I-C} 
we consider two cases separately:
 {\bf (a)} $|\nu_{\mu}- \nu_{\lambda}| < \beta$
 and
{\bf (b)} $|\nu_{\mu}- \nu_{\lambda}| \geq \beta$, 
where $\beta$ is given  in \eqref{eq2-16}.

Case {\bf (a)} $|\nu_{\mu}- \nu_{\lambda}| < \beta$.
We have by \eqref{eq2-22}
\begin{equation}
\label{eq3-9}
 f_i \geq \frac{\beta}{\sqrt{n}} \sum f_k,
    \;\; \forall \, 1 \leq i \leq n.
\end{equation}
We next show that this implies the following inequality for any index $r$
\begin{equation}
\label{eq3-8}
\sum_{i \neq r} f_i \lambda_i^2 
     \geq c_0 \sum f_i \lambda_i^2 - C_0 \sum f_i
\end{equation}
for some $c_0, C_0 > 0$. 

Since $\sum \lambda_i \geq 0$, we see that 
\begin{equation}
\label{eq3-10}
 \sum_{\lambda_i < 0} \lambda_i^2
     \leq \Big(- \sum_{\lambda_i < 0} \lambda_i\Big)^2
    \leq n \sum_{\lambda_i > 0} \lambda_i^2.
\end{equation}
Therefore, by \eqref{eq3-9} and \eqref{eq3-10} we obtain 
if $\lambda_r < 0$, 
\[ f_r \lambda_r^2 \leq n f_r \sum_{\lambda_i > 0} \lambda_i^2
    \leq \frac{n \sqrt{n}}{\beta} \sum_{\lambda_i > 0} f_i \lambda_i^2. \]

On the other hand, by the concavity of $f$ and assumption \eqref{eq1-0}
we have  
\begin{equation}
\label{eq3-101}
\sum f_i (b - \lambda_i) \geq f (b {\bf 1}) - f (\lambda)
= f (b {\bf 1}) - u_t - \psi \geq 1 
\end{equation}
for $b > 0$ sufficiently large. 
It follows that if $\lambda_r > 0$, 
\[ f_r \lambda_r \leq b \sum f_i - \sum_{\lambda_i < 0} f_i \lambda_i. \]
By \eqref{eq3-9} and Schwarz inequality, 
\[  \begin{aligned}
  \frac{\beta f_r \lambda_r^2}{\sqrt{n}} \sum f_k
  \leq \,& f_r^2 \lambda_r^2
  \leq   2 b^2 \Big(\sum f_i\Big)^2
        + 2 \sum_{\lambda_k < 0} f_k  \sum_{\lambda_i < 0} f_i \lambda_i^2 \\
  \leq \,& 2 \Big(\sum_{\lambda_i < 0} f_i \lambda_i^2
              + b^2 \sum f_i \Big) \sum f_k.
\end{aligned}    \]
This finishes the proof of \eqref{eq3-8}.

Letting  $b = n |\lambda|$ in \eqref{eq3-101}, we see that 
\begin{equation} 
\label{eq3-11'}
(n+1) |\lambda| \sum f_i \geq 
\sum f_i (n |\lambda| - \lambda_i)
\geq f (n |\lambda| {\bf 1}) - f (\lambda) \geq 1,
\end{equation}
and consequentley by \eqref{eq3-9},
\begin{equation} \label{eq3-11}
\sum f_i \lambda_i^2 \geq \frac{\beta |\lambda|^2}{\sqrt{n}} \sum f_i 
                    \geq \frac{\beta |\lambda|}{(n+1)\sqrt{n}},
\end{equation}
provided that $|\lambda| \geq R$ for $R$ sufficiently large.

It now follows from 
\eqref{eq3-6}, \eqref{eq3-7}, \eqref{eq3-8}, \eqref{eq3-11} and 
Schwartz inequality that when $|\lambda|\geq R$, 
\begin{equation}
\label{eq3-12}
\begin{aligned}
\sum_{l < n} \mathcal{L} |\nabla_l (u - \varphi)|^2 
\geq \,& c_1 \sum f_i \lambda_i^2 + 2 c_1 |\lambda| - C - C_1 \sum f_i.
\end{aligned}
\end{equation}
for some $c_1, C_1 > 0$. We now fix $R \geq C/c_1$. 

Turning to the fucntion $v$, we note that by \eqref{eq3-9},
\begin{equation}
\label{eq3-13}
\begin{aligned}
\mathcal{L} v  \leq \,& \mathcal{L} (u- \ul u) + C (s+N d) \sum F^{ii} 
        - N  F^{ij} \nabla_{i} d \nabla_{j} d \\
\leq \,&  \Big(C (s+N d) - \frac{\beta N}{\sqrt{n}}\Big) \sum F^{ii}  
\end{aligned}
\end{equation}
since $\mathcal{L}  (u - \ul{u}) \leq 0$ and $|\nabla d| \equiv 1$.
For $N$ sufficiently large we have
\begin{equation}
\label{eq3-14} \mathcal{L} v \leq - \sum f_i
   \;\; \mbox{in $M_T^{\delta}$}
\end{equation}
and therefore, in view of \eqref{eq3-12} and \eqref{eq3-14}, 
\begin{equation}
\label{eq3-15}
\begin{aligned}
\mathcal{L} \varPsi \leq \,&
       - A_3 c_1 \Big(|\lambda| + \sum f_i \lambda_i^2\Big)
        + ( - A_1 + C A_2 + C_1 A_3) \sum f_i. 
\end{aligned}
\end{equation}
when $|\lambda| \geq R$, for any $s \in (0,1]$ as long as $\delta$ is 
sufficiently small. 
From now on $A_3$ is fixed such that $A_3 c_1 R \geq K$, so 
$A_3 \geq C K/c_1^2$. 

Suppose now that $|\lambda| \leq R$. 
By \eqref{eq1-3} and \eqref{eq1-4} we have 
\begin{equation}
\label{eq3-155}
 \begin{aligned}
 2R \sum f_i 
    \geq \,& \sum f_i \lambda_i + f (2 R {\bf 1}) - f (\lambda) \\
    \geq \,& - R \sum f_i + f (2 R {\bf 1}) - f (R {\bf 1}). 
\end{aligned}
\end{equation}

Therefore,
\[ \sum f_i \geq \frac{f ( 2 R {\bf 1}) - f (R {\bf 1})}{3R} 
     \equiv C_R > 0. \]
It follows from \eqref{eq2-22} that there is a uniform lower bound
\begin{equation}
\label{eq3-16}
  f_i \geq \frac{\beta}{\sqrt{n}} \sum f_k \geq  \frac{\beta C_R}{\sqrt{n}},   \;\; \forall \, 1 \leq i \leq n.
\end{equation}
Consequently, since $|\nabla d| = 1$, 
\[ F^{ij} \nabla_{i} d \nabla_{j} d \geq \frac{\beta}{2 \sqrt{n}}
\Big(C_R + \sum f_i\Big). \]
From \eqref{eq3-13} we see that when $\delta$ is sufficiently small 
and $N$ sufficiently large,
\begin{equation}
\label{eq3-17} 
\mathcal{L} v \leq - \Big(1 + \sum f_i\Big)
   \;\; \mbox{in $M_T^{\delta}$}.
\end{equation}
Combining \eqref{eq3-6}, \eqref{eq3-7}, \eqref{eq3-8}, \eqref{eq3-17}
yields
\begin{equation}
\label{eq3-15'}
\begin{aligned}
\mathcal{L} \varPsi \leq \,&
       - A_3 c_1 \sum f_i \lambda_i^2
        + (- A_1 + C A_2 + C A_3) \sum f_i - A_1 + C A_3
\end{aligned}
\end{equation}
We now fix $N$ such that \eqref{eq3-14} holds when $|\lambda| > R$
while \eqref{eq3-17} holds when $|\lambda| \leq R$, for any $s$
and $\delta$ sufficiently small.

Case {\bf (b)} $|\nu_{\mu}- \nu_{\lambda}| \geq \beta$.
It follows from Lemma~\ref{ma-lemma-C10} that, for some $\varepsilon > 0$, 
\[ \mathcal{L} (\ul u - u)  
    \geq \sum f_i (\mu_i - \lambda_i) - (\ul u - u)_t
    \geq \varepsilon \Big(1 + \sum f_i\Big) \]
By \eqref{eq3-13}, we may fix $s$ and $\delta$ suffieicently 
small such that $v \geq 0$ on $\ol{M_T^\delta}$
and 
\begin{equation}
\label{eq3-18} 
\mathcal{L} v \leq - \frac{\varepsilon}{2} \Big(1 + \sum f_i\Big)
   \;\; \mbox{in $M_T^{\delta}$}.
\end{equation}

Finally, we choose $A_2$ large such that 
\[ (A_2 - K) \rho^2 \geq A_3 \sum_{l< n} |\nabla_l (u - \varphi)|^2
    \;\; \mbox{on $\partial M_T^{\delta}$}, \]
and then fix $A_1$ sufficiently large so that \eqref{eq3-5} holds; 
in case {\bf (a)} this follows
from \eqref{eq3-15} when $|\lambda| > R$, and from
\eqref{eq3-15'} 
when $|\lambda| \leq R$, while  
in case {\bf (b)} we obtain \eqref{eq3-5} from
\eqref{eq3-6}, \eqref{eq3-7}, \eqref{eq3-18} and the following
inequality 
\begin{equation}
\label{eq3-19}
 \sum f_i |\lambda_i| \leq \epsilon \sum_{i \neq r} f_i \lambda_i^2
  + \frac{C}{\epsilon} \sum f_i + C
\end{equation}
for any $\epsilon > 0$ and index $r$, which is a consequence of
\eqref{eq1-3}, \eqref{eq1-4} 
and \eqref{eq1-13}. 
For the proof of \eqref{eq3-19}, we consider two cases. 
If $\lambda_r < 0$ then, by \eqref{eq1-13}
\[ \begin{aligned}
   \sum f_i |\lambda_i|
   = \,& 2\sum_{\lambda_i > 0} f_i \lambda_i - \sum  f_i \lambda_i \\
\leq  \,& \epsilon \sum_{\lambda_i > 0} f_i \lambda_i^2 +
\frac{1}{\epsilon} \sum_{\lambda_i > 0} f_i + K_1 \sum f_i + K_1. 
  \end{aligned} \]
 If $\lambda_r > 0$, we have by the
concavity of $f$, 
\[ \begin{aligned}
   \sum f_i |\lambda_i|
    = \,& \sum  f_i \lambda_i - 2\sum_{\lambda_i < 0} f_i \lambda_i \\
\leq \,& \epsilon \sum_{\lambda_i < 0} f_i \lambda_i^2
            + \frac{1}{\epsilon} \sum_{\lambda_i < 0} f_i + \sum f_i 
            + f (\lambda) - f ({\bf 1}).
     \end{aligned} \]
This proves \eqref{eq3-19}.
\end{proof}

Applying Lemma~\ref{ma-lemma-E10}, by \eqref{eq3-1} we 
immediately 
derive a bound for the mixed tangential-normal
derivatives at any point $(x_0, t_0) \in \partial M_T$,
\begin{equation}
\label{eq3-20} |\nabla_{n\alpha} u (x_0, t_0)| \leq C, \;\; \forall
\; \alpha < n
\end{equation}
It remains to establish the double normal derivative estimate
\begin{equation}
\label{eq3-21}
 |\nabla_{n n} u (x_0, t_0)| \leq C.
\end{equation}
As in \cite{Guan14} and \cite{Guan} we use an idea originally due to
Trudinger \cite{Trudinger95}.

For $(x, t) \in \partial_s M_T$, let $\tilde{U} (x, t)$ be the
restriction to $T_x \partial M$ of $U (x, t)$, viewed as a bilinear map 
on the tangent space 
of $M$ at $x$, 
and let $\lambda' (\tilde{U})$ denote the eigenvalues of
$\tilde{U}$ with respect to the induced metric on $\partial M$. 
We show next that there are uniform positive constants $c_0, R_0$ 
such that, for all $R > R_0$,
$(\lambda' (\tilde{U} (x, t)), R) \in \Gamma$ and
\begin{equation}
\label{eq3-22} 
f (\lambda' (\tilde{U} (x, t)), R) \geq
f(\lambda(U(x, t))) + c_0,
\;\; \forall  \; 0 \leq t \leq T, \; \forall \; x \in \partial M.
\end{equation}
It is known that \eqref{eq3-22} implies \eqref{eq3-21}; see e.g. \cite{Guan14}.

For $R > 0$ sufficiently large, let 
\[  \begin{aligned} 
  m_R := \,& \min_{\partial_s M_T} 
                    [f (\lambda' (\tilde{U}), R) -  f (\lambda (U)],  \\
   c_R := \,& \min_{\partial_s M_T}
                   [f (\lambda' (\tilde{\ul U}), R) - f (\lambda{(\ul U})]. 
    \end{aligned} \]
Note that $(\lambda' (\tilde{U} (x, t)), R) \in \Gamma$ and
$(\lambda' (\tilde{U} (x, t)), R) \in \Gamma$ for all 
$(x, t) \in \partial_s M_T$ for all $R$ large, and it is clear that 
both $m_R$ and $c_R$ are increasing in $R$. 
We wish to show that for some uniform $c_0 > 0$,  
\[  \tilde{m} := \lim_{R \rightarrow \infty} m_R \geq c_0. \]

Assume $\tilde{m} < \infty$ (otherwise we are done) and fix $R > 0$  
such that $c_R > 0$ and $m_R \geq \frac{\tilde{m}}{2}$. 
Let $(x_0, t_0) \in \partial_s M_T$ such that 
$m_R = f (\lambda' (\tilde{U} (x_0, t_0)), R)$.
Choose local orthonormal frames $e_1, \ldots, e_n$ around $x_0$ as before such 
that $e_n$ is the interior normal to $\partial M$ along the boundary and 
$U_{\alpha \beta} (x_0, t_0)$ ($1 \leq \alpha, \beta \leq n-1$) is diagonal. 
Since $u - \ul u = 0$ on $\partial_s M_T$, we have 
\begin{equation}
\label{eq3-24}
 U_{\alpha {\beta}} - \ul{U}_{\alpha {\beta}}
    = - \nabla_n (u - \ul{u}) \sigma_{\alpha {\beta}}
\;\; \mbox{on $\partial_s M_T$}.
\end{equation}
where $\sigma_{\alpha {\beta}} = \langle \nabla_{\alpha} e_{\beta}, e_n \rangle$. 
Similarly, 
\begin{equation}
\label{eq3-24'}
 U_{\alpha {\beta}} - \nabla_{\alpha {\beta}} \varphi - \chi_{\alpha {\beta}} \varphi
    = - \nabla_n (u - \varphi) \sigma_{\alpha {\beta}}
\;\; \mbox{on $\partial_s M_T$}.
\end{equation}

For an $(n-1) \times (n-1)$ symmetric matrix $\{r_{\alpha, \beta}\}$ with 
$(\lambda' (\{r_{\alpha, \beta}\}), R) \in \Gamma$, define
\[ \tF [r_{\alpha \beta}] := f (\lambda' (\{r_{\alpha, \beta}\}), R) \]
and  
\[ \tF^{\alpha\beta}_0 
  = \frac{\partial \tF}{\partial r_{\alpha\beta}}[U_{\alpha\beta}( x_0, t_0 )]. \] 
We see that $\tF$ is concave since so is $f$, and therefore by
\eqref{eq3-24},  
\[   \nabla_n (u - \ul{u}) (x_0, t_0) \tF^{\alpha {\beta}}_0 
               \sigma_{\alpha {\beta}} (x_0)
      \geq \tF[\ul{U}_{\alpha {\beta}} (x_0, t_0)] 
            - \tF[U_{\alpha {\beta}} (x_0, t_0)]
 \geq c_R - m_R. \]

Suppose that
\[ \nabla_n (u - \ul{u}) (x_0, t_0)
    \tF^{\alpha {\beta}}_0 \sigma_{\alpha {\beta}} (x_0)
   \leq \frac{c_R}{2} \]
then $m_R \geq c_R/2$ and we are done.
So we shall assume
\[ \nabla_n (u - \ul{u}) (x_0, t_0) 
   \tF^{\alpha {\beta}}_0 \sigma_{\alpha {\beta}} (x_0) > \frac{c_R}{2}. \]
Consequently, 
\begin{equation}
\label{eq3-25} 
\tF^{\alpha\beta}_0 \sigma_{\alpha\beta}(x_0) \geq
\frac{c_R}{2 \nabla_n (u - \ul{u}) (x_0, t_0)} \geq 2 \epsilon_1 c_R
\end{equation}
for some constant $\epsilon_1 > 0$ depending on 
$\max_{\partial_s M_T} |\nabla u|$.   
By continuity we may assume 
$\eta := \tF^{\alpha\beta}_0 \sigma_{\alpha\beta} \geq \epsilon_1 c_R$ 
on $\ol{M_T^{\delta}}$ by requiring $\delta$ small (which may 
depend on the fixed $R$).
Define in $M_T^{\delta}$, 
\begin{equation}
\begin{aligned}
  \varPhi = \,& - \nabla_n (u - \varphi) + \frac{Q}{\eta}
\end{aligned}
\end{equation}
where 
\[ Q = \tF^{\alpha {\beta}}_0 
   (\nabla_{\alpha\beta} \varphi + \chi_{\alpha\beta} 
        - U_{\alpha\beta} (x_0, t_0))
        - \ul{u}_t - \psi + u_t (x_0, t_0) + \psi (x_0, t_0) \]
is smooth in $M_T^{\delta}$.
By \eqref{eq3-1} we have 
\begin{equation}
\label{eq3-26}
 \begin{aligned}
\mathcal{L} \varPhi 
  \leq \, & - \mathcal{L} \nabla_n u + C \Big(1 + \sum F^{ii}\Big) 
  \leq   C \Big(1 + \sum f_i |\lambda_i| + \sum f_i\Big).
\end{aligned}
\end{equation}

From \eqref{eq3-24'} we see that $\varPhi (x_0, t_0) = 0$ and 
\begin{equation}
\label{eq3-25a} 
\varPhi \geq 0 \;\; \mbox{on $\ol{M_T^\delta} \cap \partial_s M_T$},
\end{equation}
 since for $(x, t) \in \partial_s M_T$, by the concavity of $\tF$,
\[ \begin{aligned}
\tF_0^{\alpha\beta}( U_{\alpha\beta}(x, t) - U_{\alpha\beta}(x_0, t_0)) 
  \geq \,&  \tF (\tU (x, t)) - \tF (\tU (x_0, t_0)) \\
      = \,& \tF (\tU (x, t)) - m_R - u_t (x_0, t_0) - \psi (x_0, t_0) \\
\geq \,& \psi (x, t) + u_t (x, t)  - u_t (x_0, t_0) - \psi (x_0, t_0).
\end{aligned} \]
On the other hand, 
on $\partial_b M_T^{\delta}$ we have 
$\nabla_n (u - \varphi) = 0$ and therefore, by \eqref{eq3-25a}, 
\begin{equation}
\label{eq3-25b} 
\begin{aligned}
\varPhi (x, 0) \geq \varPhi (\hat{x}, 0) - C d (x) \geq - C d (x), 
  \end{aligned} 
\end{equation}
where $C$ depends on $C^1$ bounds of 
$\nabla^2 \varphi (\cdot, 0)$, 
$\ul u_t  (\cdot, 0)$, $\psi  (\cdot, 0)$ on $\bM$, and 
$\hat{x} \in \partial M$ satisfies $d (x) = \mbox{dist} (x, \hat{x})$
for $x \in M$; when $d (x)$ is sufficiently small, $\hat{x}$ is unique. 

Finally, note that $|\varPhi| \leq C$ in $M_T^{\delta}$. 
So we may apply Lemma~\ref{ma-lemma-E10} to derive
$\varPsi + \varPhi \geq 0$ on $\partial M_T^{\delta}$ and
\begin{equation}
\label{eq3-27} 
\mathcal{L} (\varPsi + \varPhi) \leq 0 \;\; \mbox{in $M_T^{\delta}$}
\end{equation}
for $A_1$, $ A_2$, $A_3$ sufficiently large. 
By the maximum principle, $\varPsi +  \varPhi \geq 0$ in
$M_T^{\delta}$. This gives
$\nabla_n \varPhi (x_0, t_0) \geq - \nabla_n \varPsi (x_0, t_0) \geq -C$
since $\varPhi + \varPsi = 0$ at $(x_0, t_0)$, and therefore, 
$\nabla_{nn} u (x_0, t_0) \leq C$. 

Consequently, we have obtained {\em a priori} bounds for all 
second derivatives of $u$ at $(x_0, t_0)$. 
It follows that $\lambda (U (x_0, t_0))$ is contained in a 
compact subset (independent of $u$) of $\Gamma$ by
assumptions \eqref{eq1-5}. Therefore,
\[ c_0 \equiv \frac{f  (\lambda (U (x_0, t_0)) + R \ve_n)  
        - f (\lambda (U (x_0, t_0)))}{2} > 0 \]
where $\ve_n = (0, \dots, 0, 1) \in \bfR^n$.
By Lemma~1.2 in \cite{CNS3} we have 
\[ \begin{aligned}
    \tm \geq \,& m_{R'}
\geq f  (\lambda (U (x_0, t_0)) + R' \ve_n) - c_0 - f (\lambda (U (x_0, t_0))) 
     \geq c_0  
\end{aligned} \]
for $R' \geq R$ sufficiently large.
 The proof of \eqref{eq1-14} in Theorem~\ref{thm2} is complete.

\begin{remark}
\label{remak-3.10}
When $M$ is a bounded smooth in $\bfR^n$, one can make use of 
an identity in \cite{CNS3}, and modify the operator $\mathcal L$,
to derive the boundary estimates without 
using assumption~\eqref{eq1-13}. 
We omit the proof here since it is similar to the elliptic case in \cite{Guan} 
which we refer the reader to for details.
\end{remark}

\bigskip

\section{Existence and $C^1$ estimates}
\label{E}
\setcounter{equation}{0}

\medskip

In order to prove Theorem~\ref{thm1} it remains to derive 
the $C^1$ estimate
\begin{equation}
|u|_{C^0 (\ol{M_T})} 
+ \max_{\bM \times [t_0, T]} (|\nabla u| + |u_t|) \leq C
\end{equation} 
 for any $t_0 \in (0, T)$, where $C$ may depend on $t_0$.
Indeed, by assumption \eqref{eq1-5} we see that equation~\eqref{eq1-1} 
becomes uniformly parabolic once the $C^{2,1}$ estimate
\[ |u|_{C^{2,1} (\bM \times [t_0, T])} \leq C \]
is established, which yields 
$|u|_{C^{2+\alpha,1+\alpha/2} (\bM \times [t_0, T])} \leq C$
by Evans-Krylov theorem~\cite{Evans82, Krylov} (see e.g. \cite{Lieberman}).
Higher order estimates now
follow from the classical Schauder theory of linear parabolic
equations, and one obtains a smooth admissible solution in 
$0 \leq t \leq T$ by the short time existence and continuation. 
We refer the reader to \cite{Lieberman} for details.

Let $h \in C^2 (\bM_T)$ be the solution of 
$\Delta h + \tr \chi = 0$ in $\bM_T$ 
with $h = \varphi$ on $\partial M_T$. 
By the maximum principle we have $\ul u \leq u \leq h$ which gives
a bound
\begin{equation}
|u|_{C^0 (\ol{M_T})} + \max_{\partial M_T} |\nabla u| \leq C. 
\end{equation} 

For the bound of $u_t$ we have the following maximum principle.

\begin{lemma}
\label{lemma-E10}
\begin{equation}
\label{eq2-a}
 |u_t (x, t)| \leq \max_{\partial M_T} |u_t| + t \sup_{M_T} |\psi_t|, 
     \;\; \forall \, (x, t) \in \bM_T
\end{equation}
Moreover, if there is a convex function in $C^2 (\bM)$ 
then 
\begin{equation}
\label{eq2-ab}
 \sup_{M_T} |u_t| \leq \max_{\partial M_T} |u_t | 
    + C \sup_{M_T} |\nabla^2 \psi|
\end{equation}
where $C$ is independent of $T$.
\end{lemma}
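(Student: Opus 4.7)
The plan is to observe that $v := u_t$ satisfies a linear parabolic equation obtained by differentiating \eqref{eq1-1a} in $t$. Since $\chi$ is time-independent, differentiating $F(U) = u_t + \psi$ in $t$ and using that the spatial connection commutes with $\partial_t$ yields
\[ F^{ij} \nabla_{ij} u_t - u_{tt} = \psi_t, \qquad \text{i.e.,}\qquad \mathcal{L} u_t = \psi_t, \]
where $\mathcal{L} w := F^{ij}\nabla_{ij} w - w_t$ is the linear parabolic operator from Section~\ref{3I-B}. Because $(F^{ij})$ is positive definite, $\mathcal{L}$ satisfies the weak parabolic maximum principle: any subsolution attains its maximum on the parabolic boundary $\partial M_T$.

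To prove \eqref{eq2-a}, set $K := \sup_{M_T}|\psi_t|$ and introduce the barriers
\[ h^{\pm}(x,t) := \pm u_t(x,t) - tK. \]
Then $\mathcal{L} h^{\pm} = \pm\psi_t + K \geq 0$, so $h^{\pm}$ attains its maximum on $\partial M_T$. On $\partial_b M_T$ we have $h^{\pm} = \pm u_t|_{t=0}$, and on $\partial_s M_T$ we have $h^{\pm} \leq |u_t|$ since $tK \geq 0$. Thus $h^{\pm}(x,t) \leq \max_{\partial M_T}|u_t|$ pointwise on $\ol{M_T}$, which rearranges to \eqref{eq2-a}.

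To prove \eqref{eq2-ab}, combine $\mathcal{L} u_t = \psi_t$ with $\mathcal{L}\psi = F^{ij}\nabla_{ij}\psi - \psi_t$ to cancel the $\psi_t$ term, obtaining the key identity
\[ \mathcal{L}(u_t + \psi) = F^{ij}\nabla_{ij}\psi, \]
whose right-hand side involves only the spatial Hessian of $\psi$. Let $w \in C^2(\bM)$ be the given convex function; by compactness of $\bM$ and strict convexity we may assume $\nabla^2 w \geq c_0\, g$ for some $c_0 > 0$. Choose
\[ A := c_0^{-1}\sup_{M_T}|\nabla^2 \psi|. \]
Since $w$ is independent of $t$, the pointwise tensor inequality $A\nabla^2 w \pm \nabla^2\psi \geq 0$, contracted against the positive-definite matrix $(F^{ij})$, gives
\[ \mathcal{L}\bigl(\pm(u_t + \psi) + A w\bigr) = \pm F^{ij}\nabla_{ij}\psi + A F^{ij}\nabla_{ij} w \geq 0 \quad \text{in } M_T. \]
Applying the maximum principle to each choice of sign and rearranging controls $\pm u_t$ in terms of $\max_{\partial M_T}|u_t|$, $2A\max_{\bM}|w|$, and $\sup|\psi|$; grouping these yields \eqref{eq2-ab} with the constant $C$ depending on $c_0$, $\max_{\bM}|w|$, and $|\psi|_{C^0}$, but with no explicit $T$-dependence.

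The main technical issue is ensuring the convex function provides a uniform positive lower bound on its spatial Hessian, independent of $T$; this is automatic because $w$ lives on the compact manifold $\bM$ and does not depend on time. A secondary bookkeeping point is checking that the lower-order $\sup|\psi|$ that arises in the rearrangement of Step 3 can be absorbed into the stated constant $C$, which is harmless once the smooth datum $\psi$ is fixed.
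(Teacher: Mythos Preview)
Your proof is correct and follows the same approach as the paper: the paper's proof simply records the identities $\mathcal{L} u_t = \psi_t$ and $|\mathcal{L}(u_t+\psi)| = |F^{ij}\nabla_{ij}\psi| \leq |\nabla^2\psi|\sum F^{ii}$ and then says the lemma is ``an immediate consequence of the maximum principle,'' which is exactly what you have fleshed out with the barriers $\pm u_t - tK$ and $\pm(u_t+\psi)+Aw$. One small remark: your passage from ``convex'' to ``$\nabla^2 w \geq c_0 g$'' tacitly upgrades convex to \emph{strictly} convex; this is indeed what is needed (and what the paper implicitly uses) to produce the $\sum F^{ii}$ factor that absorbs $F^{ij}\nabla_{ij}\psi$, so you are reading the hypothesis correctly.
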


\begin{proof}
We have the following identities:
$\mathcal{L} u_t = \psi_t$
and 
\[  |\mathcal{L} (u_t+\psi)| = | F^{ij} \nabla_{ij} \psi|
        \leq  |\nabla^2 \psi| \sum F^{ii}. \]
So Lemma~\ref{lemma-E10} is an immediate 
consequence of the maximum principle.
\end{proof}

It remains to derive the gradient estimate
\begin{equation}
\label{gradient}
\sup_{M_T}|\nabla u|^2 \leq C \Big(|u|_{C^0(\ol{M_T})}
+ \sup\limits_{\partial M_T} |\nabla u|^2\Big)
\end{equation} 
 in each of the 
cases (i)-(iv) in Theorem~\ref{thm1}. 
We shall omit case (i) which is trivial, and consider cases (ii)-(iv) following 
ideas from \cite{ LiYY90, Urbas02, Guan14}  in the elliptic case.

Let $\phi$ be a function to be chosen and assume that $|\nabla u| e^{\phi}$
achieves a maximum at an interior point $(x_0, t_0) \in M_T$. As
before we choose local orthonormal frames at $x_0$ such that both
$U_{ij} $ and $F^{ij}$ are diagonal at $(x_0, t_0)$ where 
\begin{equation} 
\label{eq4-0-1}
\frac{\nabla_k u\nabla_k u_t}{|\nabla u|^2} + \phi_t \geq 0, \;\;  \frac{\nabla_k u \nabla_{ik} u}{|\nabla u|^2} + \nabla_i \phi = 0,
\;\; \forall \,  i = 1, \cdots, n,
\end{equation}
\begin{equation} 
\label{eq4-0-3}
F^{ii} \frac{\nabla_k u \nabla_{iik} u + \nabla_{ik} u \nabla_{ik}
u}{|\nabla u|^2} - 2 F^{ii} \frac{(\nabla_k u \nabla_{ik}
u)^2}{|\nabla u|^4}
+ F^{ii} \nabla_{ii} \phi
  \leq 0.
\end{equation}
We have for any $0 < \epsilon < 1$,
\[ \sum_k (\nabla_{ik} u)^2 = \sum_k (U_{ik} - \chi_{ik})^2
      \geq (1 - \epsilon) U_{ii}^2 - \frac{C}{\epsilon} .\]
and
\[ \Big(\sum_k \nabla_k u \nabla_{ik} u\Big)^2
       \leq (1 + \epsilon) |\nabla_i u|^2 U_{ii}^2
           + \frac{C}{\epsilon} |\nabla u|^2. \]
Let $\epsilon = \frac{1}{3}$ 
and $J = \{i: 2 (n+2) |\nabla_i u|^2 > |\nabla u|^2\}$;
note that $J \neq \emptyset$.
By \eqref{eq4-0-1} and \eqref{eq4-0-3} we obtain
\begin{equation}
\label{eq4-1}
\begin{aligned}
 \frac{1}{3}F^{ii} U_{ii}^2 \,&
    - 2 |\nabla u|^2 \sum_{i \in J} F^{ii} |\nabla_i \phi|^2
   + |\nabla u|^2 (F^{ii} \nabla_{ii} \phi - \phi_t)\\
  \,& \leq C (1 - K_0 |\nabla u|^2) \sum F^{ii} + C |\nabla u|
\end{aligned}
\end{equation}
where $K_0 = \inf_{k,l} R_{klkl}$.

Let 
\[ \phi = - \log (1 - b v^2) + A (\ul{u} + w 
              - B t) \]
where $v$ is a positive function, $A$, $B$ and $b$ are constant, 
all to be determined; $b$ will be chosen sufficiently small 
such that $14 b v^2 \leq 1$ in $\ol{M_T}$, while $A = 0$
in cases (ii) and (iii). 
By straightforward calculations, 
\[ \nabla_i \phi = \frac{2 b v \nabla_i v}{1- b v^2}
      + A \nabla_i (\ul u + w), \;\;
  \phi_t = \frac{2bv v_t}{1-bv^2} + A (\ul u_t  - B) \]
and 
\[ \begin{aligned}
\nabla_{ii} \phi
           = \,& \frac{2 b v \nabla_{ii} v + 2 b |\nabla_i v|^2}{1- b v^2}
                + \frac{4 b^2 v^2 |\nabla_i v|^2}{(1- b v^2)^2}
                 + A \nabla_{ii} (\ul u + w) \\
         = \,& \frac{2 b v \nabla_{ii} v}{1- b v^2}
                + \frac{2 b (1 + b v^2) |\nabla_i v|^2}{(1- b v^2)^2}
                 + A \nabla_{ii} (\ul u + w).
    \end{aligned}\]
Plugging these into \eqref{eq4-1}, we obtain
\begin{equation}
\label{eq4-2}
\begin{aligned}
 \frac{1}{3} F^{ii} \,& U_{ii}^2
  +  |\nabla u|^2 \sum_{i \in J}  F^{ii} \Big(\frac{b (1 -7 b v^2) 
       |\nabla_i v|^2}{(n+2) (1- b v^2)^2} - C A^2\Big) \\
  +\,& \frac{2 b v |\nabla u|^2}{1- b v^2}( F^{ii} \nabla_{ii} v - v_t )
   + A |\nabla u|^2 (F^{ii} \nabla_{ii} (\ul u + w) - \ul u_t  +  B) \\
   \leq \,& C (1 - K_0 |\nabla u|^2) \sum F^{ii} + C |\nabla u|.
\end{aligned}
\end{equation}

In both cases (ii) and (iv) we take 
\[ v = \ul u - u + \sup_{\bM_T} (u - \ul u) + 1 \geq 1. \]
Let ${\mu} = \lambda (\nabla^2 \ul u (x_0, t_0)+  \chi (x_0))$,
$\lambda = \lambda (\nabla^2 u (x_0, t_0) + \chi (x_0))$ and $\beta$
as in \eqref{eq2-16}. Suppose first that $|\nu_{\mu}- \nu_{\lambda}|
\geq \beta$. By Lemma~\ref{ma-lemma-C10} and the assumptions 
that $\sum f_i \lambda_i \geq 0$ and $\nabla^2 w \geq \chi$ 
we see that,
\[ F^{ii} \nabla_{ii} (\ul u + w) - \ul u_t  +  B
   \geq 
F^{ii} \nabla_{ii} v - v_t  + (B - u_t)
    \geq \varepsilon \sum F^{ii} + \varepsilon + (B - u_t)\]
for some $\varepsilon > 0$. Let $A = A_1 K_0^-/\varepsilon$, $K_0^-
= \max \{-K_0, 0\}$ and fix $A_1$, $B$ sufficiently large. A bound
$|\nabla u| \leq C$ follows from \eqref{eq4-2} in both cases 
({ii}) and ({iv}).

We now consider the case $|\nu_{\mu}- \nu_{\lambda}| <  \beta$. 
By \eqref{eq2-22} and \eqref{eq4-2} we see that if $|\nabla u|$ is
sufficiently large,
\begin{equation}
\label{eq4-3}
\begin{aligned}
 \frac{\beta}{\sqrt{n}}  ( |\lambda|^2 + c_1 |\nabla u|^4) \sum F^{ii} 
\,& \leq 
 F^{ii} U_{ii}^2 + 2 c_1 |\nabla u|^4 \sum_{i \in J} F^{ii} \\
\,& \leq C (1 - K_0 |\nabla u|^2) \sum F^{ii} + C |\nabla u|
\end{aligned}
\end{equation}
where $c_1 > 0$.


Suppose $|\lambda| \geq R$ for $R$ sufficiently large. Then
\begin{equation}
\label{eq4-4}
\begin{aligned}
 \frac{\beta}{\sqrt{n}}  ( |\lambda|^2 + c_1 |\nabla u|^4) \sum F^{ii} \geq \frac{2 \beta  |\lambda| \sqrt{c_1}}{\sqrt{n}}  |\nabla u|^2 \sum F^{ii} \, & \geq c_2 |\nabla u|^2
 \end{aligned}
\end{equation}
for some uniform $c_2 > 0$.  
We obtain from \eqref{eq4-3} and \eqref{eq4-4} 
a bound for $|\nabla u (x_0, t_0)|$. 

Suppose now that $|\lambda| \leq R$. Then $\sum F^{ii}$ has a 
positive lower bound by \eqref{eq3-155} and \eqref{eq3-16}. 
Therefore a bound  $|\nabla u (x_0, t_0)|$ follows from
\eqref{eq4-3} again. This completes the proof of \eqref{gradient}
in cases (ii) and (iv). 

For case ({iii}) we choose $A = 0$ and 
\begin{equation}
\label{eq4-0-0}
\phi = ( u  - \inf\limits_{\ol{M_T}}  u  +1)^2.
\end{equation}
By \eqref{eq4-2} 
\begin{equation} 
\label{eq4-0-6}
|\nabla u|^4 \sum_{i \in J}  F^{ii} 
   \leq C (1 - K_0 |\nabla u|^2) \sum F^{ii} + C |\nabla u|.
\end{equation}
By \eqref{eq4-0-1} we see that 
$U_{ii} \leq 0$ for each $i \in J$ if $|\nabla u| $ is
sufficiently large, and a bound 
for $|\nabla u (x_0,t_0)|$ therefore follows from \eqref{eq4-0-6} 
and assumption \eqref{eq1-20}.

\bigskip

\section{Appendix: Proof of Lemma~\ref{ma-lemma-C20}}
\setcounter{equation}{0}

In this Appendix we present a proof of 
Lemma~\ref{ma-lemma-C20} (Theorem \ref{3I-th3}) for the reader's
convenience. The basic ideas of the proof are adopted from \cite{Guan14}.  

For $\sigma \in \bfR$ define
\[ \Sigma^{\sigma} := \{(\lambda, p)  \in \Gamma \times \bfR:
                                f (\lambda)-p > \sigma\}.  \]
Let $\partial \Sigma^{\sigma}$ be the boundary of 
$\Sigma^{\sigma}$ and $T_{\hat{\lambda}}\partial \Sigma^{\sigma}$ denote the tangent hyperplane to $\partial \Sigma^{\sigma}$ 
at $\hat{\lambda} \in \partial \Sigma^{\sigma}$. 
The unit normal vector to $\partial \Sigma^{\sigma}$ at $\hat{\lambda}$ is given by 
\[ \nu_{\hat{\lambda}}
   = \frac{(Df (\lambda), -1))}{\sqrt{1 + |Df (\lambda)|^2}}. \]
In addition, for $\hat \mu \in \Gamma \times \bfR$ let
\[
\begin{aligned}
\hat S^{\sigma}_{\hat{\mu}}
       := &\ \{\hat{\lambda} \in \partial \Sigma^{\sigma}:
(\hat{\mu} - \hat{\lambda}) \cdot \nu_{\hat{\lambda}} \leq 0\},\\
 \mathcal{B}_{\sigma}^+
       := &\ \{\hat{\mu} \in \Gamma \times \bfR:
          \hat S^{\sigma}_{\hat{\mu}} \cap \Gamma \times \{a\}
  \; \mbox{is compact}, \; \forall \, a \in \bfR\},\\
V^{\sigma} := &\ \mathcal{B}_{\sigma}^+\setminus \Sigma^{\sigma},
\end{aligned}
\]
and for $\hat \mu\in V^{\sigma}$,
$$\mathcal{B}_{\sigma}^+(\hat \mu)=\{ t\hat\lambda+(1-t)\hat \mu: 
   \hat\lambda\in \hat S^{\sigma}_{\hat\mu}, 0\leq t\leq 1\}.$$
For convenience we shall write $\hat\lambda = (\lambda, p)$, 
$\hat\mu = (\mu, q)$ and $\tilde{f} (\hat\lambda) = f (\lambda) - p$ in this section.

\begin{lemma}
\label{L2.6}
Let $\delta > 0$, $\hat \mu\in V^{\sigma}$. Then
\[ H_{\hat \mu}(R):= \min_{\hat\lambda\in \partial\Sigma^{\sigma}\cap\{|\lambda|=R, 
 |p-q|\leq\delta\}}(\hat\mu-\hat\lambda)\cdot \nu_{\hat\lambda} > 0  \]
for 
\[ R > R_{\hat{\mu}} :=
    \max_{(\lambda,p)\in \hat S^{\sigma}_{\hat \mu}\cap \Gamma\times [q-\delta,q+\delta]} |\lambda|. \]
\end{lemma}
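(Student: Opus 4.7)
The argument will be by contradiction. The pointwise observation is immediate: for $R > R_{\hat{\mu}}$, any $\hat{\lambda} = (\lambda, p) \in \partial\Sigma^{\sigma}$ with $|\lambda| = R$ and $|p - q| \leq \delta$ lies outside $\hat S^{\sigma}_{\hat{\mu}}$, since otherwise the definition of $R_{\hat{\mu}}$ as a maximum would force $|\lambda| \le R_{\hat{\mu}} < R$. By the definition of $\hat S^{\sigma}_{\hat{\mu}}$, we therefore have $(\hat{\mu} - \hat{\lambda}) \cdot \nu_{\hat{\lambda}} > 0$ at every point of the slice, and the task is to promote this pointwise strict positivity to positivity of the infimum.

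Towards this I would take a minimizing sequence $\hat{\lambda}_n = (\lambda_n, p_n)$ in the slice with $(\hat{\mu} - \hat{\lambda}_n) \cdot \nu_{\hat{\lambda}_n} \to H_{\hat{\mu}}(R)$. Since $|\lambda_n| = R$ and $p_n \in [q - \delta, q + \delta]$, after passing to a subsequence we may assume $\lambda_n \to \lambda_{\infty} \in \overline{\Gamma}$ with $|\lambda_{\infty}| = R$ and $p_n \to p_{\infty} \in [q - \delta, q + \delta]$. If $\lambda_{\infty} \in \Gamma$, then $f$ and $Df$ are continuous at $\lambda_{\infty}$, so $\hat{\lambda}_{\infty} \in \partial\Sigma^{\sigma}$ lies in the same slice and $\nu_{\hat{\lambda}_n} \to \nu_{\hat{\lambda}_{\infty}}$. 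Passing to the limit yields $(\hat{\mu} - \hat{\lambda}_{\infty}) \cdot \nu_{\hat{\lambda}_{\infty}} = H_{\hat{\mu}}(R)$; were this $\leq 0$ then $\hat{\lambda}_{\infty} \in \hat S^{\sigma}_{\hat{\mu}}$ with $|\lambda_{\infty}| = R > R_{\hat{\mu}}$, contradicting the very definition of $R_{\hat{\mu}}$.

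The main obstacle is the remaining possibility $\lambda_{\infty} \in \partial \Gamma$. Here $|Df(\lambda_n)|$ typically blows up, the normals $\nu_{\hat{\lambda}_n}$ become nearly horizontal, and the naive concavity bound gives only $(\hat{\mu} - \hat{\lambda}_n) \cdot (Df(\lambda_n), -1) \geq f(\mu) - q - \sigma$, which is non-positive since $\hat{\mu} \notin \Sigma^{\sigma}$ and becomes useless after normalization by $\sqrt{1 + |Df|^2} \to \infty$. To rule out this case I would adapt the $\epsilon$-shift/spherical-cap technique from the proof of Lemma~\ref{ma-lemma-C10}: replace $\hat{\mu}$ by a nearby element $\hat{\mu}^{\epsilon} = (\mu - \epsilon \mathbf{1}, q)$ which remains in $\mathcal{B}^{+}_{\sigma}$, and, via the spherical-cap construction leading to \eqref{gj-C150}, extract a uniform gap $\theta > 0$ in the concavity inequality. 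The compactness of $\hat S^{\sigma}_{\hat{\mu}^{\epsilon}} \cap \Gamma \times [q - \delta, q + \delta]$ guaranteed by the $V^{\sigma}$ hypothesis then prevents $\lambda_n$ from escaping to $\partial \Gamma$ while $(\hat{\mu} - \hat{\lambda}_n) \cdot \nu_{\hat{\lambda}_n}$ tends to zero, closing the contradiction. This step is the parabolic analogue of the key estimate in the proof of Theorem~2.17 of \cite{Guan14}, and it is precisely where the assumption $\hat{\mu} \in V^{\sigma}$ is essentially used.
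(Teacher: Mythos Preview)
The paper states this lemma without proof and evidently regards it as immediate. The intended argument is exactly your opening observation: for $R>R_{\hat\mu}$ every $\hat\lambda$ in the slice $\partial\Sigma^{\sigma}\cap\{|\lambda|=R,\ |p-q|\le\delta\}$ lies outside $\hat S^{\sigma}_{\hat\mu}$ by the definition of $R_{\hat\mu}$, hence $(\hat\mu-\hat\lambda)\cdot\nu_{\hat\lambda}>0$ pointwise; the word ``min'' in the statement signals that the authors take the slice to be compact, and the conclusion follows. Your first two paragraphs reproduce precisely this reasoning.

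Your third paragraph, treating the possibility $\lambda_\infty\in\partial\Gamma$, goes well beyond what the paper provides. The concern is legitimate in principle---under \eqref{eq1-3}--\eqref{eq1-4} alone, closedness of the slice in $\bfR^{n+1}$ is not entirely automatic---but the remedy you sketch is too vague to stand as a proof. The spherical-cap construction of Lemma~\ref{ma-lemma-C10} hinges on a separation hypothesis $|\nu_\mu-\nu_\lambda|\ge\beta$ which is absent here; the shift $\hat\mu\mapsto\hat\mu^{\epsilon}$ remaining in $\mathcal{B}^{+}_{\sigma}$ appeals to the openness proved only afterward in Lemma~\ref{L2.8}; and, most concretely, it is unclear how compactness of $\hat S^{\sigma}_{\hat\mu^{\epsilon}}\cap\Gamma\times[q-\delta,q+\delta]$ constrains your sequence $\lambda_n$, which sits on the sphere $\{|\lambda|=R\}$ rather than in $\hat S^{\sigma}_{\hat\mu^{\epsilon}}$. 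If you wish to close this point rigorously, a more direct route is to show that the level sets $\{f=c\}\subset\Gamma$ do not accumulate on $\partial\Gamma$ for the relevant range of $c$; the paper, however, simply does not engage with this issue.
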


\begin{lemma}
\label{L2.7}
Let $\hat\mu\in V^{\sigma}$. Then $\mathcal{B}_{\sigma}^+ (\hat\mu) \subset V^{\sigma}$ and
$\mathcal{B}_{\sigma}^+ (\hat\mu') \subset \mathcal{B}_{\sigma}^+ (\hat\mu)$ 
for $\hat\mu' \in \mathcal{B}_{\sigma}^+ (\hat\mu)$.
\end{lemma}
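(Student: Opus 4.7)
To prove this, I would work with the concave function $\tilde{f}(\hat\lambda) := f(\lambda) - p$ (writing $\hat\lambda = (\lambda, p)$), under which $\Sigma^{\sigma} = \{\tilde{f} > \sigma\}$ is an open convex set whose inward unit normal on $\partial \Sigma^{\sigma}$ is proportional to $D\tilde{f}$, and the defining condition $\hat\lambda \in \hat S^{\sigma}_{\hat\mu}$ becomes the supporting-hyperplane inequality $D\tilde{f}(\hat\lambda)\cdot(\hat\mu - \hat\lambda) \leq 0$. Every assertion of the lemma then reduces to a concavity computation along the line segments constituting $\mathcal{B}_{\sigma}^+(\hat\mu)$. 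For $\mathcal{B}_{\sigma}^+(\hat\mu) \subset V^{\sigma}$, I would write an arbitrary element as $\hat\mu' = (1-t)\hat\mu + t\hat\lambda_0$ with $\hat\lambda_0 \in \hat S^{\sigma}_{\hat\mu}$ and $t\in[0,1]$. Applying the supporting hyperplane of $\tilde{f}$ at $\hat\lambda_0$ yields
\[
\tilde{f}(\hat\mu') \leq \sigma + (1-t)\, D\tilde{f}(\hat\lambda_0)\cdot(\hat\mu - \hat\lambda_0) \leq \sigma,
\]
so $\hat\mu' \notin \Sigma^{\sigma}$. To verify the compactness requirement of $\mathcal{B}^+_\sigma$, I would prove $\hat S^{\sigma}_{\hat\mu'} \subset \hat S^{\sigma}_{\hat\mu}$: if $\hat\lambda' \in \hat S^{\sigma}_{\hat\mu'}$ then
\[
0 \geq (\hat\mu' - \hat\lambda')\cdot\nu_{\hat\lambda'} = (1-t)(\hat\mu - \hat\lambda')\cdot\nu_{\hat\lambda'} + t(\hat\lambda_0 - \hat\lambda')\cdot\nu_{\hat\lambda'},
\]
and the bound $(\hat\lambda_0 - \hat\lambda')\cdot\nu_{\hat\lambda'} \geq 0$ (supporting hyperplane at $\hat\lambda'$ applied to $\hat\lambda_0 \in \overline{\Sigma^{\sigma}}$) forces $(\hat\mu - \hat\lambda')\cdot\nu_{\hat\lambda'}\le 0$ whenever $t<1$. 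Consequently $\hat S^{\sigma}_{\hat\mu'}\cap \Gamma\times\{a\}$ is a closed subset of the compact slice $\hat S^{\sigma}_{\hat\mu}\cap \Gamma\times\{a\}$ and is therefore compact, yielding $\hat\mu' \in V^{\sigma}$.

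For the nesting $\mathcal{B}_\sigma^+(\hat\mu') \subset \mathcal{B}_\sigma^+(\hat\mu)$, I would take $\hat\nu = (1-s)\hat\mu' + s\hat\lambda' \in \mathcal{B}_\sigma^+(\hat\mu')$ with $\hat\lambda' \in \hat S^{\sigma}_{\hat\mu'}$ and rewrite it as $\hat\nu = (1-s)(1-t)\hat\mu + (1-s)t\hat\lambda_0 + s\hat\lambda'$, placing $\hat\nu$ in the closed triangle with vertices $\hat\mu,\hat\lambda_0,\hat\lambda'$. Since $[\hat\lambda_0,\hat\lambda']\subset \overline{\Sigma^{\sigma}}$ by convexity while $\hat\mu \notin \Sigma^{\sigma}$, the half-line $\hat\gamma(r) = \hat\mu + r(\hat\nu - \hat\mu)$ must first meet $\partial\Sigma^{\sigma}$ at some $\hat\lambda^* = \hat\gamma(r^*)$ with $r^* \leq r_{\mathrm{exit}}$, where $r_{\mathrm{exit}} = 1/(1-(1-s)(1-t)) > 1$ is the parameter at which $\hat\gamma$ exits the triangle through $[\hat\lambda_0,\hat\lambda']$. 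Convexity of $\Sigma^{\sigma}$ makes $\hat\gamma^{-1}(\overline{\Sigma^{\sigma}})$ a single closed interval $[r^*, r^{**}]$; since $r^{**}\ge r_{\mathrm{exit}} > 1$ and $\hat\nu \notin \Sigma^{\sigma}$ rules out $1 \in (r^*, r^{**})$, we must have $r^*\ge 1$. At the entry point the ray direction $\hat\nu-\hat\mu$ points into $\Sigma^{\sigma}$, which translates to $(\hat\mu-\hat\lambda^*)\cdot\nu_{\hat\lambda^*}\le 0$, placing $\hat\lambda^*\in \hat S^{\sigma}_{\hat\mu}$, and therefore $\hat\nu = (1-1/r^*)\hat\mu + (1/r^*)\hat\lambda^* \in \mathcal{B}_\sigma^+(\hat\mu)$.

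The hard part will be the parameter value $t=1$ (so $\hat\mu' = \hat\lambda_0 \in \partial\Sigma^{\sigma}$), where the chain proving $\hat S^{\sigma}_{\hat\mu'}\subset \hat S^{\sigma}_{\hat\mu}$ degenerates into the tangency condition $(\hat\lambda_0 - \hat\lambda')\cdot \nu_{\hat\lambda'} = 0$, meaning $\hat\lambda_0$ lies on the tangent plane at $\hat\lambda'$. For strictly concave $f$ this forces $\hat\lambda'=\hat\lambda_0 \in \hat S^{\sigma}_{\hat\mu}$; in general I would argue by approximation $t_n\uparrow 1$, obtain the inclusion for each $t_n$, and then pass to the limit using the closedness of $\hat S^{\sigma}_{\hat\mu}\cap\Gamma\times\{a\}$. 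The degenerate subcases in the nesting argument ($\hat\nu = \hat\mu$, or $\hat\nu \in \partial\Sigma^{\sigma}$ where the ray is tangent so that $r^* = r^{**} = 1$) follow immediately, in the latter case because tangency of the direction $\hat\nu-\hat\mu$ gives $(\hat\mu - \hat\nu)\cdot \nu_{\hat\nu}=0$ and hence $\hat\nu \in \hat S^{\sigma}_{\hat\mu}$ directly.
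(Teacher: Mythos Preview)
Your approach is essentially the same as the paper's: the core step is the inclusion $\hat S^{\sigma}_{\hat\mu'}\subset\hat S^{\sigma}_{\hat\mu}$, which both you and the paper obtain from the convexity identity
\[
(\hat\mu'-\hat\zeta)\cdot\nu_{\hat\zeta}=(1-t)(\hat\mu-\hat\zeta)\cdot\nu_{\hat\zeta}+t(\hat\lambda_0-\hat\zeta)\cdot\nu_{\hat\zeta}
\]
together with $(\hat\lambda_0-\hat\zeta)\cdot\nu_{\hat\zeta}\ge 0$. The paper does this by contrapositive and then simply declares the nesting ``clear''; your ray argument for $\mathcal{B}_\sigma^+(\hat\mu')\subset\mathcal{B}_\sigma^+(\hat\mu)$ actually supplies the detail the paper omits, and your verification that $\hat\mu'\notin\Sigma^{\sigma}$ is also a point the paper skips.

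There is, however, a genuine gap in your handling of the endpoint $t=1$. The approximation $t_n\uparrow 1$ does not give what you want: for a fixed $\hat\lambda'\in\hat S^{\sigma}_{\hat\lambda_0}$ one has $(\hat\lambda_0-\hat\lambda')\cdot\nu_{\hat\lambda'}=0$ (combining the defining inequality with convexity), and then
\[
(\hat\mu_{t_n}-\hat\lambda')\cdot\nu_{\hat\lambda'}=(1-t_n)(\hat\mu-\hat\lambda')\cdot\nu_{\hat\lambda'},
\]
so knowing this is $\le 0$ for $t_n<1$ is already equivalent to the conclusion $(\hat\mu-\hat\lambda')\cdot\nu_{\hat\lambda'}\le 0$; no limit has been taken. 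The clean fix is to observe that the tangency $(\hat\lambda_0-\hat\lambda')\cdot\nu_{\hat\lambda'}=0$ forces the segment $[\hat\lambda_0,\hat\lambda']$ to lie in $\partial\Sigma^{\sigma}$, and along such a segment $D\tilde f$ is constant: since $\tilde f$ is concave and affine on the segment, $\gamma'\cdot D^2\tilde f(\gamma)\cdot\gamma'=0$ with $D^2\tilde f\le 0$ gives $D^2\tilde f(\gamma)\gamma'=0$. Hence $\nu_{\hat\lambda'}=\nu_{\hat\lambda_0}$, and then
\[
(\hat\mu-\hat\lambda')\cdot\nu_{\hat\lambda'}=(\hat\mu-\hat\lambda_0)\cdot\nu_{\hat\lambda_0}\le 0
\]
because $\hat\lambda_0\in\hat S^{\sigma}_{\hat\mu}$. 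This closes the $t=1$ case (the paper's proof has the same lacuna, since its strict inequality also uses $1-t>0$).
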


\begin{proof}
Let $\hat\mu_t = t \hat\lambda + (1-t) \hat\mu$ for 
$t \in [0, 1]$ and $\hat\lambda \in \hat S^{\sigma}_{\hat\mu}$.
Since 
$\partial \Sigma^{\sigma}$ is convex, 
\[ \begin{aligned}
(\hat\mu_t - \hat\zeta) \cdot \nu_{\hat\zeta}
    = \,& (1-t) (\hat\mu -\hat\zeta) \cdot \nu_{\hat\zeta}
       + t (\hat\lambda - \hat\zeta) \cdot \nu_{\hat\zeta} \\
    > \,& t (\hat\lambda - \hat\zeta) \cdot \nu_{\hat\zeta} \geq 0,
\;\; \forall \, \hat\zeta \in \partial \Sigma^{\sigma} \setminus \hat S^{\sigma}_{\hat\mu}.
  \end{aligned} \]
This shows $\hat S^{\sigma}_{\hat\mu_t} \subset
\hat S^{\sigma}_{\hat\mu}$ and therefore $\hat\mu_t \in V^{\sigma}$.
Clearly
$\mathcal{B}_{\sigma}^+(\hat\mu) \subset \mathcal{B}_{\sigma}^+ (\hat\mu)$.
\end{proof}

\begin{lemma}
\label{L2.8}
The cone $\mathcal{B}_{\sigma}^+$ is open.
\end{lemma}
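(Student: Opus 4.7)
The plan is to split into two cases depending on whether $\hat\mu\in\Sigma^\sigma$ or $\hat\mu\in V^\sigma$, since by definition $\mathcal{B}^+_\sigma = \Sigma^\sigma\cup V^\sigma$.

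For $\hat\mu\in\Sigma^\sigma$ openness is essentially immediate. The set $\Sigma^\sigma$ is open in $\Gamma\times\bfR$ by continuity of $f$, and the supporting hyperplane inequality $f(\mu)\le f(\lambda)+Df(\lambda)\cdot(\mu-\lambda)$ combined with $p=f(\lambda)-\sigma$ on $\partial\Sigma^\sigma$ gives
\[
(\hat\mu-\hat\lambda)\cdot\nu_{\hat\lambda}\;\ge\;\frac{f(\mu)-q-\sigma}{\sqrt{1+|Df(\lambda)|^2}}\;>\;0,\qquad\forall\,\hat\lambda\in\partial\Sigma^\sigma.
\]
Therefore $\hat S^\sigma_{\hat\mu}=\emptyset$, which means $\Sigma^\sigma\subset\mathcal{B}^+_\sigma$, and any small ball around $\hat\mu$ inside $\Sigma^\sigma$ is the required open neighborhood.

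For $\hat\mu\in V^\sigma$ I would argue by contradiction. Assume $\hat\mu_n\to\hat\mu$ with $\hat\mu_n\notin\mathcal{B}^+_\sigma$; then for each $n$ there is some $a_n\in\bfR$ and a point $\hat\lambda_n=(\lambda_n,a_n)\in\hat S^\sigma_{\hat\mu_n}$ (so $f(\lambda_n)=a_n+\sigma$) with $\lambda_n$ leaving every compact subset of $\Gamma$. Since $|\nu_{\hat\lambda_n}|=1$ and $(\hat\mu_n-\hat\lambda_n)\cdot\nu_{\hat\lambda_n}\le0$, linearity in $\hat\mu$ gives
\[
(\hat\mu-\hat\lambda_n)\cdot\nu_{\hat\lambda_n}\;\le\;|\hat\mu-\hat\mu_n|\;\longrightarrow\;0.
\]
Passing to subsequences, I would then separate according to whether $|\lambda_n|\to\infty$ or $\lambda_n\to\lambda^*\in\partial\Gamma$, and according to whether $p_n=f(\lambda_n)-\sigma$ stays bounded or tends to $\pm\infty$, and apply Lemma~\ref{L2.6} at $\hat\mu$ in each sub-case to force $(\hat\mu-\hat\lambda_n)\cdot\nu_{\hat\lambda_n}$ to remain bounded away from zero, contradicting the displayed inequality.

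The main obstacle is that Lemma~\ref{L2.6} only asserts strict positivity of $H_{\hat\mu}(R)$ in a horizontal slab $|p-q|\le\delta$, not a uniform quantitative lower bound along arbitrary escaping sequences. I would handle this through a recession-cone analysis: extract a subsequential limit $\hat e$ of $\hat\lambda_n/|\hat\lambda_n|$ in the asymptotic cone of $\partial\Sigma^\sigma$, and use the defining property of $V^\sigma$ (compactness of every horizontal slice of $\hat S^\sigma_{\hat\mu}$) to rule out $\hat e$ pointing toward $\hat\mu$, producing a uniform separation. The divergent-$p_n$ and boundary-type cases $\lambda_n\to\partial\Gamma$ are handled analogously, by exploiting the positive $p$-component of $\nu_{\hat\lambda_n}$ and smoothness of $\nu$ near $\partial\Gamma\times\bfR$ respectively, in the spirit of the proof of Theorem~2.17 in \cite{Guan14}. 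Once this uniform separation is in hand the contradiction is immediate.
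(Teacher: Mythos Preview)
Your treatment of the case $\hat\mu\in\Sigma^\sigma$ is correct, and this part is indeed immediate.

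For $\hat\mu\in V^\sigma$, your contradiction strategy is quite different from the paper's, and as written it has a genuine gap. The difficulty you yourself flag is real and is not resolved by the recession-cone sketch. If $\hat\mu_n\notin\mathcal{B}_\sigma^+$, the level $a_n$ at which compactness of $\hat S^\sigma_{\hat\mu_n}\cap\Gamma\times\{a_n\}$ fails is completely uncontrolled; it need not stay in any slab $|p-q|\le\delta$, so Lemma~\ref{L2.6} does not apply. Your fallback, to extract a limiting direction $\hat e$ of $\hat\lambda_n/|\hat\lambda_n|$ and use ``the defining property of $V^\sigma$'' to rule it out, does not obviously work: that defining property is a slice-by-slice compactness statement, not an asymptotic-cone statement, and converting one into the other is precisely the uniformity you are missing. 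Concretely, when $\hat\mu\in\partial\Sigma^\sigma\cap V^\sigma$ and $a_n\to+\infty$, concavity only yields
\[
(\hat\mu-\hat\lambda_n)\cdot\nu_{\hat\lambda_n}
  \;\ge\; \frac{f(\mu)-q-\sigma}{\sqrt{1+|Df(\lambda_n)|^2}} \;=\; 0,
\]
which gives no separation and hence no contradiction along that route.

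The paper's proof avoids this entirely by a direct, constructive argument in a single special direction. One replaces $\hat\mu$ by $\hat\mu-\alpha\hat{\bf 1}$ with $\hat{\bf 1}=({\bf 1},0)$; since $\hat{\bf 1}\cdot\nu_{\hat\lambda}=\sum f_i(\lambda)/\sqrt{1+|Df(\lambda)|^2}>0$ for \emph{every} $\hat\lambda\in\partial\Sigma^\sigma$, one can choose $\alpha>0$ so that $\hat\mu-\alpha\hat{\bf 1}\in V^\sigma$ while $\hat S^\sigma_{\hat\mu}\subset\hat S^\sigma_{\hat\mu-\alpha\hat{\bf 1}}$. Then Lemma~\ref{L2.7} gives $\mathcal{B}_\sigma^+(\hat\mu-\alpha\hat{\bf 1})\subset V^\sigma$, and this convex set visibly contains a ball about $\hat\mu$. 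No case splitting on the size of $p$, on $|\lambda|$, or on proximity to $\partial\Gamma$ is needed. The key idea you are missing is this perturbation along $\hat{\bf 1}$, which turns the qualitative positivity of $H_{\hat\mu}(R)$ into an explicit open neighborhood via the convexity structure already established in Lemma~\ref{L2.7}.
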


\begin{proof}
Let $\hat\mu \in V^{\sigma}$
and $a \in \bfR$.
Since $\hat S_{\hat\mu}^{\sigma}\cap \Gamma\times\{a\}$
is compact, 
$\hat S_{\hat\mu}^{\sigma}\cap \Gamma\times \{a\} \subset
B_R\times\{a\}$ for sufficiently large $R$. Therefore  
\[ \alpha := \frac{1}{2 \sqrt{n}} \min_{\hat\zeta \in \partial \Sigma^{\sigma} \cap \partial B_R\times\{a\}}
(\hat\mu - \hat\zeta) \cdot \nu_{\hat\zeta} > 0\]
and 
\[ \begin{aligned}
(\hat\mu - \alpha {\bf \hat 1} - \hat\lambda) \cdot \nu_{\hat\lambda}
   \geq\, & - \sqrt{n} \alpha +
           (\hat\mu - \hat{\lambda}) \cdot \nu_{\hat\zeta}
   \geq \sqrt{n} \alpha > 0,
  \;\; \forall \, \hat\lambda \in \partial \Sigma^{\sigma} \cap \partial B_R\times\{a\}
  \end{aligned} \]
where ${\bf \hat 1} = ({\bf 1}, 0)\in \bfR^{n+1}$. This proves that
$\hat S^{\sigma}_{(\hat\mu - \alpha \hat{\bf 1})} \cap \Gamma
\times\{a\}\subset \partial \Sigma^{\sigma} \cap B_R\times\{a\}$ 
and hence $\hat\mu - \alpha \hat{\bf 1} \in V^{\sigma}$.
On the other hand, for any $\hat\lambda \in \hat S^{\sigma}_{\hat\mu}\cap\Gamma\times\{a\}$, 
\[   \begin{aligned}
(\hat\mu - \alpha {\bf \hat 1} - \hat\lambda)  
     \cdot \nu_{\hat\lambda}
     \leq\, & - \alpha {\bf \hat 1} \cdot \nu_{\hat\lambda}
        = - \frac{\alpha \sum f_i (\lambda)}{\sqrt{1 +|D f|^2}}
        < 0.
  \end{aligned} \]
So $\mathcal{B}_{\sigma}^+(\hat\mu)
\subset \mathcal{B}_{\sigma}^+ (\hat \mu - \alpha {\bf \hat1}) \subset V^{\sigma}$.
Clearly $\mathcal{B}_{\sigma}^+ (\hat \mu - \alpha {\bf \hat 1})$
contains a ball centered at $\hat\mu$.
\end{proof}

\begin{lemma}
\label{L2.9}
Let $K$ be a compact subset of $V^{\sigma}=\mathcal{B}_{\sigma}^+\setminus \Sigma^{\sigma}.$ Then, for any $\delta >0$,
$$\sup_{\hat\mu\in K}\max_{\hat\lambda\in\hat S^{\sigma}_{\hat\mu}\cap\Gamma\times[a-\delta,b+\delta]} |\lambda|< \infty.$$
where $a=\min\{q| \hat\mu \in K\}$, $b=\max\{q| \hat\mu\in K\}$.
\end{lemma}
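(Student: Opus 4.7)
The plan is to argue by contradiction, exploiting the compactness of $K$ together with the openness-type information already established in Lemmas~\ref{L2.7} and \ref{L2.8}. Assume the assertion fails: then one can find sequences $\hat\mu_k = (\mu_k, q_k) \in K$ and $\hat\lambda_k = (\lambda_k, p_k) \in \hat S^{\sigma}_{\hat\mu_k}$ with $p_k \in [a-\delta, b+\delta]$ and $|\lambda_k| \to \infty$. By compactness of $K$, after passing to a subsequence, $\hat\mu_k \to \hat\mu_* = (\mu_*, q_*) \in K \subset V^{\sigma}$.

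The core idea is to dominate the moving caps $\hat S^{\sigma}_{\hat\mu_k}$ by a single cap attached to a fixed base point $\hat\nu \in V^{\sigma}$, and then to apply Lemma~\ref{L2.6} at $\hat\nu$. Inspecting the proof of Lemma~\ref{L2.8}, since $\hat\mu_* \in V^{\sigma}$ one can choose $\alpha > 0$ so that $\hat\nu := \hat\mu_* - \alpha\,{\bf\hat 1} \in V^{\sigma}$ and $\mathcal{B}_{\sigma}^+(\hat\nu)$ contains an open ball about $\hat\mu_*$. For $k$ large, $\hat\mu_k$ falls inside that ball, hence $\hat\mu_k \in \mathcal{B}_{\sigma}^+(\hat\nu)$. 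The proof of Lemma~\ref{L2.7} actually delivers more than its statement: it shows $\hat S^{\sigma}_{\hat\mu'} \subset \hat S^{\sigma}_{\hat\mu}$ whenever $\hat\mu' \in \mathcal{B}_{\sigma}^+(\hat\mu)$. Thus $\hat\lambda_k \in \hat S^{\sigma}_{\hat\mu_k} \subset \hat S^{\sigma}_{\hat\nu}$ for all large $k$.

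Now $\hat\nu$ is a single fixed element of $V^{\sigma}$, and the second coordinates $p_k$ all lie in the fixed bounded interval $[a-\delta, b+\delta]$. Choosing $\delta' > 0$ large enough that $[a-\delta, b+\delta] \subset [q_* - \delta', q_* + \delta']$, Lemma~\ref{L2.6} applied at $\hat\nu$ with this $\delta'$ supplies a finite $R_{\hat\nu}$ with the property that every $(\lambda, p) \in \hat S^{\sigma}_{\hat\nu}$ with $p \in [a-\delta, b+\delta]$ satisfies $|\lambda| \leq R_{\hat\nu}$. In particular $|\lambda_k| \leq R_{\hat\nu}$ for all large $k$, contradicting $|\lambda_k| \to \infty$ and establishing the lemma.

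The main obstacle is the uniform transfer from the varying base points $\hat\mu_k$ to a single fixed $\hat\nu$, a step that is nontrivial because $V^{\sigma} = \mathcal{B}_{\sigma}^+ \setminus \Sigma^{\sigma}$ need not itself be open, so arbitrary perturbations of $\hat\mu_*$ may exit $V^{\sigma}$. The one-sided translation by $-\alpha\,{\bf\hat 1}$ from Lemma~\ref{L2.8} sidesteps this by producing a ``deep'' element $\hat\nu$ whose cone $\mathcal{B}_{\sigma}^+(\hat\nu)$ has nonempty interior around $\hat\mu_*$; combined with the monotonicity of caps extracted from the proof of Lemma~\ref{L2.7}, this converts the compactness of $K$ into a uniform bound at a single base point, after which Lemma~\ref{L2.6} closes the argument.
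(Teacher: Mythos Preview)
Your proof is correct, but it takes a longer route than the paper's. The paper also argues by contradiction and passes to a convergent subsequence $\hat\mu_k \to \hat\mu \in K$, but instead of introducing the auxiliary point $\hat\nu = \hat\mu_* - \alpha\,{\bf\hat 1}$ and invoking Lemmas~\ref{L2.7} and \ref{L2.8}, it works directly at the limit point $\hat\mu$: writing
\[
(\hat\mu - \hat\lambda_k)\cdot\nu_{\hat\lambda_k}
   = (\hat\mu - \hat\mu_k)\cdot\nu_{\hat\lambda_k}
     + (\hat\mu_k - \hat\lambda_k)\cdot\nu_{\hat\lambda_k},
\]
the first term tends to zero because $|\nu_{\hat\lambda_k}| = 1$ and $\hat\mu_k \to \hat\mu$, while the second is $\leq 0$ since $\hat\lambda_k \in \hat S^{\sigma}_{\hat\mu_k}$; hence the $\limsup$ is $\leq 0$, which contradicts Lemma~\ref{L2.6} applied at $\hat\mu$ (with a suitably enlarged $\delta$). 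Your approach trades this short splitting argument for the cap-monotonicity extracted from the proof of Lemma~\ref{L2.7} and the interior translate from the proof of Lemma~\ref{L2.8}; what this buys you is that the contradiction becomes a direct size comparison ($|\lambda_k| \leq R_{\hat\nu}$ versus $|\lambda_k| \to \infty$) rather than a sign argument, so you avoid having to justify a uniform positive lower bound $\epsilon$ in Lemma~\ref{L2.6} across all large radii.
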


\begin{proof}
Suppose this is not true. Then for each integer $k \geq 1$
there exists $\hat\mu_k \in K$ and $\hat\lambda_k \in  \hat S^{\sigma}_{\hat\mu_k}\cap\Gamma\times[a-\delta,b+\delta]$
with $|\lambda_k| \geq k$.
By the compactness of $K$
we may assume $\hat\mu_k \rightarrow \hat\mu \in K$ as $k \rightarrow \infty$.
Thus
\[ \limsup_{k \rightarrow \infty} (\hat\mu - \hat\lambda_k) \cdot \nu_{\hat\lambda_k}
   = \limsup_{k \rightarrow \infty} (\hat\mu - \hat\mu_k) \cdot \nu_{\hat\lambda_k} + \limsup_{k \rightarrow \infty} 
(\hat\mu_k - \hat \lambda_k) \cdot \nu_{\hat\lambda_k} \leq 0. \]
On the other hand, by Lemma~\ref{L2.6} there exists $\epsilon > 0$
such that
\[ (\hat\mu - \hat\lambda_k) \cdot \nu_{\hat\lambda_k} \geq \epsilon,
\;\; \forall \, k > \max_{\hat\lambda \in \hat S_{\hat\mu}^{\sigma}\cap\Gamma\times[q-\delta,q+\delta]} |\lambda|. \]
This is a contradiction.
\end{proof}

Let $\hat\mu \in \ol{\Sigma^{\sigma}}$ and
$\hat\lambda \in \partial \Sigma^{\sigma}$. By the convexity of
$\partial \Sigma^{\sigma}$, the open segment
\[ (\hat\mu, \hat\lambda) := \{t \hat\mu + (1-t)\hat\lambda: 0 < t < 1\} \]
is completely contained in either $\partial \Sigma^{\sigma}$ or $\Sigma^{\sigma}$ by condition~\eqref{eq1-3}. Therefore,
\[ \widetilde{f} ( t\hat\mu + (1-t)\hat\lambda) > \sigma, \;\; \forall \; 0 < t < 1 \]
 unless $(\hat\mu, \hat\lambda) \subset \partial \Sigma^{\sigma}$ which is
equivalent to $\hat S^{\sigma}_{\hat\mu} =\hat S^{\sigma}_{\hat\lambda}$.

For $\hat\mu \in \ol{\Sigma^{\sigma}}$, 
$\delta >0$ and $R>|\mu|$ let
$$ \Theta_R(\hat\mu):=\inf_{\hat\lambda\in \{|\lambda|=R, |p-q|\leq \delta  \}\cap \partial \Sigma^{\sigma}}
\max_{0\leq t\leq 1} \widetilde{f}(t\hat\mu+(1-t)\hat\lambda)-\sigma\geq 0.$$
Clearly $\Theta_R(\hat\mu)=0$ if and only if $(\hat\mu, \hat\lambda)\subset \partial \Sigma^{\sigma}$
for some $\hat\lambda\in \{|\lambda|=R, |p-q|\leq \delta  \}\cap \partial \Sigma^{\sigma}$,
since the set $\{|\lambda|=R, |p-q|\leq \delta  \}\cap \partial\Sigma^{\sigma}$ is compact.

\begin{lemma}\label{L2.11}
For $\hat\mu\in \overline{\Sigma^{\sigma}},\Theta_R(\hat\mu)$ is nondecreasing in $R$.
Moreover, if $\Theta_{R_0}(\hat\mu)>0$ for some $R_0\geq |\mu|
$ then $\Theta_{R^\prime}(\hat\mu)>\Theta_R(\hat\mu)$ for all $R^\prime>R\geq R_0$.
\end{lemma}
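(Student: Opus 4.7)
The plan is to establish the monotonicity of $\Theta_R(\hat\mu)$ by fixing $R'>R\geq|\mu|$ and any endpoint $\hat\lambda'\in A_{R'}:=\{|\lambda|=R',\,|p-q|\leq\delta\}\cap\partial\Sigma^{\sigma}$, showing $\max_{t\in[0,1]}\widetilde f(t\hat\mu+(1-t)\hat\lambda')\geq\Theta_R(\hat\mu)+\sigma$, and then taking the infimum over $\hat\lambda'$.

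The key geometric step is that the segment from $\hat\mu$ to $\hat\lambda'$ must cross the sphere $\{|\lambda|=R\}$: since $|\mu|\leq R<R'=|\lambda'|$, continuity of $t\mapsto|(1-t)\mu+t\lambda'|$ together with the intermediate value theorem yields some $t_0\in[0,1)$ with $|(1-t_0)\mu+t_0\lambda'|=R$. Setting $\hat\eta:=(1-t_0)\hat\mu+t_0\hat\lambda'$, we have $|\eta|=R$, $|p_\eta-q|=t_0|p'-q|\leq\delta$, and by convexity of $\overline{\Sigma^{\sigma}}$ (which is convex since $\widetilde f$ is concave), $\hat\eta\in\overline{\Sigma^{\sigma}}$, so $\widetilde f(\hat\eta)\geq\sigma$. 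If $\hat\eta\in\partial\Sigma^{\sigma}$ then $\hat\eta\in A_R$, and since $[\hat\mu,\hat\eta]\subset[\hat\mu,\hat\lambda']$, the definition of $\Theta_R$ directly yields $\max_{[\hat\mu,\hat\lambda']}\widetilde f\geq\max_{[\hat\mu,\hat\eta]}\widetilde f\geq\Theta_R(\hat\mu)+\sigma$.

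If instead $\hat\eta$ lies in the interior of $\Sigma^{\sigma}$, so $\widetilde f(\hat\eta)>\sigma$, the plan is to exploit an ``outward-closure'' property of the set $S_c:=\{\hat\lambda\in\Gamma\times\bfR:[\hat\mu,\hat\lambda]\cap\{\widetilde f\geq c\}\neq\emptyset\}$: if $\hat\lambda\in S_c$ then every $\hat\lambda''=\hat\mu+s(\hat\lambda-\hat\mu)$ with $s\geq 1$ also lies in $S_c$, since the segment $[\hat\mu,\hat\lambda'']$ contains $[\hat\mu,\hat\lambda]$. Take $c=\Theta_R(\hat\mu)+\sigma$; by definition $A_R\subset S_c$. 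If $c\leq\widetilde f(\hat\eta)$ then $\hat\eta\in\{\widetilde f\geq c\}\cap[\hat\mu,\hat\lambda']$, so $\hat\lambda'\in S_c$ as required. Otherwise one continuously deforms $\hat\lambda'$ along the smooth codimension-$1$ submanifold $\partial\Sigma^{\sigma}\cap\{|p-q|\leq\delta\}$ toward a point of $A_R$, tracking the continuous variation of $\max_{[\hat\mu,\cdot]}\widetilde f$ and using the outward-closure property at each step to preserve membership in $S_c$.

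For the strict inequality under $\Theta_{R_0}(\hat\mu)>0$: for $R\geq R_0$ the infimum defining $\Theta_R$ is attained at some $\hat\lambda^*_R\in A_R$ (by compactness of $A_R$, as established in the previous lemmas), and the hypothesis precisely rules out any flat segment $[\hat\mu,\hat\lambda]\subset\partial\Sigma^{\sigma}$ with $\hat\lambda\in A_R$. The strict inequality then follows because the longer segment $[\hat\mu,\hat\lambda^*_{R'}]$ properly extends a sub-segment whose endpoint lies in $A_R$, and along the non-flat extension the concave $\widetilde f$ must grow strictly before returning to the boundary value $\sigma$, so the maximum strictly increases. The main obstacle I foresee is making rigorous the continuous deformation in the interior case, which demands careful use of the smooth convex structure of $\partial\Sigma^{\sigma}$ together with its transversal slicing by $\{|\lambda|=R\}$, and this is where Lemmas~\ref{L2.6}--\ref{L2.9} on compactness and local geometry of $\partial\Sigma^{\sigma}$ will play the decisive role.
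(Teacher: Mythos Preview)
Your argument has a genuine gap in the main case. The crossing point $\hat\eta$ on the segment $[\hat\mu,\hat\lambda']$ with $|\eta|=R$ will \emph{almost never} lie on $\partial\Sigma^{\sigma}$: since $\hat\mu\in\overline{\Sigma^{\sigma}}$ and $\hat\lambda'\in\partial\Sigma^{\sigma}$, convexity forces every interior point of the segment into the open set $\Sigma^{\sigma}$ unless the whole segment is contained in $\partial\Sigma^{\sigma}$ (which is precisely the degenerate case $\Theta_R=0$). Thus in the generic situation $\hat\eta\notin A_R$, and the definition of $\Theta_R$ gives you nothing about $\max_{[\hat\mu,\hat\eta]}\widetilde f$. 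Your fallback ``continuous deformation'' is not a proof: sliding $\hat\lambda'$ along $\partial\Sigma^{\sigma}$ toward $A_R$ is not a radial motion from $\hat\mu$, so the outward-closure property of $S_c$ is irrelevant along the path, and nothing prevents $\max_{[\hat\mu,\cdot]}\widetilde f$ from dipping below $c$ during the deformation. The same problem undermines your strict-inequality argument, which assumes that $[\hat\mu,\hat\lambda^*_{R'}]$ contains a sub-segment with endpoint in $A_R$.

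The paper avoids this entirely by reversing the comparison. Instead of showing every segment to $A_{R'}$ has large max, it picks the minimizing $\hat\lambda_{R'}\in A_{R'}$ and \emph{constructs} a competitor $\hat\lambda_R\in A_R$ lying on the two-dimensional plane $P$ through $\hat\mu$, $\hat\lambda_{R'}$ and $(0,q)$, as a point on the planar convex curve $P\cap\partial\Sigma^{\sigma}$ at radius $R$. Because $\hat\lambda_R$ is genuinely on $\partial\Sigma^{\sigma}$, one has $\Theta_R\le\max_{[\hat\mu,\hat\lambda_R]}\widetilde f-\sigma$; the point is then to show this is strictly smaller than $\Theta_{R'}=\max_{[\hat\mu,\hat\lambda_{R'}]}\widetilde f-\sigma$, which follows from the strict monotonicity \eqref{eq1-3} once one observes that $\hat\mu,\hat\lambda_R,\hat\lambda_{R'}$ are not collinear (ruled out by $\Theta_{R_0}>0$). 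The construction of a point on $\partial\Sigma^{\sigma}$ at the intermediate radius --- rather than on the chord --- is exactly the idea your attempt is missing.
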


\begin{proof}
We shall write
$\Theta_R = \Theta_R (\hat\mu)$ when there is no possible confusion.
Suppose $\Theta_{R_0} (\hat\mu) > 0$ for some $R_0 \geq |\mu|$.
Let $R' > R \geq R_0$ and assume that $\Theta_{R'}$ is achieved at
$\hat\lambda_{R'} \in \{\hat\lambda\in \Gamma\times \bfR | |\lambda|=R', |p-q|\leq \delta \}
  \cap \partial \Sigma^{\sigma}$, 
that is, 
\[     \Theta_{R'} =
        \max_{0 \leq t \leq 1} \widetilde{f} (t \hat\mu + (1-t) \hat\lambda_{R'})-\sigma. \]
Let $P$ be the (two dimensional) plane through $\hat\mu, \hat\lambda_{R'}$ and the
point of $(0,q)$. There is a point $\hat\lambda_R \in \{|\lambda|=R, |p-q|\leq \delta  \}$ 
which lies
on the curve $P \cap \Sigma^{\sigma}$.
Note that $\hat\mu$, $\hat\lambda_{R}$ and $\hat\lambda_{R'}$ are not on a straight line,
for $(\hat\mu, \hat\lambda_R)$ can not be part of 
$(\hat\mu, \hat\lambda_{R'})$
since $\Theta_{R_0} > 0$ and $\partial \Sigma^{\sigma}$ is convex.
We see that
\[ \max_{0 \leq t \leq 1} \widetilde{f} 
    (t \hat\mu + (1-t) \hat\lambda_R) -\sigma < \Theta_{R'} \]
by condition \eqref{eq1-3}. This proves $\Theta_{R} < \Theta_{R'}$.
\end{proof}

\begin{lemma}\label{L2.12}
Let $\hat\mu\in \partial\Sigma^{\sigma}\cap \mathcal{B}_{\sigma}^+$ and $\delta>0$.
Then 
$$\Theta_R (\hat\mu) > 0,
\; \forall \, R>\max_{\hat\lambda\in \hat S^{\sigma}_{\hat\mu}
   \cap \Gamma \times [q-\delta,q+\delta]} |\lambda|.$$
\end{lemma}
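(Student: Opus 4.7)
The plan is to argue by contradiction, exploiting the characterization of $\Theta_R(\hat\mu) = 0$ recalled in the paragraph just before Lemma~\ref{L2.11}, together with the smoothness and convexity of $\partial\Sigma^{\sigma}$ guaranteed by \eqref{eq1-3} and \eqref{eq1-4}.

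Set $R_0 := \max_{\hat\lambda\in \hat S^{\sigma}_{\hat\mu}\cap \Gamma\times[q-\delta,q+\delta]} |\lambda|$, and suppose for contradiction that $\Theta_R(\hat\mu) = 0$ for some $R > R_0$. Since the set $\{|\lambda|=R,\,|p-q|\leq\delta\}\cap\partial\Sigma^{\sigma}$ is compact and $\tilde f$ is continuous, the infimum defining $\Theta_R(\hat\mu)$ is attained at some $\hat\lambda$ with $|\lambda|=R$, $|p-q|\leq\delta$, and by the characterization already noted this forces the open segment $(\hat\mu,\hat\lambda)$ to lie in $\partial\Sigma^{\sigma}$. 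Taking closures, the entire segment $[\hat\mu,\hat\lambda]\subset \partial\Sigma^{\sigma}$.

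The key geometric step is to deduce $\hat\lambda\in \hat S^{\sigma}_{\hat\mu}$ from $[\hat\mu,\hat\lambda]\subset \partial\Sigma^{\sigma}$. Since $\partial\Sigma^{\sigma}$ is a smooth hypersurface, any straight segment contained in it must have direction tangent to $\partial\Sigma^{\sigma}$ at every endpoint; in particular $\hat\mu-\hat\lambda\in T_{\hat\lambda}\partial\Sigma^{\sigma}$, which gives $(\hat\mu-\hat\lambda)\cdot \nu_{\hat\lambda}=0$ and hence $\hat\lambda\in \hat S^{\sigma}_{\hat\mu}$. Combined with $|\lambda|=R>R_0$ and $|p-q|\leq\delta$, this contradicts the definition of $R_0$. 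Therefore $\Theta_R(\hat\mu)>0$, as desired.

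The main obstacle is conceptual rather than computational: it is the tangential identification of $\hat\lambda$ as an element of $\hat S^{\sigma}_{\hat\mu}$, which uses only the $C^1$ smoothness of $\partial\Sigma^{\sigma}$ (from \eqref{eq1-3}) and its convexity (from \eqref{eq1-4}). No additional structural assumption on $f$ or $\Gamma$ is required, and the argument does not depend on whether $\hat\mu$ is a regular or degenerate boundary point of $\Sigma^{\sigma}$ beyond the membership $\hat\mu\in\mathcal B^+_{\sigma}$, which is precisely what ensures $R_0<\infty$ so that the slab constraint $|p-q|\leq\delta$ yields a genuine restriction.
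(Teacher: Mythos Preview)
Your argument is correct and coincides with what the paper treats as evident: the paper's proof is literally ``This is obvious,'' and the implicit reasoning is exactly the contradiction you wrote out---if $\Theta_R(\hat\mu)=0$ then some segment $(\hat\mu,\hat\lambda)$ with $|\lambda|=R$, $|p-q|\le\delta$ lies in $\partial\Sigma^{\sigma}$, whence $(\hat\mu-\hat\lambda)\cdot\nu_{\hat\lambda}=0$ and $\hat\lambda\in\hat S^{\sigma}_{\hat\mu}\cap\Gamma\times[q-\delta,q+\delta]$, contradicting $R>R_0$. One could equally invoke the equivalence $(\hat\mu,\hat\lambda)\subset\partial\Sigma^{\sigma}\iff \hat S^{\sigma}_{\hat\mu}=\hat S^{\sigma}_{\hat\lambda}$ stated just before Lemma~\ref{L2.11} (and then note $\hat\lambda\in\hat S^{\sigma}_{\hat\lambda}$ trivially), but your direct tangency observation is the same content.
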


\begin{proof}
This is obvious.
\end{proof}

\begin{lemma}
\label{L2.14}
For $\hat\mu\in \partial\Sigma^{\sigma}\cap \mathcal{B}_{\sigma}^+$ and $\delta>0$,
let $N_{\hat\mu}=2\max_{\hat\lambda\in \hat S^{\sigma}_{\hat\mu}\cap \Gamma\times[q-\delta,q+\delta]}|\lambda|$.
Then for any $\hat\lambda\in \partial\Sigma^{\sigma}$ with $|p-q|\leq \delta$, when $|\lambda|\geq N_{\hat\mu}$,
\begin{equation}
\sum f_i(\lambda)(\mu_i-\lambda_i)-(q-p)\geq \Theta_{N_{\hat\mu}}(\hat\mu)>0.
\end{equation}
\end{lemma}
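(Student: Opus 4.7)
\medskip

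\noindent\textbf{Proof proposal.} The plan is to encode the quantity $\sum f_i(\lambda)(\mu_i-\lambda_i)-(q-p)$ as the tangent-plane excess of $\tilde f$ at $\hat\lambda$, then exploit concavity of $\tilde f$ along the segment $[\hat\lambda,\hat\mu]$, and finally invoke the monotonicity of $\Theta_R$ (Lemma~\ref{L2.11}) together with its positivity at $R=N_{\hat\mu}$ (Lemma~\ref{L2.12}).

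First I would observe the algebraic identity
\[
\sum f_i(\lambda)(\mu_i-\lambda_i)-(q-p)=(Df(\lambda),-1)\cdot(\hat\mu-\hat\lambda),
\]
so the left-hand side is (up to the factor $\sqrt{1+|Df(\lambda)|^2}$) the tangent-plane evaluation of the affine linearization of $\tilde f$ at $\hat\lambda$, tested against $\hat\mu-\hat\lambda$. By the concavity of $f$, the general tangent-plane inequality gives, for any $\hat y\in \Gamma\times\bfR$,
\[
\tilde f(\hat y)\leq \tilde f(\hat\lambda)+\sum f_i(\lambda)(y_i-\lambda_i)-(y_{n+1}-p).
\]
Specializing to the segment point $\hat y=t\hat\mu+(1-t)\hat\lambda$ with $t\in(0,1]$, and using $\tilde f(\hat\lambda)=\sigma$, yields
\[
\tilde f(t\hat\mu+(1-t)\hat\lambda)-\sigma
\leq t\Bigl(\sum f_i(\lambda)(\mu_i-\lambda_i)-(q-p)\Bigr).
\]
Since $\hat\mu\in\partial\Sigma^{\sigma}$ and $\hat\lambda\in\partial\Sigma^{\sigma}$, both endpoint values of $\tilde f$ equal $\sigma$, so $\tilde f(t\hat\mu+(1-t)\hat\lambda)-\sigma\geq 0$ for $t\in[0,1]$; dividing by $t\leq 1$ and maximizing gives
\[
\sum f_i(\lambda)(\mu_i-\lambda_i)-(q-p)\;\geq\;\max_{0\leq t\leq 1}\bigl(\tilde f(t\hat\mu+(1-t)\hat\lambda)-\sigma\bigr).
\]

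Now $\hat\lambda\in\partial\Sigma^{\sigma}$ with $|p-q|\leq\delta$ and $|\lambda|\geq N_{\hat\mu}$, so taking the infimum over such $\hat\lambda$ at radius $R=|\lambda|$ shows
\[
\sum f_i(\lambda)(\mu_i-\lambda_i)-(q-p)\;\geq\;\Theta_{|\lambda|}(\hat\mu).
\]
Lemma~\ref{L2.11} (monotonicity of $\Theta_R$) gives $\Theta_{|\lambda|}(\hat\mu)\geq \Theta_{N_{\hat\mu}}(\hat\mu)$; and Lemma~\ref{L2.12}, applied with the threshold $\max_{\hat\lambda\in\hat S^{\sigma}_{\hat\mu}\cap\Gamma\times[q-\delta,q+\delta]}|\lambda|=N_{\hat\mu}/2<N_{\hat\mu}$, ensures $\Theta_{N_{\hat\mu}}(\hat\mu)>0$. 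Chaining these three inequalities yields the claim.

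The non-routine step is the second one: recognizing that the tangent-plane bound, when evaluated along the segment $[\hat\lambda,\hat\mu]$ and then divided by $t$, converts the concavity inequality into a lower bound on the desired expression in terms of $\max_t(\tilde f-\sigma)$ along that segment. Everything afterwards is bookkeeping with the already-proved facts about $\Theta_R$; the only subtlety is making sure $\hat\mu$ itself lies in $\hat S^{\sigma}_{\hat\mu}\cap\Gamma\times[q-\delta,q+\delta]$ (it does, since $(\hat\mu-\hat\mu)\cdot\nu_{\hat\mu}=0$ and trivially $|q-q|\leq\delta$), which is what forces $N_{\hat\mu}\geq 2|\mu|$ and in particular $N_{\hat\mu}$ strictly exceeds the threshold needed for Lemma~\ref{L2.12}.
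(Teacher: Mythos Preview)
Your proof is correct and follows essentially the same route as the paper's: the paper compresses the concavity argument into the single displayed inequality $\sum f_i(\lambda)(\mu_i-\lambda_i)-(q-p)\geq \max_{0\leq t\leq 1}\tilde f(t\hat\mu+(1-t)\hat\lambda)-\sigma$ and then cites Lemmas~\ref{L2.11} and~\ref{L2.12}, while you spell out in detail how that inequality comes from the tangent-plane bound for $\tilde f$ along the segment, together with the observation $\tilde f\geq\sigma$ on $[\hat\lambda,\hat\mu]$ (itself a consequence of concavity and $\tilde f(\hat\mu)=\tilde f(\hat\lambda)=\sigma$). Your added remark that $\hat\mu\in\hat S^{\sigma}_{\hat\mu}$ forces $N_{\hat\mu}\geq 2|\mu|$, so that $\Theta_{N_{\hat\mu}}(\hat\mu)$ is defined and Lemma~\ref{L2.12} applies, is a useful clarification the paper leaves implicit.
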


\begin{proof}
By the concavity of $\widetilde{f}$ with respect to $\hat\lambda$,
\[ \begin{aligned}
&\sum f_i (\lambda) (\mu_i - \lambda_i)-(q-p)\\
    \geq\, & \max_{0 \leq t \leq 1} \widetilde{f}(t \hat\mu + (1-t) \hat\lambda)-\sigma,
    \;\;  \forall \, \hat\mu, \hat\lambda \in \partial \Sigma^{\sigma}.
\end{aligned}  \]
So Lemma~\ref{L2.14} follows from
Lemma~\ref{L2.11} and Lemma~\ref{L2.12}.
\end{proof}

\begin{lemma}
\label{L2.15}
Let $K$ be a compact subset of $\partial\Sigma^{\sigma}\cap \mathcal{B}^+_{\sigma}$ and $\eta>0$. Define
$$ a=\min\{q|\hat\mu\in K\},\;
b=\max\{q|\hat\mu\in K\}, \;
\delta=|b-a|+\eta$$
 $$N_K:=\sup_{\hat\mu\in K} N_{\hat\mu}, \; 
\Theta_K :=\inf_{\hat\mu\in K}\Theta_{N_K}\hat\mu.$$
Then for any $\hat\lambda \in \partial\Sigma^{\sigma}\cap \Gamma\times[a-\eta,b+\eta]$, when $|\lambda|\geq N_K$,
\begin{equation}\label{2.2}
\sum f_i(\lambda)(\mu_i-\lambda_i)-(q-p)\geq \Theta_K>0, \;
\forall \, \hat\mu\in K.
\end{equation}
\end{lemma}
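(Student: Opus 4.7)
The plan is to reduce the uniform estimate over $K$ to the pointwise estimate of Lemma~\ref{L2.14}, and then extract positivity of the infimum by a compactness/contradiction argument based on the defining property of $\mathcal{B}_{\sigma}^+$.

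First I would verify that all the quantities are finite. Since $K$ is a compact subset of $V^{\sigma}=\mathcal{B}_{\sigma}^+\setminus\Sigma^{\sigma}$ (note $K\subset\partial\Sigma^{\sigma}\cap\mathcal{B}_{\sigma}^+\subset V^{\sigma}$), Lemma~\ref{L2.9} applied with parameter $\delta=|b-a|+\eta$ gives $N_K<\infty$. Next, note that for $\hat\mu=(\mu,q)\in K$ we have $q\in[a,b]$, so for any $\hat\lambda=(\lambda,p)\in\partial\Sigma^{\sigma}\cap\Gamma\times[a-\eta,b+\eta]$ we automatically get $|p-q|\leq (b-a)+\eta=\delta$.

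Now I would establish the key estimate. Fix $\hat\mu\in K$ and $\hat\lambda$ as above with $|\lambda|\geq N_K$. By concavity of $\widetilde f$ on $\Gamma\times\bfR$,
\[
\sum f_i(\lambda)(\mu_i-\lambda_i)-(q-p)\geq \max_{0\leq t\leq 1}\widetilde f(t\hat\mu+(1-t)\hat\lambda)-\sigma,
\]
and the right-hand side is bounded below by $\Theta_{|\lambda|}(\hat\mu)$ by the very definition of $\Theta_R$, since our $\hat\lambda$ lies in the set over which the infimum is taken (with $R=|\lambda|$). Since $|\lambda|\geq N_K\geq N_{\hat\mu}$ and $\Theta_{N_{\hat\mu}}(\hat\mu)>0$ by Lemma~\ref{L2.12}, Lemma~\ref{L2.11} yields $\Theta_{|\lambda|}(\hat\mu)\geq \Theta_{N_K}(\hat\mu)\geq \Theta_K$, which gives the desired inequality \eqref{2.2}.

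The main obstacle, and the most delicate step, is showing $\Theta_K>0$. I plan to argue by contradiction: suppose there is a sequence $\hat\mu_k\in K$ with $\Theta_{N_K}(\hat\mu_k)\to 0$. For each $k$ the infimum defining $\Theta_{N_K}(\hat\mu_k)$ is attained at some $\hat\lambda_k\in\partial\Sigma^{\sigma}$ with $|\lambda_k|=N_K$ and $|p_k-q_k|\leq\delta$, so $\hat\lambda_k$ ranges over a compact set. Passing to a subsequence, $\hat\mu_k\to\hat\mu_*\in K$ and $\hat\lambda_k\to\hat\lambda_*\in\partial\Sigma^{\sigma}$, with $|\lambda_*|=N_K$. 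By continuity, $\max_{t\in[0,1]}\widetilde f(t\hat\mu_*+(1-t)\hat\lambda_*)-\sigma=0$, so the whole segment $[\hat\mu_*,\hat\lambda_*]$ lies on $\partial\Sigma^{\sigma}$, which forces $\hat\lambda_*\in \hat S^{\sigma}_{\hat\mu_*}$. Since $|p_*-q_*|\leq\delta$, this places $\hat\lambda_*$ in $\hat S^{\sigma}_{\hat\mu_*}\cap\Gamma\times[q_*-\delta,q_*+\delta]$, whose elements have norm at most $N_{\hat\mu_*}/2\leq N_K/2$ by the definition of $N_{\hat\mu_*}$. This contradicts $|\lambda_*|=N_K>0$, so $\Theta_K>0$.

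The only subtle points to verify carefully in the write-up are that $K\subset V^{\sigma}$ so Lemma~\ref{L2.9} applies, that the set $\{\hat\lambda\in\partial\Sigma^{\sigma}:|\lambda|=N_K,\,|p-q|\leq\delta\}$ is compact so the infimum in $\Theta_{N_K}(\hat\mu_k)$ is attained, and that the factor of $2$ in the definition $N_{\hat\mu}=2\max|\lambda|$ provides the gap that closes the contradiction.
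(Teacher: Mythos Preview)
Your proposal is correct and follows essentially the same route as the paper: invoke Lemma~\ref{L2.9} for $N_K<\infty$, obtain the inequality via concavity and the monotonicity of $\Theta_R$ (this is exactly the content of Lemma~\ref{L2.14}, which the paper simply cites while you inline it), and then argue $\Theta_K>0$ by compactness. The only difference is expository: the paper compresses the last step into the single phrase ``by the continuity of $\widetilde f$'' (the infimum defining $\Theta_K$ is taken over the compact set $K\times(\{|\lambda|=N_K,\,|p-q|\le\delta\}\cap\partial\Sigma^{\sigma})$ of a continuous positive function, hence is attained and positive), whereas you spell this out as a sequential contradiction argument exploiting the factor~$2$ in $N_{\hat\mu}$.
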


\begin{proof}
From Lemma~\ref{L2.9} we see that $N_K < \infty$ and consequently,
\[ \Theta_K := \inf_{\hat\mu \in K}
        \inf_{\hat\lambda \in \{|\lambda|={N_K}, |p-q|\leq \delta\}\cap \partial \Sigma^{\sigma}}
        \max_{0 \leq t \leq 1} \widetilde{f} (t \hat\mu + (1-t)\hat\lambda)-\sigma > 0 \]
by the continuity of $\widetilde{f}$.
Now \eqref{2.2} follows from
Lemma~\ref{L2.14}.
\end{proof}

\begin{theorem}
\label{T2.16}
Let  $\delta > 0$, $\hat\mu = (\mu, p) \in \mathcal{B}_{\sigma}^+$ and $0< \varepsilon < \frac12 \mbox{dist} \, (\hat\mu,\partial \mathcal{B}_{\sigma}^+)$.
There exist positive constants $\theta_{\hat\mu}$, $R_{\hat\mu}$
such that for any 
$\hat\lambda = (\lambda, q) \in \partial\Sigma^{\sigma}$ with 
$|p-q|\leq \delta$, when $|\lambda|\geq R_{\hat\mu}$,
\begin{equation}
\label{2.3}
\sum f_i(\lambda)(\mu_i-\lambda_i) -(q-p) \geq \theta_{\hat\mu}+\varepsilon\sum f_i(\lambda).
\end{equation}
\end{theorem}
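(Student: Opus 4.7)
The plan is a shifting argument. Set $\hat\mu^\varepsilon := \hat\mu - \varepsilon \hat{\bf 1}$ where $\hat{\bf 1} = ({\bf 1}, 0) \in \bfR^{n+1}$. The trivial identity
\[
\sum f_i(\lambda)(\mu_i - \lambda_i) - (q - p)
 = \sum f_i(\lambda)(\mu_i^\varepsilon - \lambda_i) - (q - p) + \varepsilon \sum f_i(\lambda)
\]
reduces \eqref{2.3} to producing positive constants $\theta_{\hat\mu}, R_{\hat\mu}$ with
\[
\sum f_i(\lambda)(\mu_i^\varepsilon - \lambda_i) - (q - p) \geq \theta_{\hat\mu}
\]
for every $\hat\lambda \in \partial \Sigma^\sigma$ satisfying $|p-q| \leq \delta$ and $|\lambda| \geq R_{\hat\mu}$. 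By the hypothesis on $\varepsilon$ and the openness of $\mathcal{B}_\sigma^+$ (Lemma~\ref{L2.8}), $\hat\mu^\varepsilon$ remains inside $\mathcal{B}_\sigma^+$.

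I would then split into three cases according to the position of $\hat\mu^\varepsilon$ relative to $\overline{\Sigma^\sigma}$. If $\hat\mu^\varepsilon \in \Sigma^\sigma$, the concavity of $\tilde f$ applied at $\hat\lambda$ yields $\theta_{\hat\mu} = \tilde f(\hat\mu^\varepsilon) - \sigma > 0$ immediately, with no size restriction on $|\lambda|$. If $\hat\mu^\varepsilon \in \partial \Sigma^\sigma$, applying Lemma~\ref{L2.15} to the singleton $K = \{\hat\mu^\varepsilon\}$ delivers $\theta_{\hat\mu} = \Theta_K$ and $R_{\hat\mu} = N_K$ directly.

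The remaining case $\hat\mu^\varepsilon \in \mathcal{B}_\sigma^+ \setminus \overline{\Sigma^\sigma}$ is subtler because the naive concavity bound becomes negative. My strategy is to pick $\hat\lambda_0 \in \hat S^\sigma_{\hat\mu^\varepsilon}$, which is non-empty and lies in a fixed compact subset of $\partial \Sigma^\sigma$ by the definition of $\mathcal{B}_\sigma^+$ and Lemma~\ref{L2.9} applied to $\{\hat\mu^\varepsilon\}$; Lemma~\ref{L2.7} places $\hat\lambda_0$ in $\mathcal{B}_\sigma^+$. Applying Lemma~\ref{L2.15} at $K = \{\hat\lambda_0\}$ then yields
\[
\sum f_i(\lambda)(\lambda_{0,i} - \lambda_i) - (p_0 - p) \geq \Theta_K > 0
\]
for $|\lambda|$ large, where $p_0$ is the last coordinate of $\hat\lambda_0$. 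Splitting
\[
\sum f_i(\lambda)(\mu_i^\varepsilon - \lambda_i) - (q - p)
 = \left[\sum f_i(\lambda)(\lambda_{0,i} - \lambda_i) - (p_0 - p)\right]
  + \left[\sum f_i(\lambda)(\mu_i^\varepsilon - \lambda_{0,i}) - (q - p_0)\right],
\]
the first bracket is bounded below by $\Theta_K$, and the second equals $\sqrt{1 + |Df(\lambda)|^2}\,\nu_{\hat\lambda} \cdot (\hat\mu^\varepsilon - \hat\lambda_0)$, which one would control by writing $\hat\mu^\varepsilon - \hat\lambda_0 = (\hat\mu^\varepsilon - \hat\lambda) + (\hat\lambda - \hat\lambda_0)$, using Lemma~\ref{L2.6} for the first summand and the fact that $\hat\lambda_0$ lies in a bounded region for the second.

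The hard part will be making this transfer uniform as $|\lambda| \to \infty$: the correction term $\nu_{\hat\lambda} \cdot (\hat\lambda - \hat\lambda_0)$ can be negative and grow in modulus with $|\lambda|$, so one cannot simply dominate it by the bound $\Theta_K$ from Lemma~\ref{L2.15}. My expectation is that the monotonicity of $\Theta_R$ in $R$ (Lemma~\ref{L2.11}), combined with a judicious choice of $\hat\lambda_0$ (say one minimizing $|\hat\mu^\varepsilon - \hat\lambda_0|$, which exists by the compactness in Lemma~\ref{L2.9}), forces the positive bound to absorb the correction uniformly, completing the proof with constants depending only on $\hat\mu$, $\varepsilon$, and $\delta$.
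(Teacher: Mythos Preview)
Your shifting reduction and the handling of the cases $\hat\mu^\varepsilon \in \Sigma^\sigma$ and $\hat\mu^\varepsilon \in \partial\Sigma^\sigma$ are fine and match the paper's argument (the paper merges them into the single case $\hat\mu^\varepsilon \in \overline{\Sigma^\sigma}$ via Lemma~\ref{L2.14}). The gap is in the remaining case $\hat\mu^\varepsilon \in \mathcal{B}_\sigma^+ \setminus \overline{\Sigma^\sigma}$, and it is a real one: the correction term $D\tilde f(\hat\lambda)\cdot(\hat\mu^\varepsilon-\hat\lambda_0)$ cannot be controlled for a \emph{fixed} $\hat\lambda_0$. Convexity of $\partial\Sigma^\sigma$ gives $\nu_{\hat\lambda}\cdot(\hat\lambda_0-\hat\lambda)\geq 0$, which bounds the correction from \emph{above}, not below; the membership $\hat\lambda_0\in \hat S^\sigma_{\hat\mu^\varepsilon}$ only constrains $\nu_{\hat\lambda_0}\cdot(\hat\mu^\varepsilon-\hat\lambda_0)$, not $\nu_{\hat\lambda}\cdot(\hat\mu^\varepsilon-\hat\lambda_0)$. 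Lemma~\ref{L2.11} tells you $\Theta_R(\hat\lambda_0)$ is nondecreasing but gives no growth rate, so it cannot be expected to absorb a negative term of order $\sqrt{1+|Df(\lambda)|^2}$. Your minimizing-distance choice does not help either, since the sign issue persists regardless of which point of $\hat S^\sigma_{\hat\mu^\varepsilon}$ you fix.

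The paper's device removes the correction entirely by letting the auxiliary point depend on $\hat\lambda$: for each large $\hat\lambda\in\partial\Sigma^\sigma\setminus\hat S^\sigma_{\hat\mu^\varepsilon}$, the open segment $(\hat\mu^\varepsilon,\hat\lambda)$ enters $\Sigma^\sigma$ near $\hat\lambda$ (since $\nu_{\hat\lambda}\cdot(\hat\mu^\varepsilon-\hat\lambda)>0$) and must exit at some $\hat\lambda'\in\hat S^\sigma_{\hat\mu^\varepsilon}$ before reaching $\hat\mu^\varepsilon$. Collinearity gives $\hat\mu^\varepsilon-\hat\lambda = t^{-1}(\hat\lambda'-\hat\lambda)$ with $t\in(0,1)$, so
\[
D\tilde f(\hat\lambda)\cdot(\hat\mu^\varepsilon-\hat\lambda)\ \geq\ D\tilde f(\hat\lambda)\cdot(\hat\lambda'-\hat\lambda),
\]
with no extra term. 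Since all such $\hat\lambda'$ lie in the compact set $K=\hat S^\sigma_{\hat\mu^\varepsilon}\cap\Gamma\times[p-\delta,p+\delta]$, one applies Lemma~\ref{L2.15} to this whole $K$ (not to a singleton) to obtain the uniform lower bound $\Theta_K>0$. Replacing your fixed $\hat\lambda_0$ by this segment-intersection point $\hat\lambda'$ is the missing idea.
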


\begin{proof}
We first note that 
$\hat\mu^{\varepsilon} := \hat\mu - \varepsilon {\bf{\hat 1}} \in \mathcal{B}_{\sigma}^+$.
If $\hat \mu^{\varepsilon} \in\overline{\Sigma^{\sigma}}$
then $\Theta_{R_0}(\hat \mu^{\varepsilon}) > 0$ for some $R_0>0$.
By Lemma \ref{L2.14} we see
\eqref{2.3} hold when $|\lambda|\geq R_0$.

Suppose now that $\hat\mu^{\varepsilon} \in \mathcal{B}_{\sigma}^+\setminus \overline{\Sigma^{\sigma}}$
and $\hat\lambda \in \partial\Sigma^{\sigma} \setminus 
\hat S^{\sigma}_{\hat \mu^{\varepsilon}}$
with $|p- q| \leq \delta$. 
the segment $(\hat \mu^\varepsilon, \hat\lambda)$ goes through $\hat S^{\sigma}_{\hat \mu^{\varepsilon}} \cap \Gamma \times [q-\delta, q+\delta]$
at a point $\hat \lambda^\prime$.
By the concavity of $\tilde{f}$ and Lemma \ref{L2.15} applied to $K=\hat S^{\sigma}_{\hat \mu^{\varepsilon}} \cap \Gamma \times [q-\delta, q+\delta]$,
we obtain
$$ \sum \tilde f_i (\hat \lambda) 
           (\hat \mu^\varepsilon_i - \hat \lambda_i) 
   \geq \sum \tilde f_i (\hat \lambda) 
     (\hat \lambda^\prime_i - \hat \lambda_i) \geq \Theta_K>0. $$
This proves \eqref{2.3} for
$\theta_{\hat\mu}=\min\{\Theta_{R_0},\Theta_K\}, R_{\hat\mu}=\max\{R_0, N_K\}$.
\end{proof}

\begin{theorem}
\label{3I-th3}
Let $K$ be a compact subset of $\Gamma \times \bfR$,
$\eta > 0$, 
and let $a$, $b$, $\delta$ be defined as in Lemma~\ref{L2.15}.
Suppose that 
$\hat S^{\sigma}_{\hat{\mu}} [a, b]
  := \hat S^{\sigma}_{\hat{\mu}} \cap \Gamma \times [a, b]$
is compact for any $\hat \mu\in K$.
Then there exist $\varepsilon,\theta_K, R_K > 0$ such that for any $\hat\lambda \in
\partial \Sigma^{\sigma}\cap \Gamma\times[a-\eta,b+\eta]$, when $|\lambda| \geq R_K$,
\begin{equation}
\label{3I-100}
\sum f_i (\lambda) (\mu_{i} - \lambda_i)-(q-p)
    \geq \theta_K + \varepsilon \sum f_i (\lambda),
\;\; \forall \, \hat\mu \in K.
\end{equation}
Furthermore, for any closed interval $[c,d]$,  $\theta_K$ and $R_K$ can be chosen
so that \eqref{3I-100} holds uniformly in $\sigma\in [c,d]$.
\end{theorem}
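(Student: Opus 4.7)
The plan is to lift the pointwise estimate of Theorem~\ref{T2.16} to a uniform one over the compact set $K$, then to extend uniformity to $\sigma \in [c,d]$ by continuity in $\sigma$. First I would observe that the hypothesis on $K$ implies $K \subset \mathcal{B}^+_\sigma$. Since $\mathcal{B}^+_\sigma$ is open by Lemma~\ref{L2.8} and $K$ is compact, $d_K := \mbox{dist}(K, \partial \mathcal{B}^+_\sigma) > 0$. Set $\varepsilon = d_K/2$ (shrinking if necessary to meet later needs), so that the shifted set $K^\varepsilon := \{\hat\mu - \varepsilon \hat{\bf 1} : \hat\mu \in K\}$ is still a compact subset of $\mathcal{B}^+_\sigma$. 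The essential reduction is the identity
\[
\sum f_i(\lambda)(\mu_i - \lambda_i) - (q-p)
   = \varepsilon \sum f_i(\lambda)
   + \Big[\sum f_i(\lambda)(\mu^\varepsilon_i - \lambda_i) - (q-p)\Big],
\]
so it suffices to bound the bracketed term below by a uniform $\theta_K > 0$ for $|\lambda|$ large and all $\hat\mu \in K$.

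Next I would split $K^\varepsilon$ according to whether $\hat\mu^\varepsilon$ lies in $\overline{\Sigma^\sigma}$. On the compact piece $K_1^\varepsilon := K^\varepsilon \cap \overline{\Sigma^\sigma}$, Lemmas~\ref{L2.11} and \ref{L2.12} produce $\Theta_{R_0}(\hat\mu^\varepsilon) > 0$ pointwise; by the continuous dependence of $\Theta_R$ on $\hat\mu^\varepsilon$ and the compactness of $K_1^\varepsilon$, I would choose a single $R_K$ large enough that $\theta_1 := \inf_{K_1^\varepsilon} \Theta_{R_K} > 0$, and Lemma~\ref{L2.14} then gives the bracketed term $\geq \theta_1$ whenever $|\lambda| \geq R_K$ and $|p - q| \leq \delta + \eta$. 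On $K_2^\varepsilon := K^\varepsilon \setminus \overline{\Sigma^\sigma}$, I would use the segment argument from the proof of Theorem~\ref{T2.16}: for each $\hat\lambda \in \partial \Sigma^\sigma$ with $|p-q| \leq \delta + \eta$, the open segment from $\hat\mu^\varepsilon$ to $\hat\lambda$ crosses $\hat S^\sigma_{\hat\mu^\varepsilon}$ at some $\hat\lambda'$, and concavity of $\tilde f$ reduces matters to a uniform estimate on the auxiliary set $K' := \bigcup_{\hat\mu^\varepsilon \in K_2^\varepsilon} \hat S^\sigma_{\hat\mu^\varepsilon} \cap \Gamma \times [a - \eta, b + \eta]$. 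Lemma~\ref{L2.9} ensures $K'$ is compact, and Lemma~\ref{L2.15} applied to $K'$ yields a uniform $\theta_2 > 0$. Setting $\theta_K := \min(\theta_1, \theta_2)$ and enlarging $R_K$ if necessary completes the estimate for fixed $\sigma$.

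For the uniformity in $\sigma \in [c,d]$, I would note that $\partial \Sigma^\sigma$ depends smoothly on $\sigma$ (indeed by a translation in the $p$-variable), so $d_K(\sigma) := \mbox{dist}(K, \partial \mathcal{B}^+_\sigma)$ depends continuously on $\sigma$ and attains a positive minimum on $[c,d]$. The compact auxiliary sets produced by Lemma~\ref{L2.9} likewise vary continuously in $\sigma$, so their union over $\sigma \in [c,d]$ remains compact. Running the argument above with these uniform-in-$\sigma$ quantities gives uniform constants $\varepsilon, \theta_K, R_K$.

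The main obstacle I expect is organizing the case split inside $K^\varepsilon$ uniformly, in particular on $K_2^\varepsilon$ where the auxiliary set $\hat S^\sigma_{\hat\mu^\varepsilon}$ itself depends on $\hat\mu^\varepsilon$. The correct move is to apply Lemma~\ref{L2.9} once to the whole of $K_2^\varepsilon$ to extract a single compact auxiliary set and only then invoke Lemma~\ref{L2.15}, rather than doing this pointwise and attempting to patch the resulting constants afterwards.
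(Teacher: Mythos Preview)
Your proposal is correct and follows essentially the paper's approach: shift by $\varepsilon$, split $K$ according to position relative to $\overline{\Sigma^\sigma}$, and on the exterior piece collect the sets $\hat S^\sigma_{\cdot}$ into a single compact auxiliary set via Lemma~\ref{L2.9} before applying Lemma~\ref{L2.15}, then pass to uniformity in $\sigma$ by continuity of the foliation $\{\partial\Sigma^\sigma\}$. The paper's one refinement is to split at the $3\varepsilon/2$-shift rather than the $\varepsilon$-shift; this buffer guarantees $\hat\mu^{\varepsilon}\in\Sigma^\sigma$ strictly on $\overline{K_1}$, so the interior piece is handled by the single concavity line $\sum \tilde f_i(\hat\lambda)(\hat\mu_i^{\varepsilon}-\hat\lambda_i)\geq \tilde f(\hat\mu^{\varepsilon})-\sigma>0$, bypassing your detour through $\Theta_R$ and Lemmas~\ref{L2.11}--\ref{L2.14}.
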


\begin{proof}
Let $K_1 = \{\hat\mu \in K: \hat \mu^{3 \varepsilon/2} \in \Sigma^{\sigma}\}$, 
$K_2 = \{\hat\mu \in K: \hat\mu^{3 \varepsilon/2} \in V^{\sigma}\}$
and
\[ W := \cup_{\hat \mu \in K_2} 
           \hat S^{\sigma}_{\hat \mu^{3 \varepsilon/2}}. \]
By the concavity of $\tilde{f}$ and compactness on $\ol{K_1}$  we have
\begin{equation}
\label{3I-100a}
\begin{aligned}
\sum \tilde f_i (\hat\lambda)  
             (\hat\mu^{\varepsilon}_i - \hat\lambda_i) 
  \geq \,& \tilde f (\hat\mu^{\varepsilon}) - \tilde f (\hat\lambda) \\
  \geq \,& \min_{\hat\zeta \in \ol{K_1}}  
     \tilde f (\hat \zeta^{\varepsilon})  - \sigma> 0, \;\; \forall \, \hat\mu \in K_1, \; \hat\lambda \in \partial \Sigma^{\sigma}.
\end{aligned}
\end{equation}

Next, 
by Lemma~\ref{L2.9},
\[ R_0 := \sup_{(\zeta,r) \in W \cap \Gamma \times [a,b]} |\zeta| < \infty. \]
So $\bar{W}\cap\Gamma\times[a,b]$ is a compact subset of
$\mathcal{B}_{\sigma}^+ \cap \partial \Sigma^{\sigma}$.
Applying Lemma~\ref{L2.15} to $\bar{W}$, we obtain for any $\hat\lambda \in \partial \Sigma^{\sigma}\cap\Gamma\times[a,b]$
with $|\lambda| \geq 2 R_0$,
\begin{equation}
\label{3I-100b}
\begin{aligned}
\sum \tilde f_i (\hat\lambda)  
             (\hat\mu^{\varepsilon}_i - \hat\lambda_i) 
  \geq \, & \min_{\hat \zeta \in \bar{W}\cap\Gamma\times[a,b]}
           \sum \tilde f_i (\hat \lambda) (\hat \zeta_i - \lambda_i) 
    \geq \Theta_{\bar{W}},
\;\; \forall \, \hat\mu \in K_2
\end{aligned}
\end{equation}
since the segment $(\hat \mu^{3 \varepsilon/2}, \hat\lambda)$ must intersect
$W\cap\Gamma\times[a,b]$.
Now \eqref{3I-100} follows from \eqref{3I-100a} and \eqref{3I-100b}.

Finally, we note that $\theta_K$ and $R_K$ can be chosen so that they
continuously depends on $\sigma$. This can be seen from the fact that the hypersurface
$\{\partial \Sigma^{\sigma}:\sigma\in [c,d]\}$ form a smooth foliation of the region bounded
by $\partial \Sigma^{c}$ and $\partial \Sigma^{d}$ in $\Gamma\times \bfR$,
which also implies that the distant function dist$(\hat\mu, \partial \mathcal{B}^+_{\sigma})$ also depends continuously on $\sigma$.
\end{proof}

\bigskip

\end{document}